\documentclass[a4paper, 10pt, american]{amsart}

\usepackage[colorlinks,pdfpagelabels,pdfstartview = FitH,bookmarksopen
= true,bookmarksnumbered = true,linkcolor = blue,plainpages =
false,hypertexnames = false,citecolor = red,pagebackref=false]{hyperref}
\usepackage{times,latexsym,amssymb}
\usepackage{amsmath,amsthm,bm}
\usepackage{graphics, epsfig}
\usepackage{color}
\usepackage{appendix}
\usepackage[margin=1.5in]{geometry}

\setcounter{tocdepth}{1} 

\usepackage{amsbsy}
\usepackage{amstext}
\usepackage{amssymb}
\usepackage{esint}
\usepackage{stmaryrd}
\allowdisplaybreaks

\let\TeXchi\chi
\newbox\chibox
\setbox0 \hbox{\mathsurround0pt $\TeXchi$}
\setbox\chibox \hbox{\raise\dp0 \box 0 }
\def\chi{\copy\chibox}

\renewcommand{\d}{\mathrm{d}}
\newcommand{\dx}{\mathrm{d}x}

\newcommand{\dt}{\mathrm{d}t}
\newcommand{\ds}{\mathrm{d}s}

\renewcommand{\epsilon}{\varepsilon}
\renewcommand{\rho}{\varrho}

\DeclareFontFamily{U}{mathx}{\hyphenchar\font45}
\DeclareFontShape{U}{mathx}{m}{n}{
      <5> <6> <7> <8> <9> <10>
      <10.95> <12> <14.4> <17.28> <20.74> <24.88>
      mathx10
      }{}
\DeclareSymbolFont{mathx}{U}{mathx}{m}{n}
\DeclareFontSubstitution{U}{mathx}{m}{n}
\DeclareMathAccent{\widecheck}{0}{mathx}{"71}
\DeclareMathAccent{\wideparen}{0}{mathx}{"75}

\author[V. B\"ogelein]{Verena B\"{o}gelein}
\address{Verena B\"ogelein\\
Fachbereich Mathematik, Universit\"at Salzburg\\
Hellbrunner Str. 34, 5020 Salzburg, Austria}
\email{verena.boegelein@sbg.ac.at}

\author[F. Duzaar]{Frank Duzaar}
\address{Frank Duzaar\\
Department Mathematik, Universit\"at Erlangen--N\"urnberg\\
Cauerstrasse 11, 91058 Erlangen, Germany}
\email{frank.duzaar@fau.de}

\author[N. Liao]{Naian Liao}
\address{Naian Liao\\
Fachbereich Mathematik, Universit\"at Salzburg\\
Hellbrunner Str. 34, 5020 Salz\-burg, Austria}
\email{naian.liao@sbg.ac.at}

\author[L. Sch\"atzler]{Leah Sch\"atzler}
\address{Leah Sch\"atzler\\
Fachbereich Mathematik, Universit\"at Salzburg\\
Hellbrunner Str. 34, 5020 Salzburg, Austria}
\email{leahanna.schaetzler@sbg.ac.at}

\keywords{Doubly nonlinear parabolic equations, signed solutions, intrinsic scaling,
expansion of positivity, H\"older continuity}

  
\subjclass[2010]{35K65, 35K67, 35B65}

\begin{document}
\newtheorem{proposition}{Proposition}[section]
\newtheorem{theorem}{Theorem}[section]
\newtheorem{lemma}{Lemma}[section]
\newtheorem{corollary}{Corollary}[section]
\newtheorem{remark}{Remark}[section]
\renewcommand{\thesection}{\arabic{section}}
\renewcommand{\theequation}{\thesection.\arabic{equation}}
\renewcommand{\thetheorem}{\thesection.\arabic{theorem}}
\numberwithin{equation}{section}
\numberwithin{theorem}{section}
\numberwithin{proposition}{section}
\numberwithin{lemma}{section}
\numberwithin{remark}{section}
\setcounter{secnumdepth}{3}
\newcommand{\cl}{\centerline}
\newcommand{\sms}{\smallskip}
\newcommand{\ms}{\medskip}
\newcommand{\bs}{\bigskip}
\newcommand{\noi}{\noindent}
\newcommand{\itl}[1]{\textit{#1}}
\newcommand{\blf}[1]{\textbf{#1}}
\newcommand{\dsty}{\displaystyle}
\newcommand{\txty}{\textstyle}
\newcommand{\ssty}{\scriptstyle}
\newcommand{\tty}{\texttt}


\newcommand\Par{\mathhexbox278\,}


\newcommand{\al}{\alpha}
\newcommand{\Al}{\Alpha}
\newcommand{\be}{\beta}
\newcommand{\Be}{\Beta}
\newcommand{\Gm}{\Gamma}
\newcommand{\gm}{\gamma}
\newcommand{\dl}{\delta}
\newcommand{\Dl}{\Delta}
\newcommand{\lm}{\lambda}
\newcommand{\Lm}{\Lambda}
\newcommand{\kp}{\kappa}
\newcommand{\varep}{\varepsilon}
\newcommand{\eps}{\epsilon}
\newcommand{\vp}{\varphi}
\newcommand{\sig}{\sigma}
\newcommand{\Sig}{\Sigma}
\newcommand{\om}{\omega}
\newcommand{\Om}{\Omega}
\newcommand{\uom}{\mbox{\boldmath$\omega$}}
\newcommand{\btau}{\mbox{\boldmath$\tau$}}
\newcommand{\bnu}{\mbox{\boldmath$\nu$}}
\newcommand{\up}{\upsilon}
\newcommand{\z}{\zeta}


\newcommand{\df}[1]{\buildrel\mbox{\small def}\over{#1}}
\newcommand{\op}[1]{\buildrel\mbox{\tiny o}\over{#1}}
\newcommand{\db}{\prime\prime}
\newcommand{\bsl}{\backslash}
\newcommand{\lb}{\lbrack\!\lbrack}
\newcommand{\rb}{\rbrack\!\rbrack}
\newcommand\la{\langle}
\newcommand\ra{\rangle}
\newcommand{\ev}{\equiv}
\newcommand{\nev}{\not\equiv}
\newcommand{\nn}{\mathbb{N}}
\newcommand{\qq}{\mathbb{Q}}
\newcommand{\zz}{\mathbb{Z}}
\newcommand{\rr}{\mathbb{R}}
\newcommand{\rn}{\rr^N}
\newcommand{\cc}{\mathbb{C}}
\newcommand{\id}{\mathbb{I}}
\newcommand{\bo}{\mathbb{O}}

\newcommand{\amsb}[1]{\mathbb{#1}}
\newcommand{\mcl}[1]{\mathcal{#1}}
\newcommand{\bl}[1]{\mathbf{#1}}
\newcommand{\ov}[1]{\overline{#1}}
\newcommand{\wt}[1]{\widetilde{#1}}
\newcommand{\wh}[1]{\widehat{#1}}

\newcommand{\llra}{\leftrightarrow}
\newcommand{\lra}{\longrightarrow}
\newcommand{\LLR}{\Longleftrightarrow}
\newcommand{\LRA}{\Longrightarrow}
\newcommand{\LLA}{\Longleftarrow}


\newcommand{\bbox}{\vrule height.6em width.6em 
depth0em} 
\newcommand{\os}{\vbox{\hrule \hbox{\vrule 
height.6em depth0pt 
\hskip.6em \vrule height.6em depth0em}
\hrule}} 


\newcommand{\dvg}{\operatorname{div}}
\newcommand{\curl}{\operatorname{curl}}
\newcommand{\supp}{\operatorname{supp}}
\newcommand{\essup}{\operatornamewithlimits{ess\,sup}}
\newcommand{\essinf}{\operatornamewithlimits{ess\,inf}}
\newcommand{\essosc}{\operatornamewithlimits{ess\,osc}}
\newcommand{\osc}{\operatornamewithlimits{osc}}
\newcommand{\sign}{\operatorname{sign}}
\newcommand{\loc}{\operatorname{loc}}
\newcommand{\diam}{\operatorname{diam}}
\newcommand{\dist}{\operatorname{dist}}
\newcommand{\card}{\operatorname{card}}
\newcommand{\meas}{\operatorname{meas}}
\newcommand{\spn}{\operatorname{span}}
\newcommand{\dtm}{\operatorname{det}}
%


\newcommand{\overlim}{\mathop{\overline{\lim}}\limits}
\newcommand{\underlim}{\mathop{\underline{\lim}}\limits}
\newcommand{\ttop}[2]{\genfrac{}{}{0pt}{}{#1}{#2}}
\newcommand{\bcu}{\mathop{\txty{\bigcup}}\limits}
\newcommand{\bca}{\mathop{\txty{\bigcap}}\limits}
\newcommand{\bsu}{\mathop{\txty{\sum}}\limits}
\newcommand{\pro}{\mathop{\txty{\prod}}\limits}


\newcommand{\pl}{\partial}
\newcommand{\ptt}{\frac{\pl}{\pl t}}
\newcommand{\ppx}{\frac\pl{\pl x}}
\newcommand{\dds}{\frac d{ds}}
\newcommand{\ddt}{\frac d{dt}}

\newcommand{\intl}{\int\limits}
\newcommand{\iintl}{\iint\limits}
\def\Xint#1{\mathchoice
    {\XXint\displaystyle\textstyle{#1}}%
    {\XXint\textstyle\scriptstyle{#1}}%
    {\XXint\scriptstyle\scriptscriptstyle{#1}}%
    {\XXint\scriptscriptstyle\scriptscriptstyle{#1}}%
    \!\int}
\def\XXint#1#2#3{\setbox0=\hbox{$#1{#2#3}{\int}$}
    \vcenter{\hbox{$#2#3$}}\kern-0.5\wd0}
\def\bint{\Xint-}
\def\dashint{\Xint{\raise4pt\hbox to7pt{\hrulefill}}}
\def\dashiint{\bint\kern-0.15cm\bint}

\newcommand{\ovl}[3]{\int_{#1}^{#2}\kern-#3pt\raise4pt\hbox to7pt{\hrulefill}\ }

\newcommand{\ovll}[3]{\intl_{#1}^{#2}\kern-#3pt\raise4pt\hbox to7pt{\hrulefill}\ }

\newcommand{\tvl}[2]{\iint_{#1}\kern-#2pt\raise4pt\hbox to7pt{\hrulefill}\ }



\newcommand{\omt}{\Om_T}
\newcommand{\plo}{\partial\Omega}
\newcommand{\ovo}{\bar{\Om} }

%
\newcommand{\ci}[1]{C^\infty\!\left({#1}\right)}
\newcommand{\cio}[1]{C_o^\infty\!\left({#1}\right)}
\newcommand{\lloc}[1]{L_{\loc}\!\left({#1}\right)}
\newcommand{\xy}{|x-y|}


\newcommand{\intom}{\intl_{\Om}}
\newcommand{\intbo}{\intl_{\plo}}
\newcommand{\inom}{\int_{\Om}}
\newcommand{\inbo}{\int_{\plo}}
\newcommand{\intrn}{\intl_{\rn}}


\newcommand{\bye}{
\title[H\"older regularity for a doubly nonlinear equation]{On the H\"older regularity of signed solutions to a doubly nonlinear equation. part II}

\date{}
\begin{abstract}
We demonstrate two proofs for the local H\"older continuity of possibly sign-changing solutions to
a class of doubly nonlinear parabolic equations whose prototype is
\[
\partial_t\big(|u|^{q-1}u\big)-\Dl_p u=0,\quad p>2,\quad 0<q<p-1.
\]
The first proof takes advantage of the expansion of positivity for the degenerate, parabolic $p$-Laplacian,
 thus simplifying the argument; whereas the other proof
relies solely on the energy estimates for the doubly nonlinear parabolic equations.
After proper adaptions of the interior arguments, we also obtain the boundary regularity for
 initial-boundary value problems of Dirichlet type and Neumann type.
\vskip.2truecm
\end{abstract}
\maketitle

\tableofcontents

\section{Introduction and Main Results}
Initiated in \cite{BDL}, we continue our study on the H\"older regularity of weak solutions to
a class of doubly nonlinear parabolic equations whose model case is
\begin{equation}\label{prototype}
	\partial_t\big(|u|^{q-1}u\big)- \dvg \big(|Du|^{p-2}Du\big) =0\quad\mbox{ weakly in $ E_T$.}
\end{equation}
Here $E_T:=E\times(0,T]$ for some $T>0$  and some $E$ open in $\rn$. 
In \cite{BDL} we have studied the borderline case, i.e., $p>1$ and $q=p-1$, 
and in this note we will take on the doubly degenerate case, i.e., $p>2$ and $0<q<p-1$.

Our main result states that locally bounded, weak solutions to \eqref{prototype} are H\"older continuous
in the interior, and up to the parabolic boundary of $E_T$, if proper assumptions on the boundary 
are imposed. Two proofs will be presented, both of which are entirely local and structural.

As a matter of fact, we shall consider  parabolic partial differential equations of the general form
\begin{equation}  \label{Eq:1:1}
	\partial_t\big(|u|^{q-1}u\big)-\dvg\bl{A}(x,t,u, Du) = 0\quad \mbox{ weakly in $ E_T$}
\end{equation}
where $\bl{A}(x,t,u,\z)\colon E_T\times\rr^{N+1}\to\rn$ is a Carath\'eodory function.
Namely, it is 
measurable with respect to $(x, t) \in E_T$ for all $(u,\z)\in \rr\times\rn$,
and continuous with respect to $(u,\z)$ for a.e.~$(x,t)\in E_T$.
Moreover, we assume  the structure conditions
\begin{equation}\label{Eq:1:2p}
	\left\{
	\begin{array}{c}
		\bl{A}(x,t,u,\z)\cdot \z\ge C_o|\z|^p, \\[5pt]
		|\bl{A}(x,t,u,\z)|\le C_1|\z|^{p-1},%
	\end{array}
	\right .
	\quad \mbox{for a.e.~$(x,t)\in E_T$, $\forall\,u\in\rr$, $\forall\,\z\in\rn$,}
\end{equation}
where $C_o$ and $C_1$ are given positive constants. 



In the sequel, we will refer to the set of parameters
$\{p,\,q,\,N,\,C_o,\,C_1\}$ as  the {\it structural data}.
We also write $\boldsymbol \gm$ as a generic positive constant that can be quantitatively
determined a priori only in terms of the data and that can change from line to line.

Postponing the formal definitions of weak solution,
we will proceed to present the main results on the interior regularity in Section~\ref{S:interior}
and the boundary regularity in Section~\ref{S:boundary}.

\subsection{Interior Regularity}\label{S:interior}
Suppose that $u\in L^{\infty}(E_T)$ and set $M:=\|u\|_{\infty, E_T}$.
Let $\Gm:=\pl E_T-\overline{E}\times\{T\}$
be the parabolic boundary of $E_T$. For a compact set $\mathcal{K}\subset E_T$
we introduce the following intrinsic, parabolic distance from $\mathcal{K}$ to $\Gm$ by
\begin{equation*}
	\begin{aligned}
		\dist_p(\mathcal{K};\,\Gm)&:=\inf_{\substack{(x,t)\in \mathcal{K}\\(y,s)\in\Gm}}
		\left\{|x-y|+M^{\frac{p-q-1}p}|t-s|^{\frac1p}\right\}.
	\end{aligned}
\end{equation*}
Now we state our main result concerning the interior H\"older continuity of weak solutions
to \eqref{Eq:1:1}, subject to the structure conditions \eqref{Eq:1:2p}.
Throughout this note, we assume that $p>2$ and $0<q<p-1$ unless otherwise stated.
\begin{theorem}\label{Thm:1:1}
	Let $u$ be a bounded, local, weak solution to \eqref{Eq:1:1} -- \eqref{Eq:1:2p} in $E_T$.
	Then $u$ is locally H\"older continuous in $E_T$. More precisely,
	there exist constants $\boldsymbol\gm>1$ and $\be\in(0,1)$ that can be determined a priori
	only in terms of the data, such that for every compact set $\mathcal{K}\subset E_T$,
	\begin{equation*}
	\big|u(x_1,t_1)-u(x_2,t_2)\big|
	\le
	\boldsymbol \gm M   
	\left(\frac{|x_1-x_2|+M^{\frac{p-q-1}p}|t_1-t_2|^{\frac1p}}{\dist_p(\mathcal{K};\Gm)}\right)^{\be},
	\end{equation*}
for every pair of points $(x_1,t_1), (x_2,t_2)\in \mathcal{K}$.
\end{theorem}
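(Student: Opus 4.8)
The plan is to reduce Theorem~\ref{Thm:1:1} to a quantitative decay of oscillation along a nested family of intrinsically scaled cylinders, and then to obtain that decay through a case distinction that isolates the genuinely doubly nonlinear difficulty near the zero set of $u$.

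\emph{Reduction to oscillation decay.} Fix $(x_o,t_o)\in E_T$ and, for $\rho>0$, consider the backward cylinders $Q_\rho:=K_\rho(x_o)\times(t_o-\theta\rho^p,t_o]$ with $\theta$ comparable to $M^{q+1-p}$, so that the parabolic size of $Q_\rho$ is measured precisely by $\dist_p$. It suffices to produce constants $\sigma,\lambda\in(0,1)$, depending only on the data, together with a nested sequence of such cylinders $Q_{\rho_n}$, $\rho_n=\sigma^n\rho_o$, along which $\operatorname{osc}_{Q_{\rho_n}}u\le\lambda\,\operatorname{osc}_{Q_{\rho_{n-1}}}u$. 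A standard iteration and a covering argument in the intrinsic parabolic metric $(x,t)\mapsto|x|+M^{\frac{p-q-1}p}|t|^{\frac1p}$ then upgrade this to the stated estimate, with $\be=\log\lambda/\log\sigma$ and $\boldsymbol\gm$ determined by the data; this part is routine once the decay is available.

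\emph{The dichotomy and the fixed-sign regime.} On a fixed cylinder $Q$ write $\mu^+:=\sup_Q u$, $\mu^-:=\inf_Q u$, $\omega:=\mu^+-\mu^-$. If $\inf_Q|u|\ge\omega$, an elementary argument shows that $u$ keeps a fixed sign on $Q$ and that $|u|$ is there comparable to the constant $\inf_Q|u|$. Then by the chain rule $\partial_t(|u|^{q-1}u)=q|u|^{q-1}\partial_t u$ with $|u|^{q-1}$ comparable to a constant, so after a time rescaling equation~\eqref{Eq:1:1} with structure~\eqref{Eq:1:2p} turns into a parabolic $p$-Laplacian-type equation with bounded measurable coefficients, in the correct intrinsic geometry. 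The reduction of oscillation is then inherited from the regularity theory of the degenerate parabolic $p$-Laplacian: in the first proof one imports the expansion of positivity of DiBenedetto--Gianazza--Vespri directly, in the second one reproves the required oscillation decay from the energy estimates. No essentially new phenomenon occurs in this regime.

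\emph{The near-zero regime.} In the complementary case $\inf_Q|u|<\omega$ one has $\sup_Q|u|<2\omega$, the chain rule is unavailable, and the argument must be carried out directly for~\eqref{Eq:1:1}. Starting from the Caccioppoli and logarithmic energy inequalities for the doubly nonlinear equation --- in which the term $\partial_t(|u|^{q-1}u)$ is handled through the Boole-type primitive of $s\mapsto|s|^{q-1}s$ --- one runs the De~Giorgi alternative on a subcylinder: either the set $\{u<\mu^+-\tfrac14\omega\}$ fills a definite fraction of $Q$, in which case a De~Giorgi iteration on the truncations $(u-k)_+$ with $k\uparrow\mu^+$, together with a measure-shrinking lemma propagating this information forward in time, forces $\sup u\le\mu^+-\eta\omega$ on a smaller intrinsic subcylinder for some $\eta\in(0,1)$ depending only on the data; or else $\{u>\mu^-+\tfrac14\omega\}$ does, and the symmetric argument applies to $(u-k)_-$. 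Either way the oscillation is reduced by a fixed factor. The hypotheses $p>2$ and $0<q<p-1$ enter precisely here, to make the time contributions scale subcritically against the $p$-energy, exactly as in \cite{BDL}.

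\emph{Main obstacle.} The heart of the matter is the near-zero regime: since $s\mapsto|s|^{q-1}s$ is only H\"older, not Lipschitz, at the origin and solutions may change sign, the natural energy variable is $|u|^{q-1}u$ rather than $u$, and the doubly nonlinear analogues of De~Giorgi's second lemma and of the expansion of positivity --- transferring measure information from one time slice to later ones --- demand delicate estimates and are where the restriction $q<p-1$ is used. A secondary bookkeeping point is to reconcile the two intrinsic geometries arising in the two regimes so that the iteration proceeds with a single pair $(\sigma,\lambda)$ and hence a single exponent $\be$.
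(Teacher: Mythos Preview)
Your high-level architecture --- dichotomy between a ``fixed-sign'' regime that reduces to the degenerate parabolic $p$-Laplacian and a ``near-zero'' regime treated directly by De~Giorgi techniques --- matches the paper. But several points in the near-zero regime are either inaccurate or glossed over in a way that hides the actual work.

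First, you invoke ``logarithmic energy inequalities''. The paper explicitly dispenses with any logarithmic estimate (this is stated as one of its main technical advances in Section~\ref{S:NS}); everything rests solely on the Caccioppoli-type inequalities of Proposition~\ref{Prop:2:1} together with the function $\mathfrak g_\pm$. If your argument genuinely needs a logarithmic lemma you are on a different track, and you should say what it buys you; if not, drop the reference.

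Second, your description of the near-zero alternative is not what happens. The two branches are \emph{not} symmetric in $(u-k)_+$ and $(u-k)_-$. In the paper one fixes a ``bottom'' subcylinder and tests whether $\{u\le\boldsymbol\mu^-+\tfrac14\boldsymbol\omega\}$ is small or large there. If small, the De~Giorgi lemma (Lemma~\ref{Lm:DG:1}) gives a pointwise lower bound at an early time, which must then be \emph{propagated forward} to the top of the intrinsic cylinder; this propagation splits further according to whether $|\boldsymbol\mu^-|\le\xi\boldsymbol\omega$ (use Lemma~\ref{Lm:DG:initial:1}) or $-2\boldsymbol\omega<\boldsymbol\mu^-<-\xi\boldsymbol\omega$ (transform and use Lemma~\ref{Lm:DG:initial:2}), and the free parameter $A$ in the cylinder height is chosen precisely so that the propagation reaches $t_o$. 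If large, one switches sides to $\boldsymbol\mu^+-u$ and either applies the expansion of positivity (Lemma~\ref{Lm:expansion:p}, first proof) or the three-step chain of Lemmas~\ref{Lm:4:2}--\ref{Lm:5:3} (second proof). Your phrase ``a measure-shrinking lemma propagating this information forward in time'' is exactly the crux you are skipping; this is where the constants $\xi$, $\eta_o$, $\eta_1$, and $A$ are determined and where $p>2$ and $q<p-1$ are used (to make $(\xi\boldsymbol\omega)^{q+1-p}$ and $(\eta\boldsymbol\omega)^{2-p}$ large when $\xi,\eta$ are small).

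Two smaller points. The claim that $s\mapsto|s|^{q-1}s$ is ``only H\"older, not Lipschitz, at the origin'' is false for $q\ge 1$, which is allowed here since $p>2$; the real issue is the mismatch between the $q$-power in time and the $p$-power in space, not a lack of smoothness of the nonlinearity. And the ``natural energy variable'' is not $|u|^{q-1}u$; one works with the truncations $(u-k)_\pm$ and absorbs the $q$-power via $\mathfrak g_\pm(u,k)$.
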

\begin{remark}\label{Rmk:1:1}\upshape
Local boundedness is sufficient for Theorem~\ref{Thm:1:1} to hold.
In fact, local boundedness is inherent in the notion of weak solutions, cf.~Appendix~\ref{Append:1}.
 Moreover, the method also applies to equations with lower order terms like in \cite[Chapters~II -- IV]{DB}
 and in \cite[Appendix~C]{DBGV-mono}.
However we will not pursue generality in this direction. Instead, concentration will be made on
the actual novelty.
\end{remark}
\begin{remark}\upshape
Theorem~\ref{Thm:1:1} implies a Liouville type theorem; the argument is the same as \cite[Corollary~1.1]{BDL} 
which we refer to for details.
\end{remark}

\subsection{Boundary Regularity}\label{S:boundary}
Results on the boundary regularity will be stated in this section.
Let us first consider the following initial-boundary value problem of Dirichlet type:
\begin{equation}\label{Dirichlet}
\left\{
\begin{array}{c}
	\partial_t\big(|u|^{q-1}u\big)-\dvg\bl{A}(x,t,u, Du) = 0\quad \mbox{weakly in $ E_T$,}\\[7pt]
	u(\cdot,t)\Big|_{\partial E}=g(\cdot,t)\Big|_{\partial E}\quad \mbox{for a.e.~$ t\in(0,T]$,}\\[7pt]
	u(\cdot,0)=u_o(\cdot),
\end{array}
\right.
\end{equation}
where the structure conditions \eqref{Eq:1:2p} are in force. 
Concerning the Dirichlet datum $g$ at the lateral boundary $S_T:=\pl E\times(0,T]$ and the initial datum $u_o$ we assume
\begin{align}
\tag{\bf{I}}\label{I} & \mbox{$u_o$ is continuous in $\overline{E}$ with modulus of continuity $\om_{o}(\cdot)$;}\\
\tag{\bf D}\label{D} & \mbox{$\dsty g\in L^p\big(0,T;W^{1,p}( E)\big)$, and $g$ is continuous on $S_T$ with modulus of continuity
  				$\om_g(\cdot)$.} 
\end{align}
As for the geometry of the boundary $\pl E$, we introduce the property of {\it positive geometric density}
\begin{equation}\tag{\bf G}\label{geometry}
\left\{\;\;
	\begin{minipage}[c][1.5cm]{0.7\textwidth}
	there exists $\al_*\in(0,1)$ and $\rho_o>0$, such that for all $x_o\in\pl E$,
	for every cube $K_\rho(x_o)$ and $0<\rho\le\rho_o$, there holds
	$$
	|E\cap K_{\rho}(x_o)|\le(1-\al_*)|K_\rho|.
	$$
	\end{minipage}
\right.
\end{equation}
Here for $\varrho>0$ we have set $K_\varrho(x_o)$ to be the cube with center at $x_o\in\rn$
and edge $2\varrho$, whose faces are parallel with the coordinate planes. When $x_o=0$ we simply write $K_\varrho$.
Intuitively, condition \eqref{geometry} means that there is an exterior cone whose vertex is attached to $x_o$
and whose angle is quantified by $\al_*$.

Next, we consider the Neumann problem. 
The boundary $\pl E$ is assumed to be of class $C^1$, 
such that the outward unit normal, which we denote by {\bf n},
is defined on $\pl E$.
The initial-boundary value problem of Neumann type is formulated as
\begin{equation}\label{Neumann}
\left\{
\begin{array}{c}
	\partial_t\big(|u|^{q-1}u\big)-\dvg\bl{A}(x,t,u, Du) = 0\quad \mbox{weakly in $ E_T$,}\\[5pt]
	\bl{A}(x,t,u, Du)\cdot {\bf n}=\psi(x,t, u)\quad \mbox{on $S_T$,}\\[5pt]
	u(\cdot,0)=u_o(\cdot),
\end{array}
\right.
\end{equation}
where the structure conditions \eqref{Eq:1:2p} and assumption \eqref{I} for the initial data are still in force. 
For the Neumann datum $\psi$ we assume for simplicity that, for some absolute constant $C_2$,
there holds
\begin{equation}\label{N-data}\tag{\bf{N}}
|\psi(x,t, u)|\le C_2\quad \text{ for a.e. }(x,t, u)\in S_T\times\rr.
\end{equation}
Although more general conditions should also work (cf.~ \cite[Section~2, Chapter~II]{DB}),
we however will not pursue generality in this direction.

The formal definitions of weak solutions to \eqref{Dirichlet} and \eqref{Neumann} will be given in Section~\ref{S:1:2}.
Now we are ready to present the results concerning regularity of solutions to \eqref{Dirichlet}
or \eqref{Neumann} up to the parabolic boundary $\Gm$. Recall also that we have set $M:=\|u\|_{\infty, E_T}$.
\subsubsection{Near the Initial Time}
\begin{theorem}\label{Thm:1:2}
Let $u$ be a bounded weak solution to
the Dirichlet problem 
\eqref{Dirichlet} under the assumption \eqref{Eq:1:2p}.
Assume \eqref{I} holds. Then $u$ is continuous in $K\times[0,T]$ for any compact set $K\subset E$.
More precisely, there is a modulus of continuity $\boldsymbol\om(\cdot)$,
determined by the data, $\dist(K,\pl E)$, $M$ and $\boldsymbol\om_{o}(\cdot)$, such that
	\begin{equation*}
	\big|u(x_1,t_1)-u(x_2,t_2)\big|
	\le
	\boldsymbol\om\Big(|x_1-x_2|+M^{\frac{p-q-1}p}|t_1-t_2|^{\frac1p}\Big),
	\end{equation*}
for every pair of points $(x_1,t_1), (x_2,t_2)\in K\times[0,T]$.
In particular, if $u_o$ is H\"older continuous with exponent $\be_{o}$,
then $\boldsymbol\om(r)=\boldsymbol\gm M r^{\be}$ with some $\boldsymbol\gm>0$ and 
$\be\in(0,\be_{o}]$ depending on the data, $\dist(K,\pl E)$ and $\be_{o}$.
\end{theorem}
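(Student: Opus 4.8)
The plan is to combine the interior De Giorgi–type machinery developed for Theorem~\ref{Thm:1:1} with the quantitative information carried by the initial datum $u_o$. Fix a compact set $K\subset E$, let $x_o\in K$ and consider a point $(x_o,0)$ on the initial hyperplane. Because we are near the bottom of the cylinder, the backward-in-time intrinsic cylinders used in the interior proof do not fit; instead I would work with forward cylinders $(x_o,0)+Q_\rho^+$ of the form $K_\rho(x_o)\times(0,\theta\rho^p]$, with the time-scaling factor $\theta$ chosen intrinsically in terms of the oscillation, exactly as in the interior argument but anchored at $t=0$. The key structural input is that all the energy (Caccioppoli) estimates and the logarithmic estimates underlying the interior proof are already valid on such cylinders, provided one carries along the boundary term at $t=0$; since $u(\cdot,0)=u_o$ and $u_o$ is continuous with modulus $\om_o$, that boundary term is controlled: on $K_\rho(x_o)$ the datum $|u|^{q-1}u$ oscillates by at most a quantity governed by $\om_o(\rho)$.

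The heart of the matter is then an oscillation-decay iteration starting from the initial slice. First I would show an \emph{initial reduction of oscillation}: if $\operatorname*{osc}$ of $u$ over $(x_o,0)+Q_{\rho}^+$ is $\omega$, then either $\omega$ is already dominated by (a multiple of) $\om_o(\rho)$ — in which case we are in the "good" regime — or the initial data are negligible compared to $\omega$, and then the cylinder behaves essentially like an interior cylinder and the alternative argument of Theorem~\ref{Thm:1:1} (the two De Giorgi alternatives: either $u$ stays away from its supremum on a sizeable portion, or it does near the infimum, and in both cases one propagates smallness forward in time via the energy estimates and expansion of positivity) forces $\operatorname*{osc}$ over $(x_o,0)+Q_{\sigma\rho}^+$ to be at most $\eta\,\omega$ for fixed $\sigma,\eta\in(0,1)$. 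Iterating this dichotomy over a geometric sequence of radii yields, in the standard way, a modulus of continuity $\boldsymbol\om$ at the point $(x_o,0)$, depending only on the data, $\operatorname{dist}(K,\partial E)$, $M$, and $\om_o$; and when $u_o\in C^{\beta_o}$ one may take the sequence of radii and the bound on $\om_o$ to be power-like, producing the Hölder bound with exponent $\beta\le\beta_o$.

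To pass from continuity \emph{at the initial slice} to continuity on all of $K\times[0,T]$ with the stated joint modulus, I would combine the above with the interior estimate of Theorem~\ref{Thm:1:1}: any point $(x_1,t_1)\in K\times[0,T]$ is either within intrinsic parabolic distance $\sim t_1^{1/p}$ of the initial slice — where the initial-slice modulus applies after covering by forward cylinders — or it sits in an interior cylinder of size comparable to $\operatorname{dist}(K,\partial E)$ that stays away from $t=0$, where Theorem~\ref{Thm:1:1} gives the Hölder bound directly. A routine interpolation between these two regimes, using the intrinsic metric $|x_1-x_2|+M^{\frac{p-q-1}{p}}|t_1-t_2|^{1/p}$ throughout, yields the claimed estimate. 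The main obstacle I anticipate is the bookkeeping in the first alternative: one must verify that the boundary term generated by $u_o$ on the initial face can genuinely be absorbed into the energy inequalities with the correct intrinsic time-scaling (the factor $M^{\frac{p-q-1}{p}}$ in the metric reflects exactly this scaling), and that the "either the data dominate or they are negligible" dichotomy closes up under iteration so that the accumulated modulus is controlled purely by $\om_o$ and the data — this is where the doubly degenerate structure $0<q<p-1$, as opposed to the borderline case $q=p-1$ treated in \cite{BDL}, requires care.
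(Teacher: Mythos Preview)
Your overall architecture---a dichotomy between ``oscillation already controlled by $\om_o(\rho)$'' versus ``reduce oscillation by a fixed factor'', iterated over shrinking forward cylinders---matches the paper's. But you over-engineer the reduction step and miss the key simplification.

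When the oscillation of $u_o$ on $K_\rho(x_o)$ is small compared to $\boldsymbol\omega$, the paper does \emph{not} fall back on the interior two-alternative argument, expansion of positivity, or any logarithmic estimate (the paper explicitly dispenses with the latter). Instead, the comparison with $u_o$ yields \emph{pointwise} information at $t=0$: if neither $\boldsymbol\mu^+-\tfrac14\boldsymbol\omega>\sup_{K_\rho}u_o$ nor $\boldsymbol\mu^-+\tfrac14\boldsymbol\omega<\inf_{K_\rho}u_o$ fails, then $\operatorname{essosc}_{Q_o}u\le 2\operatorname{osc}_{K_\rho}u_o$; otherwise, say the second inequality holds, and then $u(\cdot,0)=u_o\ge\boldsymbol\mu^-+\tfrac14\boldsymbol\omega$ everywhere on $K_\rho$. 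This is exactly the hypothesis of Lemma~\ref{Lm:DG:initial:1} with $\xi=\tfrac14$, which in one stroke gives $u\ge\boldsymbol\mu^-+\tfrac18\boldsymbol\omega$ on a forward cylinder $K_{\rho/2}\times(0,\nu_o\theta\rho^p]$, hence the reduction $\operatorname{essosc}\le\tfrac78\boldsymbol\omega$. The corresponding energy estimate is Proposition~\ref{Prop:2:2}, where the level restriction~\eqref{Eq:3:2} makes the boundary term at $t=0$ \emph{vanish}---there is nothing to ``carry along'' as you suggest.

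Your plan to run the full interior alternative argument near $t=0$ would run into trouble: that argument needs a tall backward cylinder $Q_\rho(A\theta)$ with a ``bottom'' sub-cylinder to set up the alternatives, and there is no backward room at the initial slice. The paper avoids this entirely by using the initial pointwise information directly. You also do not mention the ``$u$ away from zero'' case \eqref{Eq:Hp-main}$_2$; there the paper reduces to the known boundary estimate for the degenerate parabolic $p$-Laplacian (\cite[Chapter~III, Lemma~11.1]{DB}) via the same change of variables as in Section~\ref{S:case-2}.
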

\begin{remark}\upshape
As we shall see in the proof of Theorem~\ref{Thm:1:2}, 
the estimate on the modulus of continuity actually holds true for all $p>1$ and $q>0$,
if $t_1=0$ or $t_2=0$.
\end{remark}
\subsubsection{Near $S_T$--Dirichlet Type Data}
\begin{theorem}\label{Thm:1:3}
Let $u$ be a bounded weak solution to the Dirichlet problem 
\eqref{Dirichlet} under the assumption \eqref{Eq:1:2p}. Assume \eqref{D} and \eqref{geometry} hold. Then $u$ is continuous in any compact set 
$\mathcal{K}\subset\overline{E}_T$.
More precisely, there is a modulus of continuity $\boldsymbol\om(\cdot)$,
determined by the data, $\al_*$, $\rho_o$, $\dist(\mathcal{K};\{t=0\})$, $M$ and $\boldsymbol\om_{g}(\cdot)$, such that
	\begin{equation*}
	\big|u(x_1,t_1)-u(x_2,t_2)\big|
	\le
	\boldsymbol\om\Big(|x_1-x_2|+M^{\frac{p-q-1}p}|t_1-t_2|^{\frac1p}\Big),
	\end{equation*}
for every pair of points $(x_1,t_1), (x_2,t_2)\in \mathcal{K}$.
In particular, if $g$ is H\"older continuous with exponent $\be_{g}$,
then $\boldsymbol\om(r)=\boldsymbol\gm M r^{\be} $ with some $\boldsymbol\gm>0$ and 
$\be\in(0,\be_{g}]$ depending on the data, $\al_*$, $\rho_o$, $\dist(\mathcal{K};\{t=0\})$ and $\be_{g}$.
\end{theorem}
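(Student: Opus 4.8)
The plan is to reduce Theorem~\ref{Thm:1:3} to a local oscillation-decay statement at a boundary point $(x_o,t_o)\in S_T$, working in intrinsically scaled cylinders adapted to the exponents $p>2$, $0<q<p-1$. First I would fix such a point and, after the standard translation and the replacement $v=u-g(x_o,t_o)$ (which satisfies an equation of the same structural type modulo a controllable perturbation coming from $Dg$ via \eqref{D}), reduce to showing that the oscillation of $v$ over a sequence of shrinking intrinsic cylinders $Q_j$ contracts geometrically. The crucial geometric input is \eqref{geometry}: it forces the complement of $E$ inside every cube $K_\rho(x_o)$ to occupy a fixed fraction $\al_*$ of the measure, so on that portion the solution is pinned to the boundary value (up to $\om_g$). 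This is exactly the hypothesis that replaces, at the lateral boundary, the measure-theoretic alternative one has for free in the interior.

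The key steps, in order: (i) record the energy (Caccioppoli) estimates and the logarithmic estimates for weak solutions to \eqref{Dirichlet}, now written for the truncations $(v-k)_\pm$ with test functions that need not vanish near $S_T$ — this is where assumptions \eqref{D} and \eqref{geometry} enter, since $(v-k)_\pm$ with an appropriate sign is an admissible test function by the boundary condition; (ii) set up the intrinsic geometry: given $\om=\operatorname{osc}_{Q_o} v$ and a radius $\rho$, build the cylinder with time-scale dictated by $M^{p-q-1}$ and by $\om$ exactly as in the interior proof (the same scaling appearing in $\dist_p$), and run the two alternatives; (iii) in the "degenerate" alternative, where $v$ is bounded away from its infimum on a large measure portion, combine the De Giorgi iteration with the boundary version of the measure-shrinking lemma, using \eqref{geometry} to guarantee that the starting measure-density exceeds the critical threshold on every time slice without having to prove it; (iv) in the complementary alternative, use the logarithmic/energy estimates to propagate a pointwise bound forward in time and then iterate De Giorgi on the other truncation; (v) conclude $\operatorname{osc}_{Q_1} v \le \max\{\eta\,\om,\ \gamma\,\om_g(\rho)\}$ for some $\eta\in(0,1)$, and iterate over $j$ to get $\operatorname{osc}_{Q_j} v \le \gamma\big(\eta^j\om + \sup_{i\le j}\om_g(\gamma\rho_i)\big)$, which upon optimizing yields the modulus $\boldsymbol\om$; the quantitative dependence on $\dist(\mathcal{K};\{t=0\})$ enters because near $t=0$ one invokes Theorem~\ref{Thm:1:2} instead, and the two estimates are patched. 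The Hölder case is then immediate: if $\om_g(r)=c\,r^{\be_g}$, the recursion is a standard geometric-series argument giving a power $r^\be$ with $\be\in(0,\be_g]$.

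The main obstacle will be step (iii): making the boundary De Giorgi iteration genuinely \emph{intrinsic}. In the doubly degenerate regime $p>2$, $0<q<p-1$, the natural time scaling depends on both $M$ and the current oscillation $\om$, so the cylinders one iterates on change shape from one step to the next, and one must check that \eqref{geometry} — which is a statement about \emph{cubes} $K_\rho(x_o)$, i.e.\ uniform in $\rho$ but blind to the time direction — still delivers a measure-density lower bound on \emph{every} time slice of the intrinsic cylinder, not merely on average. Handling the parabolic nature of the comparison (a pointwise-in-time statement vs.\ a space-time energy estimate) requires the usual time-slice argument, but here it must be carried out with constants that do not degenerate as $\om\to 0$, which is the delicate point. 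A secondary, more bookkeeping difficulty is controlling the perturbation from $Dg$: the term $|Dg|^{p-1}$ produced by freezing $g$ must be absorbed using only $g\in L^p(0,T;W^{1,p}(E))$ together with the continuity of $g$, which forces one to phrase the energy estimates with a right-hand side and to track how that right-hand side feeds into the final modulus $\boldsymbol\om$. I expect both of these to follow the template of \cite{DB} (Chapters~II--IV) and of the interior proof in Section~\ref{S:interior}, with the expansion-of-positivity machinery of \cite{BDL} available as a shortcut for the degenerate alternative.
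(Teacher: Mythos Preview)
There is a genuine gap at the very first step. The substitution $v=u-g(x_o,t_o)$ does \emph{not} yield an equation of the same structural type: the time term is $\partial_t(|u|^{q-1}u)$, which for $q\neq 1$ is not translation-invariant, so $v$ satisfies $\partial_t\big(|v+c|^{q-1}(v+c)\big)-\dvg\bl A(\cdot,v+c,Dv)=0$, and there is no way to regard $|v+c|^{q-1}(v+c)-|v|^{q-1}v$ as a ``controllable perturbation'' (certainly not one coming from $Dg$; shifting by a constant does not touch $Dg$ at all). This non-invariance is exactly the structural obstacle that distinguishes the doubly nonlinear case from the parabolic $p$-Laplacian, and it cannot be sidestepped by a change of unknown. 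For the same reason the logarithmic estimates you invoke are not available here for sign-changing solutions; the paper stresses that it dispenses with them entirely.

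The paper's route is different in two essential respects. First, rather than subtracting the boundary value, it compares $u$ with $g$ only through the choice of truncation level: either $\essosc_{Q_o\cap E_T}u\le 2\essosc_{Q_o\cap S_T}g$, or one of the levels $k=\boldsymbol\mu^\pm\mp\tfrac14\boldsymbol\om$ satisfies \eqref{Eq:3:3}, so that $(u-k)_\pm$ vanishes on $S_T$ and can be \emph{extended by zero} across the lateral boundary; by Lemma~\ref{Lm:A:2} the extension $u_k^\pm$ is then a genuine sub/super-solution in the full cylinder, and the geometric density \eqref{geometry} gives the slice-wise measure lower bound \eqref{Eq:7:3} for free. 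Second, to cope with the lack of translation-invariance one must run the dichotomy \eqref{Eq:Hp-main}: if $|\boldsymbol\mu^\pm|\lesssim\boldsymbol\om$ (``$u$ near zero'') the energy estimates produce the reduction directly via Lemma~\ref{Lm:DG:1} and, when $\boldsymbol\mu^-$ is strictly negative, the expansion of positivity Lemma~\ref{Lm:expansion:p}; if instead $\boldsymbol\mu^->\xi\boldsymbol\om$ or $\boldsymbol\mu^+<-\xi\boldsymbol\om$, one passes to $w=(u/\boldsymbol\mu^\pm)^q$, which solves a parabolic $p$-Laplacian type equation, and invokes the corresponding boundary estimate from \cite[Proposition~7.2]{BDL}. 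Your proposal misses this case distinction, and the secondary difficulty you flag about absorbing $|Dg|^{p-1}$ simply does not arise in the correct argument.
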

\subsubsection{Near $S_T$--Neumann Type Data}
\begin{theorem}\label{Thm:1:4}
Let $u$ be a bounded weak solution to the Neumann problem
\eqref{Neumann}.
 Assume $\pl E$ is of class $C^1$ and \eqref{N-data} holds. 
 Then $u$ is H\"older continuous in any compact set 
$\mathcal{K}\subset\overline{E}_T$.
More precisely, there exist constants $\boldsymbol\gm>1$ and $\be\in(0,1)$
determined by the data, $C_2$, $\dist(\mathcal{K};\{t=0\})$ and the structure of $\pl E$, such that
	\begin{equation*}
	\big|u(x_1,t_1)-u(x_2,t_2)\big|
	\le
	\boldsymbol \gm
	M\,
	\left(|x_1-x_2|+M^{\frac{p-q-1}p}|t_1-t_2|^{\frac1p}\right)^\be,
	\end{equation*}
for every pair of points $(x_1,t_1), (x_2,t_2)\in \mathcal{K}$.
\end{theorem}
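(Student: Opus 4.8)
The plan is to establish Theorem~\ref{Thm:1:4} by reducing it to the already-known interior regularity machinery of Theorem~\ref{Thm:1:1}, combined with a \emph{reflection} at the $C^1$-boundary $\pl E$. First I would localize: around any boundary point $(x_o,t_o)\in S_T$, use the $C^1$-regularity of $\pl E$ to flatten the boundary by a bi-Lipschitz (in fact $C^1$) change of variables $\Phi$ mapping a neighborhood of $x_o$ onto a half-ball, so that $\pl E$ becomes a piece of a hyperplane. Under this map the equation \eqref{Neumann} transforms into an equation of the same structural type \eqref{Eq:1:1}--\eqref{Eq:1:2p}, with new structural constants controlled by the $C^1$-norm of the chart (the $p$-growth is preserved because the Jacobian and its inverse are bounded); the Neumann condition $\bl A\cdot\mathbf n=\psi$ with $|\psi|\le C_2$ transforms into a conormal condition on the flat piece, still with a bounded right-hand side. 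Then I would perform the even reflection of $u$ across the flattened boundary. The key point is that, because the flux is conormal and the boundary datum is merely bounded (not zero), the reflected function $\tilde u$ is a weak \emph{sub}- and \emph{super}solution of a modified equation in the full ball — more precisely, one checks via the weak formulation that the reflection produces, on the gluing hyperplane, a distributional term that is absorbed into a bounded forcing term coming from $\psi$. This is exactly the situation covered by Remark~\ref{Rmk:1:1}, which notes that the method of Theorem~\ref{Thm:1:1} tolerates lower-order/forcing terms as in \cite[Chapters~II--IV]{DB}.

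The core of the argument then runs in parallel with the interior proof. I would set up the intrinsic cylinders $Q=K_\rho(x_o)\times(t_o-\theta\rho^p,t_o]$ with the doubly-nonlinear intrinsic scaling $\theta\sim(\omega/2^j)^{q+1-p}$ (the same scaling that appears implicitly in $\dist_p$ and in the statement of Theorem~\ref{Thm:1:1}), and derive the boundary analogues of the energy (Caccioppoli) estimates for the reflected sub/supersolution: the $C_2$-term contributes only lower-order quantities, integrable to the right power of $\rho$, so it is a genuine perturbation that vanishes under the iteration. From there I would run the alternative argument of De~Giorgi--DiBenedetto type — if the measure of the set where $\tilde u$ is close to its supremum in $Q$ is small, a De~Giorgi iteration reduces the oscillation; in the complementary case, the positivity is expanded via the logarithmic/energy estimates — to obtain a reduction of oscillation $\operatorname{osc}_{\lambda Q}\tilde u\le(1-\sigma)\operatorname{osc}_Q\tilde u$ for fixed $\lambda,\sigma\in(0,1)$ depending only on the data, $C_2$, and the $C^1$-character of $\pl E$. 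Iterating over a nested sequence of intrinsic cylinders shrinking to $(x_o,t_o)$ yields a power decay of the oscillation, hence H\"older continuity at boundary points; combined with the interior estimate of Theorem~\ref{Thm:1:1} and a standard covering/chaining argument on the compact set $\mathcal K\subset\overline E_T$ (here $\dist(\mathcal K;\{t=0\})>0$ keeps us away from the initial slice, which is handled separately by Theorem~\ref{Thm:1:2}), one gets the global estimate
\[
|u(x_1,t_1)-u(x_2,t_2)|\le\boldsymbol\gm M\Big(|x_1-x_2|+M^{\frac{p-q-1}{p}}|t_1-t_2|^{\frac1p}\Big)^{\be}
\]
with $\boldsymbol\gm,\be$ as claimed.

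The main obstacle I expect is the reflection step itself. Unlike the Dirichlet case, where one can compare with the (continuous) boundary datum $g$ and use that $u-g$ has a definite sign structure near $\pl E$, here the Neumann datum only provides a bounded flux, and the geometry enters in an essential way: flattening a $C^1$ (not $C^{1,\alpha}$) boundary produces coefficients that are merely continuous, so one must make sure the transformed principal part still satisfies \eqref{Eq:1:2p} with the \emph{same} structure (it does, since only boundedness of the Jacobian is used, not its modulus of continuity) and that the reflected equation's forcing term is controlled uniformly. Verifying rigorously that the even reflection of a conormal-boundary solution is a two-sided weak solution of the perturbed equation on the whole ball — keeping track of the jump in the co-normal derivative and showing it is absorbed by the $C_2$-bound — is the delicate computation; everything downstream is a boundary-adapted rerun of the intrinsic-scaling arguments already developed for Theorem~\ref{Thm:1:1}. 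An alternative to explicit reflection, which I would fall back on if the reflection bookkeeping becomes unwieldy, is to work directly with the localized equation on the half-space, deriving boundary Caccioppoli estimates using test functions that do not vanish on the flat boundary portion and exploiting \eqref{N-data} to bound the resulting boundary integral by $C_2\rho^{\,N}\times(\text{oscillation})$; this is the route taken in \cite[Chapter~III]{DB} for the $p$-Laplacian and adapts with the doubly-nonlinear intrinsic scaling essentially verbatim.
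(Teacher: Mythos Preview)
Your primary route---flatten, then evenly reflect, then invoke the interior theory via Remark~\ref{Rmk:1:1}---has a genuine gap. With an inhomogeneous Neumann datum $|\psi|\le C_2$, even reflection across the flattened boundary does \emph{not} produce a weak solution with a bounded volumetric forcing term: the jump in the conormal derivative gives a right-hand side of the form $2\psi\,\d\mathcal{H}^{N-1}\big|_{\{x_N=0\}}$, a surface measure supported on the gluing hyperplane. This is not the ``lower-order terms'' situation of Remark~\ref{Rmk:1:1} (which refers to volumetric terms as in \cite[Chapters~II--IV]{DB}), so you cannot simply feed the reflected function back into the interior machinery of Theorem~\ref{Thm:1:1}. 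To control that surface term in the energy estimates you would have to use a trace inequality and absorb it---at which point the reflection has bought you nothing over working directly on $E_T$, which is exactly your fallback.

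Your fallback is the correct idea, and it is essentially what the paper does, though the paper streamlines it further: there is \emph{no} flattening and \emph{no} reflection. The paper works directly on the intersected cylinders $Q_\rho(x_o,t_o)\cap E_T$ and relies on three observations. First, Proposition~\ref{Prop:2:4} already provides the Neumann Caccioppoli estimate, with the boundary integral absorbed as a volumetric term $\boldsymbol\gamma C_2^{p/(p-1)}|\{(u-k)_\pm>0\}|$. Second, since $(u-k)_\pm\zeta^p$ does not vanish on $S_T$, the interior Sobolev imbedding \cite[Chapter~I, Proposition~3.1]{DB} is replaced by the version \cite[Chapter~I, Proposition~3.2]{DB} that does not require zero boundary values; its constant depends on $N$, the structure of $\pl E$, and the scale-invariant ratio $T/|E|^{p/N}$. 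Third, the De~Giorgi isoperimetric inequality \cite[Chapter~I, Lemma~2.2]{DB} holds on extension domains, hence on $C^1$ domains. With these three substitutions the \emph{second} interior proof (Section~\ref{S:6})---i.e.\ Lemmas~\ref{Lm:DG:1}, \ref{Lm:DG:initial:1}, \ref{Lm:DG:initial:2}, \ref{Lm:4:2}, \ref{Lm:A:4}, \ref{Lm:5:3}---is rerun verbatim on $Q_\rho\cap E_T$ to obtain the reduction of oscillation at a lateral boundary point. Note that the paper explicitly built the second proof for this purpose: the first proof (Section~\ref{S:5}) goes through the expansion of positivity, Lemma~\ref{Lm:expansion:p}, which does not transplant to the Neumann setting, whereas the second proof uses only the energy estimates and thus has a direct Neumann analogue via Proposition~\ref{Prop:2:4}.
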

\begin{remark}\upshape
The proofs of Theorems~\ref{Thm:1:2} -- \ref{Thm:1:4} are local in nature. 
As a result, it suffices to require the boundary 
data in the Dirichlet problem \eqref{Dirichlet}
or the Neumann problem \eqref{Neumann} to be taken just on a portion of the parabolic boundary.
\end{remark}
\subsection{Novelty and Significance}\label{S:NS}
The doubly nonlinear parabolic equation \eqref{Eq:1:1} accounts for many physical models, including dynamics of glaciers, shallow water flows and friction dominated flows in a gas network. We refer to \cite{BDL} for a source of physical motivations.
The mathematical interest of this equation lies in the degeneracy or the singularity or both it possesses,
and a broader class of parabolic equations it generates, which include the parabolic $p$-Laplacian and the porous medium equation as particular instances.

The issue of local H\"older regularity for this equation has been investigated by a number of authors, in various forms and with different notions of solution, cf.~\cite{Henriques-13, Ivanov-89, PV, Zhou-94}. However, all of them assume that $p>2$ and $0<q<1$. 

\begin{figure}[h]\label{Fig:range}
\centering
\includegraphics[scale=0.8]{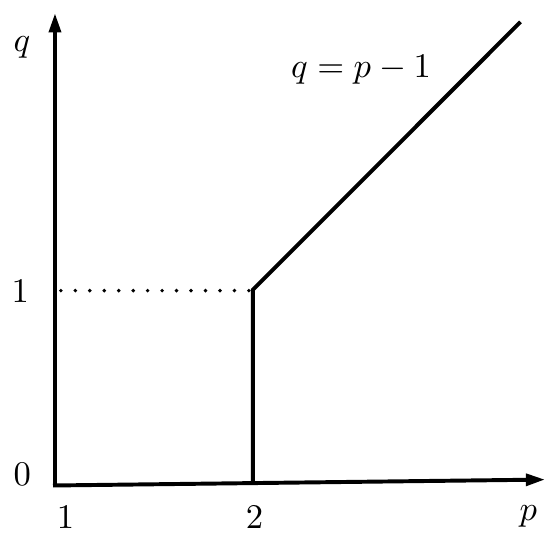}
\caption{Range of $p$ and $q$ }
\end{figure}
The main novelty of our results consists in extending the known range to a larger one, that is, $p>2$ and $0<q<p-1$, cf.~Figure~\ref{Fig:range}. On the other hand, even in the case $p>2$ and $0<q<1$, our results are not covered by the previous works, as they either use different notions of solution \cite{Ivanov-89, PV, Zhou-94}, or assume non-negativity of the solution \cite{Henriques-13}.

One of our main technical advances from the previous works lies in that we dispense with any kind of logarithmic type energy estimates. As such our arguments should have further implications in the context of the so-called $Q$-minima from the calculus of variations, cf.~\cite{Zhou-94}.  

The expansion of positivity for the degenerate parabolic equations has been established in \cite{DBGV-acta}
as a key tool to study Harnack's inequalty.
Roughly speaking, it asserts that the measure of the positivity set of a non-negative, super-solution
translates into pointwise positivity at later times.
Using it to handle the H\"older regularity seems new in the doubly degenerate setting. Similar ideas have appeared in \cite{GSV, Liao} in different forms, either for the parabolic $p$-Laplacian or for the porous medium equation. The virtual advantage of this important property lies in the simplification it brings and a geometric character it offers. On the other hand, the proof of this property is not easy, and meanwhile it is only known to hold in the context of partial differential equations. This latter point unfortunately results in certain restrictions for its application near the boundary.
In particular, when we deal with the boundary regularity for Neumann problems, the original approach of DiBenedetto \cite{DB86}
has to be evoked and adapted.

Our arguments can be adapted to the borderline cases. In particular, when $q=1$, the arguments deal with the degenerate, parabolic $p$-Laplacian; when $p=2$, the porous medium equation can be treated; when $q=p-1$, we are back to our first work \cite{BDL}; see also \cite{Trud1} for non-negative solutions.
Beyond these borderline cases, it will be a subject of our next investigations.
\subsection{Notations and Definitions}\label{S:1:2}
\subsubsection{Notion of Local Solution}\label{S:1:2:1}
A function
\begin{equation}  \label{Eq:1:3p}
	u\in C\big(0,T;L^{q+1}_{\loc}(E)\big)\cap L^p_{\loc}\big(0,T; W^{1,p}_{\loc}(E)\big)
\end{equation}
is a local, weak sub(super)-solution to \eqref{Eq:1:1} with the structure
conditions \eqref{Eq:1:2p}, if for every compact set $K\subset E$ and every sub-interval
$[t_1,t_2]\subset (0,T]$
\begin{equation}  \label{Eq:1:4p}
	\int_K |u|^{q-1}u\z \,\dx\bigg|_{t_1}^{t_2}
	+
	\iint_{K\times (t_1,t_2)} \big[-|u|^{q-1}u\z_t+\bl{A}(x,t,u,Du)\cdot D\z\big]\dx\dt
	\le(\ge)0
\end{equation}
for all non-negative test functions
\begin{equation*}
\z\in W^{1,q+1}_{\loc}\big(0,T;L^{q+1}(K)\big)\cap L^p_{\loc}\big(0,T;W_o^{1,p}(K)%
\big).
\end{equation*}
This guarantees that all the integrals in \eqref{Eq:1:4p} are convergent.

A function $u$ that is both a local weak sub-solution and a local weak super-solution
to \eqref{Eq:1:1} -- \eqref{Eq:1:2p} is a local weak solution.

\subsubsection{Notion of Solution to the Dirichlet Problem}\label{S:1:4:3}
A function
\begin{equation*}  
	u\in C\big(0,T;L^{q+1}(E)\big)\cap L^p\big(0,T; W^{1,p}(E)\big)
\end{equation*}
is a weak sub(super)-solution to \eqref{Dirichlet}, 
if for every sub-interval
$[t_1,t_2]\subset (0,T]$,
\begin{equation*} 
\begin{aligned}
	\int_{E} |u|^{q-1}u\z \,\dx\bigg|_{t_1}^{t_2}
	&+
	\iint_{E\times(t_1,t_2)} \big[-|u|^{q-1}u\z_t+\bl{A}(x,t,u,Du)\cdot D\z\big]\dx\dt
	\le(\ge)0
\end{aligned}
\end{equation*}
for all non-negative test functions
\begin{equation*}
\z\in W_{\loc}^{1,q+1}\big(0,T;L^{q+1}(E)\big)\cap L_{\loc}^p\big(0,T;W_o^{1,p}(E)%
\big).
\end{equation*}
Moreover, setting $\hat{q}:=\min\{2,q+1\}$,
the initial datum is taken in the sense that for any compact set $K\subset E$,
\[
\int_{K\times\{t\}}(u-u_o)^{\hat{q}}_{\pm}\,\dx\to0\quad\text{ as }t\downarrow0.
\]
The Dirichlet datum $g$ is attained under $u\le(\ge)g$
on $\pl E$ in the sense that the traces of $(u-g)_{\pm}$
vanish as functions in $W^{1,p}(E)$ for a.e. $t\in(0,T]$, i.e. $(u-g)_{\pm}\in L^p(0,T; W^{1,p}_o(E))$.
Notice that no {\it a priori} information is assumed on the smoothness of $\pl E$.

A function $u$ that is both a weak sub-solution and a weak super-solution
to \eqref{Dirichlet} is a weak solution.
\subsubsection{Notion of Solution to the Neumann Problem}\label{S:1:4:4}
A function
\begin{equation*}  
	u\in C\big(0,T;L^{q+1}(E)\big)\cap L^p\big(0,T; W^{1,p}(E)\big)
\end{equation*}
is a weak sub(super)-solution to \eqref{Neumann}, 
if for every compact set $K\subset \rr^N$ and every sub-interval
$[t_1,t_2]\subset (0,T]$,
\begin{equation*}  
\begin{aligned}
	\int_{K\cap E} |u|^{q-1}u\z \,\dx\bigg|_{t_1}^{t_2}
	&+
	\iint_{\{K\cap E\}\times(t_1,t_2)} \big[-|u|^{q-1}u\z_t+\bl{A}(x,t,u,Du)\cdot D\z\big]\dx\dt\\
	&\le(\ge)\iint_{\{K\cap\pl E\}\times(t_1,t_2)}\psi(x,t,u)\z\,\d\sig\dt
\end{aligned}
\end{equation*}
for all non-negative test functions
\begin{equation*}
\z\in W_{\loc}^{1,q+1}\big(0,T;L^{q+1}(K)\big)\cap L_{\loc}^p\big(0,T;W_o^{1,p}(K)%
\big).
\end{equation*}
Here $\d\sig$ denotes the surface measure on $\pl E$.
The Neumann datum $\psi$ is reflected in the boundary integral on the right-hand side.
Moreover, the initial datum is taken 
as in the Dirichlet problem.

A function $u$ that is both a weak sub-solution and a weak super-solution
to \eqref{Neumann} is a weak solution.


\medskip
{\it Acknowledgement.} 
V.~B\"ogelein and N.~Liao have been supported by the FWF-Project P31956-N32
``Doubly nonlinear evolution equations".

\section{Energy Estimates}\label{S:energy}

In this section we present certain energy estimates for weak sub(super)-solutions to \eqref{Eq:1:1} -- \eqref{Eq:1:2p}.
They are analogs of the energy estimates   derived in \cite{BDL}, which will be referred to for details.
Moreover, it is noteworthy that they actually hold true for all $p>1$ and $q>0$.

The different roles played by sub-solutions and super-solutions are emphasized.
When we state {\it ``$u$ is a sub(super)-solution...''}
and use $``\pm"$ or $``\mp"$ in what follows, we mean the sub-solution corresponds to
the upper sign and the super-solution corresponds to the lower sign in the statement.

 For any $k\in\rr$, we denote the truncated functions
\[
(u-k)_+\equiv \max\big\{u-k,0\big\},\qquad (u-k)_-\equiv \max\big\{-(u-k),0\big\}.
\]
For $w,k\in\rr$ we define two non-negative quantities
\begin{equation*}
	\mathfrak g_\pm (w,k)=\pm q\int_{k}^{w}|s|^{q-1}(s-k)_\pm\,\ds.
\end{equation*}
For $b\in\rr$ and $\al>0$, we will embolden $\boldsymbol{b}^\al$ to denote the
signed $\al$-power of $b$ as 
\begin{align*}
\boldsymbol{b}^\al=
\left\{
\begin{array}{cl}
|b|^{\al-1}b, &b\neq0,\\[5pt]
0, &b=0.
\end{array}\right.
\end{align*}

Throughout the rest of this note, 
we will use the symbols 
\begin{equation*}
\left\{
\begin{aligned}
(x_o,t_o)+Q_\rho(\theta)&:=K_{\rho}(x_o)\times(t_o-\theta\rho^p,t_o),\\[5pt]
(x_o,t_o)+Q_{R,S}&:=K_R(x_o)\times (t_o-S,t_o),
\end{aligned}\right.
\end{equation*} 
to denote (backward) cylinders with the indicated positive parameters;
when the context is unambiguous, we will omit the vertex $(x_o,t_o)$ from the symbols for simplicity.

First of all, we present energy estimates for local weak sub(super)-solutions defined in Section~\ref{S:1:2:1}.
The proof is similar to \cite[Proposition~3.1]{BDL}, which we refer to for details.
The only difference is that in the present situation, $\boldsymbol{u}^{p-1}$ must be replaced by $\boldsymbol{u}^q$ in terms related to the time derivative and $\mathfrak g_\pm$ has to be defined as above.
Since the testing functions and the treatment of the term containing the vector-field $\bl A$ remain unchanged, the constant $\boldsymbol\gamma$ on the right-hand side of the estimates is independent of $q$.

\begin{proposition}\label{Prop:2:1}
	Let $u$ be a  local weak sub(super)-solution to \eqref{Eq:1:1} -- \eqref{Eq:1:2p} in $E_T$.
	There exists a constant $\boldsymbol \gm (C_o,C_1,p)>0$, such that
 	for all cylinders $Q_{R,S}\Subset E_T$,
 	every $k\in\rr$, and every non-negative, piecewise smooth cutoff function
 	$\z$ vanishing on $\pl K_{R}(x_o)\times (t_o-S,t_o)$,  there holds
\begin{align*}
	\max \bigg\{ \essup_{t_o-S<t<t_o}&\int_{K_R(x_o)\times\{t\}}	
	\z^p\mathfrak g_\pm (u,k)\,\dx,
	\iint_{Q_{R,S}}\z^p|D(u-k)_\pm|^p\,\dx\dt \bigg\}\\
	&\le
	\boldsymbol \gm\iint_{Q_{R,S}}
		\Big[
		(u-k)^{p}_\pm|D\z|^p + \mathfrak g_\pm (u,k)|\partial_t\z^p|
		\Big]
		\,\dx\dt\\
	&\phantom{\le\,}
	+\int_{K_R(x_o)\times \{t_o-S\}} \z^p \mathfrak g_\pm (u,k)\,\dx.
\end{align*}
\end{proposition}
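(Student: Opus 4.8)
The plan is to test the weak formulation \eqref{Eq:1:4p} with $\z^p(u-k)_\pm$, localized in time, exactly as in \cite[Proposition~3.1]{BDL}, and to account carefully for the term coming from the time derivative. First I would pick a Lipschitz cutoff $\tau$ in time supported on $(t_o-S,t]$ for a fixed $t\in(t_o-S,t_o)$, and use $\z^p(u-k)_\pm \tau$ as a test function; since this is not admissible directly (it lacks the required time regularity), I would proceed through a Steklov-averaging procedure, or equivalently quote the corresponding approximation argument in \cite{BDL}, so that the manipulations below are justified. The key algebraic identity is that for the parabolic term,
\[
\int \partial_t\big(|u|^{q-1}u\big)\,\z^p(u-k)_\pm\,\dx
=
\frac{\d}{\dt}\int \z^p\,\mathfrak g_\pm(u,k)\,\dx
-\int \mathfrak g_\pm(u,k)\,\partial_t\z^p\,\dx,
\]
which is precisely the role of the functions $\mathfrak g_\pm(w,k)=\pm q\int_k^w |s|^{q-1}(s-k)_\pm\,\ds$: one checks $\partial_t\mathfrak g_\pm(u,k)=q|u|^{q-1}(u-k)_\pm\,\partial_t u = \partial_t(|u|^{q-1}u)\,(u-k)_\pm$ pointwise (for a.e.\ $t$), so after multiplying by $\z^p$ and integrating by parts in $t$ one obtains the two displayed terms. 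This substitution is the only place where $\boldsymbol{u}^{p-1}$ from \cite{BDL} gets replaced by $\boldsymbol{u}^q$, and it does not affect any constants.

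Next I would handle the diffusion term. Using the structure conditions \eqref{Eq:1:2p} and $D\big(\z^p(u-k)_\pm\big)=\z^p D(u-k)_\pm \pm p\z^{p-1}(u-k)_\pm D\z$, the good term is bounded below by $C_o\iint \z^p|D(u-k)_\pm|^p$, while the cross term is estimated by Young's inequality:
\[
\iint \z^{p-1}(u-k)_\pm |D\z|\,|D(u-k)_\pm|^{p-1}
\le
\epsilon\iint \z^p|D(u-k)_\pm|^p
+\boldsymbol\gm(\epsilon)\iint (u-k)_\pm^p|D\z|^p.
\]
Choosing $\epsilon$ small relative to $C_o$ and absorbing, one is left with the asserted right-hand side. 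Since the testing functions and the vector-field estimates are literally those of \cite{BDL}, the constant $\boldsymbol\gm$ depends only on $C_o,C_1,p$ and not on $q$.

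Finally I would combine the two pieces: for a.e.\ $t\in(t_o-S,t_o)$,
\[
\int_{K_R(x_o)\times\{t\}}\z^p\mathfrak g_\pm(u,k)\,\dx
+C_o\iint_{K_R(x_o)\times(t_o-S,t)}\z^p|D(u-k)_\pm|^p\,\dx\dt
\le \text{(RHS)} + \int_{K_R(x_o)\times\{t_o-S\}}\z^p\mathfrak g_\pm(u,k)\,\dx,
\]
where the right-hand side is the stated integral involving $(u-k)_\pm^p|D\z|^p$ and $\mathfrak g_\pm(u,k)|\partial_t\z^p|$. Taking the essential supremum over $t$ in the first term and, separately, $t\uparrow t_o$ in the second, then taking the maximum of the two resulting inequalities, yields the claim (the factor $C_o$ is absorbed into $\boldsymbol\gm$). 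The main technical obstacle is the justification of the test function: $\z^p(u-k)_\pm$ is not in the admissible class because $u$ only has $C(0,T;L^{q+1}_{\loc})$ time regularity, so one must regularize in time (Steklov averages) and verify that the initial/terminal boundary terms pass to the limit — this is exactly the point treated in \cite[Proposition~3.1]{BDL}, and I would invoke it rather than redo it, noting only that replacing $\boldsymbol u^{p-1}$ by $\boldsymbol u^q$ and $\mathfrak g_\pm$ accordingly changes nothing in that approximation scheme.
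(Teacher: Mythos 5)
Your proposal is correct and follows essentially the same route as the paper, which simply refers to \cite[Proposition~3.1]{BDL} and notes that the only change is replacing $\boldsymbol{u}^{p-1}$ by $\boldsymbol{u}^q$ in the time-derivative terms with $\mathfrak g_\pm$ redefined accordingly; your identification of the key identity $\partial_t\mathfrak g_\pm(u,k)=\pm\,\partial_t(|u|^{q-1}u)(u-k)_\pm$ (mind the $\pm$ for the lower sign), the Steklov-averaging justification, and the observation that the vector-field estimates and hence $\boldsymbol\gm$ are unchanged match the paper's argument exactly.
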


Next, we consider the situation near the initial level $t=0$
when a continuous datum $u_o$ is prescribed.
Suppose the level $k$ satisfies
\begin{equation}\label{Eq:3:2}
	\left\{
	\begin{aligned}
	&k\ge\sup_{K_R(x_o)}u_o\quad\text{ for sub-solutions},\\
	&k\le\inf_{K_R(x_o)}u_o\quad\text{ for super-solutions}.
	\end{aligned}
	\right.
\end{equation}
The following energy estimate can be obtained as in \cite[Proposition~3.2]{BDL}.
\begin{proposition}\label{Prop:2:2}
	Let $u$ be a  local weak sub(super)-solution to \eqref{Dirichlet} with \eqref{Eq:1:2p} in $E_T$.
	There exists a constant $\boldsymbol \gm (C_o,C_1,p)>0$, such that
 	for all cylinders $K_R(x_o)\times (0,S)\subset E_T$,
 	every $k\in\rr$ satisfying \eqref{Eq:3:2}
	and every non-negative, piecewise smooth cutoff function
 	$\z$ independent of $t$ and vanishing on $\pl K_{R}(x_o)$,  there holds
\begin{align*}
	\essup_{0<t<S}&\int_{K_R(x_o)\times\{t\}}	
	\z^p\mathfrak g_\pm (u,k)\,\dx
	+
	\iint_{K_R(x_o)\times (0,S)}\z^p|D(u-k)_\pm|^p\,\dx\dt\\
	&\le
	\boldsymbol \gm\iint_{K_R(x_o)\times (0,S)}
		(u-k)^{p}_\pm|D\z|^p
		\,\dx\dt.
\end{align*}
\end{proposition}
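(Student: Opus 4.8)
The plan is to mimic the proof of \cite[Proposition~3.2]{BDL}, starting from the weak formulation \eqref{Eq:1:4p} with a test function of the form $\z^p(u-k)_\pm$, where $\z$ is the prescribed $t$-independent cutoff. The crucial structural feature here is the condition \eqref{Eq:3:2} on the level $k$: it forces the initial datum $u_o$ to lie below (resp.\ above) $k$ on $K_R(x_o)$, and this is precisely what will allow us to discard the initial-time boundary term altogether, rather than keeping it on the right-hand side as in Proposition~\ref{Prop:2:1}.

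First I would handle the parabolic (time-derivative) term. Testing $\partial_t(\boldsymbol u^q)$ against $\z^p(u-k)_\pm$ and integrating by parts in $t$ produces, up to the usual Steklov-averaging technicalities, the quantity $\int_{K_R(x_o)\times\{t\}}\z^p\mathfrak g_\pm(u,k)\,\dx$ at the upper time level minus its value at $t=0$. The identity relating $\partial_t \mathfrak g_\pm(u,k)$ to $\boldsymbol u^q{}_t\,(u-k)_\pm$ is the same computation as in \cite{BDL}, with $\boldsymbol u^{p-1}$ there replaced by $\boldsymbol u^q$ and $\mathfrak g_\pm$ redefined as in Section~\ref{S:energy}; since $\z$ is independent of $t$ there is no $\partial_t\z^p$ contribution at all. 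At $t=0$ we have $\mathfrak g_+(u_o,k)=0$ on $K_R(x_o)$ because $u_o\le k$ there forces $(u_o-k)_+\equiv 0$ (and symmetrically $\mathfrak g_-(u_o,k)=0$ when $k\le\inf u_o$). Hence the initial term vanishes identically, leaving only the good term at the generic time $t$.

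Next I would treat the elliptic term: using the first structure condition in \eqref{Eq:1:2p}, $\bl A\cdot D(\z^p(u-k)_\pm)=\z^p\bl A\cdot D(u-k)_\pm + p\z^{p-1}(u-k)_\pm\,\bl A\cdot D\z \ge C_o\z^p|D(u-k)_\pm|^p - pC_1\z^{p-1}(u-k)_\pm|D(u-k)_\pm|^{p-1}|D\z|$, where the second bound in \eqref{Eq:1:2p} was used in the cross term. Absorbing the cross term via Young's inequality (splitting off a small fraction of $C_o\z^p|D(u-k)_\pm|^p$) yields a lower bound of the form $\tfrac{C_o}{2}\z^p|D(u-k)_\pm|^p - \boldsymbol\gm(u-k)_\pm^p|D\z|^p$. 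Combining the parabolic and elliptic contributions, taking the essential supremum over $0<t<S$ in the first term and using that everything on the right is already the stated expression, gives the claimed inequality; taking the maximum over the two terms (energy sup and gradient integral) as a sum is harmless up to a factor $2$.

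The only genuinely delicate point, as usual for doubly nonlinear equations, is the justification of the integration by parts in time, since $u\in C(0,T;L^{q+1}_{\loc})$ need not be weakly differentiable in $t$; this is dealt with by a Steklov averaging (or mollification in $t$) of the weak formulation, passing to the limit using the continuity $u\in C(0,T;L^{q+1})$ together with the elementary monotonicity inequalities for $\mathfrak g_\pm$, exactly as in \cite[Proposition~3.2]{BDL}. Since that argument is identical here — the substitution $\boldsymbol u^{p-1}\rightsquigarrow\boldsymbol u^q$ does not affect any of the convexity/monotonicity structure used — I would simply refer to \cite{BDL} for these routine but technical details rather than reproduce them.
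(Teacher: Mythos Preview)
Your proposal is correct and follows essentially the same approach as the paper: the paper itself simply refers to \cite[Proposition~3.2]{BDL} with the substitution $\boldsymbol u^{p-1}\rightsquigarrow\boldsymbol u^q$, and you have spelled out precisely that argument---time-independent cutoff kills the $\partial_t\z^p$ term, condition \eqref{Eq:3:2} kills the initial boundary term, and the elliptic part is handled by the structure conditions and Young's inequality.
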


Next, we turn our attention to the energy estimates near $S_T$.
When dealing with Dirichlet data we need to assume
the following restrictions on the level $k$
\begin{equation}\label{Eq:3:3}
	\left\{
	\begin{aligned}
	&k\ge\sup_{Q_{R,S}\cap S_T}g\quad\text{ for sub-solutions},\\
	&k\le\inf_{Q_{R,S}\cap S_T}g\quad\text{ for super-solutions}.
	\end{aligned}
	\right.
\end{equation} 
The following energy estimate can be obtained as in \cite[Proposition~3.3]{BDL}.
\begin{proposition}\label{Prop:2:3}
	Let $u$ be a  local weak sub(super)-solution to \eqref{Dirichlet} with \eqref{Eq:1:2p} in $E_T$.
	There exists a constant $\boldsymbol \gm (C_o,C_1,p)>0$, such that
 	for all cylinders $Q_{R,S} $ with the vertex $(x_o,t_o)\in S_T$,
 	every $k\in\rr$ satisfying \eqref{Eq:3:3}, and every non-negative, piecewise smooth cutoff function
 	$\z$ vanishing on $\pl K_{R}(x_o)\times (t_o-S,t_o)$,  there holds
\begin{align*}
	\max \bigg\{
	\essup_{t_o-S<t<t_o}&\int_{\{K_R(x_o)\cap E\}\times\{t\}}	
	\z^p\mathfrak g_\pm (u,k)\,\dx,
	\iint_{Q_{R,S}\cap E_T}\z^p|D(u-k)_\pm|^p\,\dx\dt
	\bigg\}\\
	&\le
	\boldsymbol \gm\iint_{Q_{R,S}\cap E_T}
		\Big[
		(u-k)^{p}_\pm|D\z|^p + \mathfrak g_\pm (u,k)|\partial_t\z^p|
		\Big]
		\,\dx\dt\\
	&\phantom{\le\ }
	+\int_{\{K_R(x_o)\cap E\}\times \{t_o-S\}} \z^p \mathfrak g_\pm (u,k)\,\dx.
\end{align*}
\end{proposition}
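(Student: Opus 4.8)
\textbf{Proof proposal for Proposition~\ref{Prop:2:3}.}
The plan is to mimic the proof of the interior energy estimate (Proposition~\ref{Prop:2:1}, itself modeled on \cite[Proposition~3.3]{BDL}), keeping careful track of which boundary terms survive once the test function is only required to vanish on the lateral part $\pl K_R(x_o)\times(t_o-S,t_o)$ rather than on all of $\pl\big((K_R(x_o)\cap E)\times(t_o-S,t_o)\big)$. The reason the restriction \eqref{Eq:3:3} on $k$ is needed is precisely to kill the extra boundary contribution on $\{K_R(x_o)\cap\pl E\}\times(t_o-S,t_o)$: with that choice of $k$, the traces of $(u-g)_\pm$ vanish on $S_T$ (this is exactly how the Dirichlet datum is attained in Section~\ref{S:1:4:3}), and since $k\ge g$ (resp.\ $k\le g$) on $Q_{R,S}\cap S_T$ we get $(u-k)_+\le(u-g)_+$ (resp.\ $(u-k)_-\le(g-u)_-$), so the trace of $(u-k)_\pm$ on $K_R(x_o)\cap\pl E$ vanishes for a.e.\ $t$ as well; consequently $\z^p(u-k)_\pm$ is an admissible test function (after the standard mollification in time) even though $\z$ itself need not vanish on $\pl E$.

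First I would fix $t\in(t_o-S,t_o]$ and test the weak formulation \eqref{Eq:1:4p} on $\{K_R(x_o)\cap E\}\times(t_o-S,t)$ with the Steklov-averaged version of $\pm\z^p(u-k)_\pm$; the lateral faces contribute nothing because $\z$ vanishes on $\pl K_R(x_o)$, and the face on $K_R(x_o)\cap\pl E$ contributes nothing by the trace argument above, so no boundary integral over $\pl E$ appears despite $u$ solving only a Dirichlet problem. Next, the parabolic term is handled exactly as in \cite{BDL}: one uses that $\partial_t(|u|^{q-1}u)$ paired against $\pm\z^p(u-k)_\pm$ produces, after integration by parts in $t$, the quantity $\z^p\mathfrak g_\pm(u,k)$ at the time levels $t$ and $t_o-S$ plus the term $\mathfrak g_\pm(u,k)\,\partial_t\z^p$ with the right sign, using the elementary inequality relating $\int_k^w|s|^{q-1}(s-k)_\pm\,\ds$ to $(|w|^{q-1}w-|k|^{q-1}k)(w-k)_\pm$; this is where the only genuine change from the $q=p-1$ case of \cite{BDL} enters, namely replacing $\boldsymbol u^{p-1}$ by $\boldsymbol u^q$ and using the $\mathfrak g_\pm$ defined in Section~\ref{S:energy}, and it is independent of the structure of $\bl A$. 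Then the diffusion term: on the set where $(u-k)_\pm>0$ one has $D(u-k)_\pm=Du$, and expanding $D\big(\z^p(u-k)_\pm\big)=\z^pD(u-k)_\pm+p\z^{p-1}(u-k)_\pm D\z$ and invoking the structure conditions \eqref{Eq:1:2p}, the coercivity gives the good term $C_o\iint\z^p|D(u-k)_\pm|^p$ while the growth bound gives a cross term $C_1\iint\z^{p-1}(u-k)_\pm|D\z||D(u-k)_\pm|^{p-1}$ which is absorbed via Young's inequality into the good term at the cost of the term $\iint(u-k)_\pm^p|D\z|^p$ on the right. Finally, taking the supremum over $t\in(t_o-S,t_o)$ in the first (parabolic) piece and combining with the gradient bound yields the asserted estimate; the constant depends only on $C_o,C_1,p$ and, as remarked, not on $q$.

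The main obstacle is not the algebra but the justification that $\z^p(u-k)_\pm$ is genuinely an admissible test function up to the lateral boundary portion $K_R(x_o)\cap\pl E$, since $\pl E$ is assumed to carry no smoothness and the trace must be interpreted in the $W^{1,p}$-sense used in the definition of the Dirichlet problem. This is dealt with by noting $(u-k)_\pm\in L^p(0,T;W^{1,p}_o(E))$ under \eqref{Eq:3:3} — which is exactly the content of the remark following the definition in Section~\ref{S:1:4:3}, combined with the monotonicity $0\le(u-k)_\pm\le(u-g)_\pm$ pointwise — so $\z^p(u-k)_\pm$ lies in the admissible class $W^{1,q+1}_{\loc}(0,T;L^{q+1}(K_R(x_o)))\cap L^p_{\loc}(0,T;W^{1,p}_o(K_R(x_o)))$ once $\z$ vanishes on $\pl K_R(x_o)$; the time regularity needed for the integration by parts in the parabolic term is then recovered through the standard Steklov-averaging procedure and a passage to the limit, exactly as in \cite[Proposition~3.3]{BDL}. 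Beyond this point every step is a verbatim repetition of the interior case with $K_R(x_o)$ replaced by $K_R(x_o)\cap E$ throughout, so I would simply refer to \cite{BDL} for the remaining details.
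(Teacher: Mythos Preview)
Your proposal is correct and follows exactly the approach the paper intends: it simply cites \cite[Proposition~3.3]{BDL} for the details, and your sketch---test with a time-mollified $\pm\z^p(u-k)_\pm$, use the level restriction \eqref{Eq:3:3} together with the Dirichlet trace condition $(u-g)_\pm\in L^p(0,T;W^{1,p}_o(E))$ to justify admissibility on the $\partial E$-portion of the lateral boundary, then run the standard integration-by-parts and Young's inequality argument with $\mathfrak g_\pm$ in place of the $q=p-1$ analogue---is precisely that proof. (One harmless typo: in the super-solution case you want $(u-k)_-\le (u-g)_-$, not $(g-u)_-$.)
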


Finally, we deal with the energy estimates for the Neumann problem \eqref{Neumann}.
The following  can be obtained as in \cite[Proposition~3.4]{BDL}.
\begin{proposition}\label{Prop:2:4}
	Let $u$ be a  local weak sub(super)-solution to \eqref{Neumann} with \eqref{Eq:1:2p} in $E_T$.
	Assume $\pl E$ is of class $C^1$ and \eqref{N-data} holds.
	There exists a constant $\boldsymbol \gm >0$ depending on $C_o$, $C_1$, $p$ and the structure of $\pl E$, such that
 	for all cylinders $Q_{R,S} $ with the vertex $(x_o,t_o)\in S_T$,
 	every $k\in\rr$, and every non-negative, piecewise smooth cutoff function
 	$\z$ vanishing on $\pl K_{R}(x_o)\times (t_o-S,t_o)$,  there holds
 \begin{align*}
 	\max \bigg\{
	\essup_{t_o-S<t<t_o}&\int_{\{K_R(x_o)\cap E\}\times\{t\}} \zeta^p \mathfrak g_\pm(u,k)\,\dx,
	\iint_{Q_{R,S}\cap E_T} \zeta^p |D(u-k)_\pm|^p \dx\dt
	\bigg\} \\
	&\le 
	\boldsymbol\gamma \iint_{Q_{R,S}\cap E_T}\Big[ (u-k)^p_\pm|D\zeta|^p 
	+
	\mathfrak g_\pm(u,k)| \partial_t\zeta^p|\Big]\,\dx\dt\\
	&\phantom{\le\,}
	+\boldsymbol\gamma C_2^{\frac{p}{p-1}}\iint_{Q_{R,S}\cap E_T} \z^p\chi_{\{(u-k)_{\pm}>0\}}\,\dx\dt\\
	&\phantom{\le\,}
	+
	\int_{\{K_R(x_o)\cap E\}\times\{t_o-S\}} \zeta^p \mathfrak g_\pm(u,k)\,\dx.
\end{align*}
\end{proposition}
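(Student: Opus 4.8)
The statement to prove is Proposition~\ref{Prop:2:4}, the energy estimate for the Neumann problem. The plan is to follow the testing procedure used for the interior estimate in Proposition~\ref{Prop:2:1}, which in turn mirrors \cite[Proposition~3.4]{BDL}, and to track carefully the extra boundary term generated by the Neumann datum $\psi$.

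\textbf{Testing function and mollification in time.} First I would fix a cylinder $Q_{R,S}=K_R(x_o)\times(t_o-S,t_o)$ with vertex $(x_o,t_o)\in S_T$, a level $k\in\rr$, and a non-negative piecewise smooth cutoff $\z$ vanishing on $\pl K_R(x_o)\times(t_o-S,t_o)$. The formal choice of test function in the weak formulation of the Neumann problem is $\z^p(u-k)_\pm$ (with the appropriate sign convention for sub/super-solutions). Because $u$ has only $C(0,T;L^{q+1})$ time regularity and no time derivative in a strong sense, this has to be justified by a Steklov average (or a standard time mollification, e.g.\ exponential mollification), as in \cite{BDL}; I would perform the computation on the mollified level and pass to the limit at the end, choosing the time interval of integration as $(t_o-S,\tau)$ for $\tau\in(t_o-S,t_o)$ and then taking $\esssup$ over $\tau$.

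\textbf{Treatment of the three types of terms.} The parabolic term $\partial_t(|u|^{q-1}u)$ tested against $\z^p(u-k)_\pm$ produces, after integration by parts in $t$, the quantity $\mathfrak g_\pm(u,k)$ in the $\esssup$-in-time term at level $\tau$, the corresponding term at the initial slice $t_o-S$, and the term $\iint \mathfrak g_\pm(u,k)\,|\partial_t\z^p|\,\dx\dt$; this is exactly the computation underlying the definition of $\mathfrak g_\pm$ and is identical to the one in \cite{BDL} with $\boldsymbol u^{p-1}$ replaced by $\boldsymbol u^q$. The diffusion term $\dvg\bl A(x,t,u,Du)$ tested against $\z^p(u-k)_\pm$ gives, via the structure conditions \eqref{Eq:1:2p}, a good term $C_o\iint\z^p|D(u-k)_\pm|^p$ which is absorbed on the left, and a bad term bounded by $C_1\iint\z^{p-1}|D(u-k)_\pm|^{p-1}|D\z|(u-k)_\pm$, which after Young's inequality contributes $\tfrac{C_o}{2}\iint\z^p|D(u-k)_\pm|^p$ (absorbed) plus $\boldsymbol\gm\iint(u-k)_\pm^p|D\z|^p$; this is unchanged from the interior case since $\z$ still vanishes on the lateral part of $\pl K_R(x_o)$, so no boundary contribution arises from the divergence term. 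The only genuinely new term is the Neumann boundary integral $\iint_{\{K_R(x_o)\cap\pl E\}\times(t_o-S,\tau)}\psi(x,t,u)\,\z^p(u-k)_\pm\,\d\sig\dt$.

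\textbf{Estimating the Neumann term — the main point.} Using \eqref{N-data}, this boundary integral is bounded by $C_2\iint_{\{K_R(x_o)\cap\pl E\}\times(t_o-S,\tau)}\z^p(u-k)_\pm\,\d\sig\dt$, an integral over the $(N-1)$-dimensional piece of $\pl E$. To convert it into a solid integral over $Q_{R,S}\cap E_T$, I would invoke that $\pl E$ is of class $C^1$ together with a trace-type / boundary-to-interior inequality: for $C^1$ domains one has, for $v=\z^p(u-k)_\pm$ supported away from $\pl K_R(x_o)$,
\[
\int_{K_R(x_o)\cap\pl E} |v|\,\d\sig \le \boldsymbol\gm\Big(\int_{K_R(x_o)\cap E}|Dv|\,\dx + \tfrac{1}{R}\int_{K_R(x_o)\cap E}|v|\,\dx\Big),
\]
with $\boldsymbol\gm$ depending on the $C^1$-character of $\pl E$ (this is where the dependence of the final constant on the structure of $\pl E$ enters). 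Expanding $Dv$ via the product rule and using Young's inequality $ab\le \epsilon a^{p}+ C(\epsilon)b^{p/(p-1)}$ on each resulting term, together with $(u-k)_\pm\le(u-k)_\pm\chi_{\{(u-k)_\pm>0\}}$ and the elementary bound $|(u-k)_\pm|\le$ (something controlled by $\mathfrak g_\pm$ on its support is \emph{not} quite available, so instead) one absorbs the gradient piece into $\tfrac{C_o}{4}\iint\z^p|D(u-k)_\pm|^p$ and keeps the lower-order pieces as $\boldsymbol\gm\iint(u-k)_\pm^p|D\z|^p$-type terms plus a term of the form $\boldsymbol\gm C_2^{p/(p-1)}\iint\z^p\chi_{\{(u-k)_\pm>0\}}$, which is exactly the extra term in the statement. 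I expect the careful bookkeeping of the trace inequality — in particular, ensuring every term produced either is absorbed by the good energy term or already appears on the right-hand side of the asserted inequality, and that the constant depends only on the data, $C_2$, and the $C^1$-structure of $\pl E$ — to be the main obstacle; everything else is a routine adaptation of \cite[Proposition~3.4]{BDL}. Finally, taking $\esssup$ over $\tau\in(t_o-S,t_o)$ and combining with the separately obtained bound for the $\iint\z^p|D(u-k)_\pm|^p$ term yields the $\max\{\cdots\}$ form of the estimate.
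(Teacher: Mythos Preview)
Your proposal is correct and follows precisely the approach the paper intends: the paper itself gives no proof beyond the remark ``The following can be obtained as in \cite[Proposition~3.4]{BDL}'', and your outline---test with $\zeta^p(u-k)_\pm$ after time mollification, handle the parabolic and diffusion terms exactly as in the interior case, then bound the Neumann boundary integral via $|\psi|\le C_2$, a trace inequality (exploiting the $C^1$ structure of $\partial E$ and the vanishing of $\zeta$ on $\partial K_R(x_o)$), and Young's inequality to produce the $C_2^{p/(p-1)}\chi_{\{(u-k)_\pm>0\}}$ term while absorbing the gradient piece---is exactly that argument. One small refinement: after locally flattening the $C^1$ boundary and using that $\zeta$ vanishes on $\partial K_R(x_o)$, the fundamental theorem of calculus in the normal direction gives $\int_{K_R\cap\partial E}|v|\,\d\sigma\le\boldsymbol\gamma\int_{K_R\cap E}|Dv|\,\dx$ without the $R^{-1}$ lower-order term, which cleans up the bookkeeping you anticipated.
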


\section{Preliminary Tools} 
For a compact set $K\subset\rr^N$ and
 a cylinder $\mathcal{Q}:=K\times(T_1,T_2]\subset E_T$
we introduce numbers $\boldsymbol\mu^{\pm}$ and $\boldsymbol\om$ satisfying
\begin{equation*}
	\boldsymbol\mu^+\ge\essup_{\mathcal{Q}}u,
	\quad 
	\boldsymbol\mu^-\le\essinf_{\mathcal{Q}} u,
	\quad
	\boldsymbol\om\ge\boldsymbol\mu^+-\boldsymbol\mu^-.
\end{equation*}

In this section, we collect some lemmas, which will be the main ingredients
in the proof of Theorem~\ref{Thm:1:1}. 
The first one is a De Giorgi type lemma, which actually holds for all $p>1$ and $q>0$.

\begin{lemma}\label{Lm:DG:1}
 Let $u$ be a locally bounded, local weak sub(super)-solution to \eqref{Eq:1:1} -- \eqref{Eq:1:2p} in $E_T$.
 Set $\theta=(\xi\boldsymbol\om)^{q+1-p}$ for some $\xi\in(0,1)$ and assume $(x_o,t_o) + Q_\varrho(\theta) \subset \mathcal{Q}$.
There exists a constant $\nu\in(0,1)$ depending only on 
 the data, such that if
\begin{equation*}
	\Big|\Big\{
	\pm\big(\boldsymbol \mu^{\pm}-u\big)\le \xi\boldsymbol\om\Big\}\cap (x_o,t_o)+Q_{\varrho}(\theta)\Big|
	\le
	\nu|Q_{\varrho}(\theta)|,
\end{equation*}
then either
\begin{equation*}
	|\boldsymbol\mu^{\pm}|>8\xi\boldsymbol\om,
\end{equation*}
or
\begin{equation*}
	\pm\big(\boldsymbol\mu^{\pm}-u\big)\ge\tfrac{1}2\xi\boldsymbol\om
	\quad
	\mbox{a.e.~in $(x_o,t_o)+Q_{\frac{1}2\varrho}(\theta)$.}
\end{equation*}
\end{lemma}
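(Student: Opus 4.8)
The plan is to run a standard De Giorgi iteration on the backward cylinder $(x_o,t_o)+Q_\varrho(\theta)$, working with the level function $(u-k)_\pm$ for levels $k$ approaching $\boldsymbol\mu^\pm\mp\tfrac12\xi\boldsymbol\om$, and exploiting the energy estimate of Proposition~\ref{Prop:2:1}. Because the time-scaling $\theta=(\xi\boldsymbol\om)^{q+1-p}$ is intrinsic to the oscillation, one must first verify that, on the set where $\pm(\boldsymbol\mu^\pm-u)\le\xi\boldsymbol\om$, the quantity $\mathfrak g_\pm(u,k)$ is comparable to $(u-k)_\pm^2$ times an appropriate power of $|\boldsymbol\mu^\pm|$ or of $\xi\boldsymbol\om$; this is exactly where the alternative $|\boldsymbol\mu^\pm|>8\xi\boldsymbol\om$ enters. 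Concretely, I would split into the two cases $|\boldsymbol\mu^\pm|\le 8\xi\boldsymbol\om$ (the case we treat) and its negation (which is the first alternative in the conclusion, so nothing to prove there); in the treated case the relevant values of $u$ all lie in an interval of length $\lesssim \xi\boldsymbol\om$ around a point of size $\lesssim\xi\boldsymbol\om$, so $|s|^{q-1}$ appearing in the definition of $\mathfrak g_\pm$ is controlled from above by $\boldsymbol\gm(\xi\boldsymbol\om)^{q-1}$, giving $\mathfrak g_\pm(u,k)\le\boldsymbol\gm(\xi\boldsymbol\om)^{q-1}(u-k)_\pm^2$, and a matching lower bound on a suitable sub-level set. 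These two bounds are what make the parabolic scaling $\theta\rho^p$ match the energy.

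Next I would set up the iteration. Fix $k_n=\boldsymbol\mu^\pm\mp(\tfrac12+2^{-n-1})\xi\boldsymbol\om$ for $n\ge0$ (so $k_0=\boldsymbol\mu^\pm\mp\tfrac34\xi\boldsymbol\om$ and $k_n\to\boldsymbol\mu^\pm\mp\tfrac12\xi\boldsymbol\om$), shrinking cubes $K_{\rho_n}(x_o)$ with $\rho_n=\tfrac12\varrho(1+2^{-n})$, and the nested cylinders $Q_n=(x_o,t_o)+Q_{\rho_n}(\theta)$ with standard cutoff functions $\z_n$ that equal $1$ on $Q_{n+1}$, vanish on $\partial_{\mathrm{par}}Q_n$, with $|D\z_n|\le\boldsymbol\gm 2^n/\varrho$ and $|\partial_t\z_n^p|\le\boldsymbol\gm 2^{np}/(\theta\varrho^p)$. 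Applying Proposition~\ref{Prop:2:1} on $Q_n$ with level $k_n$, using the comparability of $\mathfrak g_\pm$ with $(\xi\boldsymbol\om)^{q-1}(u-k_n)_\pm^2$ to absorb the $(\xi\boldsymbol\om)^{q-1}$ against the $\theta^{-1}=(\xi\boldsymbol\om)^{p-q-1}$ from $|\partial_t\z_n^p|$ (the powers cancel, leaving $(\xi\boldsymbol\om)^{p}2^{np}/\varrho^p$ against $(u-k_n)_\pm^p\le(\xi\boldsymbol\om)^{p}$ on $A_n:=\{(u-k_n)_\pm>0\}\cap Q_n$), followed by the parabolic Sobolev embedding applied to $\z_n(u-k_n)_\pm$, I obtain the customary recursive inequality
\[
Y_{n+1}\le \boldsymbol\gm\, b^{\,n}\, Y_n^{1+\frac{1}{N}}\qquad\text{for some }b>1,
\]
where $Y_n:=|A_n|/|Q_n|$. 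The fast-geometric-convergence lemma then forces $Y_n\to0$ provided $Y_0\le\boldsymbol\gm^{-N}b^{-N^2}=:\nu$, which defines the $\nu$ of the statement; and $Y_0\le Y_{-1}$ is bounded by the hypothesis $|\{\pm(\boldsymbol\mu^\pm-u)\le\xi\boldsymbol\om\}\cap Q_\varrho(\theta)|\le\nu|Q_\varrho(\theta)|$ because $\{(u-k_0)_\pm>0\}\subset\{\pm(\boldsymbol\mu^\pm-u)<\tfrac34\xi\boldsymbol\om\}\subset\{\pm(\boldsymbol\mu^\pm-u)\le\xi\boldsymbol\om\}$ and $|Q_\varrho(\theta)|$ is comparable to $|Q_{\rho_0}(\theta)|$. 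Letting $n\to\infty$ gives $(u-k_n)_\pm\to(u-(\boldsymbol\mu^\pm\mp\tfrac12\xi\boldsymbol\om))_\pm=0$ a.e.\ on $Q_{\frac12\varrho}(\theta)$, which is precisely $\pm(\boldsymbol\mu^\pm-u)\ge\tfrac12\xi\boldsymbol\om$ a.e.\ there.

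The main obstacle is the bookkeeping of the intrinsic scaling: one must check carefully that the two terms on the right of the energy estimate scale the same way after the substitution $\theta=(\xi\boldsymbol\om)^{q+1-p}$, and for this the two-sided comparison $c(\xi\boldsymbol\om)^{q-1}(u-k_n)_\pm^2\le\mathfrak g_\pm(u,k_n)\le C(\xi\boldsymbol\om)^{q-1}(u-k_n)_\pm^2$ on the relevant set is essential — and it is exactly here that the dichotomy with $|\boldsymbol\mu^\pm|>8\xi\boldsymbol\om$ is used (it guarantees $|u|\lesssim\xi\boldsymbol\om$ wherever $(u-k_n)_\pm>0$, so the weight $|s|^{q-1}$ in $\mathfrak g_\pm$ does not blow up when $q<1$ nor degenerate badly when $q>1$). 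A secondary technical point, handled as in \cite{BDL}, is that $u$ is only in $C(0,T;L^{q+1}_{\mathrm{loc}})$ rather than having a genuine weak time derivative, so the energy estimate must be invoked in its already-integrated form from Proposition~\ref{Prop:2:1} (which absorbs the initial slice via the term on $K_{\rho_n}(x_o)\times\{t_o-\theta\rho_n^p\}$, vanishing because $\z_n$ vanishes on the parabolic boundary), and the Steklov-averaging subtleties are subsumed there. Everything else is the routine De Giorgi machinery.
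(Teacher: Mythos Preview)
Your proposal is correct and follows essentially the same route as the paper. The paper's own proof simply cites \cite[Lemma~2.2]{Liao-21} for the De Giorgi iteration and then observes that under the alternative $|\boldsymbol\mu^\pm|\le 8\xi\boldsymbol\om$ the factor $\max\{L^{q-1},M^{q-1}\}$ appearing there is controlled by $\max\{9^{q-1},1\}(\xi\boldsymbol\om)^{q-1}$, so that $\nu$ depends only on the data; you have unpacked precisely this iteration, identifying correctly that the dichotomy on $|\boldsymbol\mu^\pm|$ is what pins down the weight $|s|^{q-1}$ in $\mathfrak g_\pm$ and makes the intrinsic scaling $\theta=(\xi\boldsymbol\om)^{q+1-p}$ balance the two right-hand terms in Proposition~\ref{Prop:2:1}. (Two inconsequential slips: with your definition $k_0=\boldsymbol\mu^\pm\mp\xi\boldsymbol\om$, not $\mp\tfrac34\xi\boldsymbol\om$; and the self-improving exponent from the parabolic Sobolev embedding is $p/(N+2)$ rather than $1/N$ --- compare the computation in Lemma~\ref{Lm:5:3}.)
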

\begin{proof}
The De Giorgi iteration has been performed in \cite[Lemma~2.2]{Liao-21} for super-solutions, whereas the proof for sub-solutions is analogous.
In order to obtain the present formulation, choose $a=\frac{1}{2}$ and replace $M$ by $\xi \boldsymbol \omega$.
If $| \boldsymbol \mu^\pm | > 8 \xi \omega$, there is nothing to prove.
In the opposite case, the assumption $| \boldsymbol \mu^\pm | \leq 8 \xi \omega$ allows us to estimate $\max\{ L^{q-1}, M^{q-1} \}$ by $\max\{9^{q-1}, 1\} M^{q-1}$.
Therefore, the critical number $\nu$ depends only on the data.
%
\end{proof}

The next lemma is a variant of the previous one, involving quantitative initial data.
Again, it actually holds for all $p>1$ and $q>0$
\begin{lemma}\label{Lm:DG:initial:1}
Let $u$ be a locally bounded, local weak sub(super)-solution to \eqref{Eq:1:1} -- \eqref{Eq:1:2p} in $E_T$. 
Set $\theta=(\xi\boldsymbol\om)^{q+1-p}$ for some $\xi\in(0,1)$. 
There exists a positive constant $\nu_o$ depending only on the data, such that if
\[
\pm\big(\boldsymbol\mu^{\pm}-u(\cdot, t_o)\big)\ge \xi\boldsymbol\om,\quad\text{ a.e. in } K_{\rho}(x_o),
\]
then either
\begin{equation*}
	|\boldsymbol\mu^{\pm}|>8\xi\boldsymbol\om,
\end{equation*}
or
\[
\pm\big(\boldsymbol\mu^{\pm}-u\big)\ge \tfrac12\xi\boldsymbol\om\quad\text{ a.e. in }K_{\frac12\rho}(x_o)\times(t_o,t_o+\nu_o\theta\rho^p],
\]
provided the cylinders are included in $\mathcal{Q}$.
\end{lemma}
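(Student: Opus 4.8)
The plan is to run the same De Giorgi scheme that underlies Lemma~\ref{Lm:DG:1}, but now on \emph{forward} cylinders emanating from the slice $\{t=t_o\}$ and with \emph{time-independent} cutoff functions, so that the hypothesis forces the initial term in the energy estimate to vanish. Replacing $u$ by $-u$ if necessary, it suffices to treat super-solutions (lower sign); and if $|\boldsymbol\mu^-|>8\xi\boldsymbol\om$ there is nothing to prove, so we may assume $|\boldsymbol\mu^-|\le 8\xi\boldsymbol\om$. As in the proof of Lemma~\ref{Lm:DG:1}, this bound keeps $|u|$ and the truncation levels below $9\xi\boldsymbol\om$ on all the sub-level sets that occur, so that $\mathfrak g_-(u,k)$ is comparable to $(\xi\boldsymbol\om)^{q-1}(u-k)_-^2$ with constants depending only on the data (with a harmless loss of the form $b^{-j}$ at the $j$-th stage when $q\ge 1$, arising since $(u-k_j)_-\ge k_j-k_{j+1}$ on the relevant set).

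I would set $\rho_j:=\tfrac12\rho(1+2^{-j})$, $K_j:=K_{\rho_j}(x_o)$, $Q_j:=K_j\times(t_o,t_o+\nu_o\theta\rho^p]$ and the \emph{decreasing} levels $k_j:=\boldsymbol\mu^-+\tfrac12\xi\boldsymbol\om(1+2^{-j})$, so that $K_j\downarrow K_{\rho/2}(x_o)$ and $k_j\downarrow\boldsymbol\mu^-+\tfrac12\xi\boldsymbol\om$; and choose cutoff functions $\zeta_j=\zeta_j(x)$, independent of $t$, with $\zeta_j\equiv 1$ on $K_{j+1}$, $\zeta_j=0$ outside $K_j$, and $|D\zeta_j|\le\boldsymbol\gm\,2^j/\rho$. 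Applying Proposition~\ref{Prop:2:1} on $K_j\times(t_o,t_o+\nu_o\theta\rho^p]$, i.e.\ with vertex $(x_o,t_o+\nu_o\theta\rho^p)$ and $S=\nu_o\theta\rho^p$, two of the three right-hand side contributions drop out: $\mathfrak g_-(u,k_j)|\partial_t\zeta_j^p|\equiv 0$ since $\zeta_j$ is independent of $t$, and the initial term over $K_j\times\{t_o\}$ vanishes because $k_j\le k_0=\boldsymbol\mu^-+\xi\boldsymbol\om\le u(\cdot,t_o)$ a.e.\ in $K_\rho(x_o)\supset K_j$ by hypothesis, whence $(u(\cdot,t_o)-k_j)_-\equiv 0$. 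What remains is the clean estimate
\[
\essup_{t_o<t\le t_o+\nu_o\theta\rho^p}\int_{K_j\times\{t\}}\zeta_j^p\mathfrak g_-(u,k_j)\,\dx
+\iint_{Q_j}\zeta_j^p|D(u-k_j)_-|^p\,\dx\dt
\le\boldsymbol\gm\,(\xi\boldsymbol\om)^p\,\frac{2^{jp}}{\rho^p}\,\big|Q_j\cap\{u<k_j\}\big|.
\]

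Then I would iterate in the usual way. Writing $Y_j:=|Q_j\cap\{u<k_j\}|\,/\,|Q_j|$, applying a parabolic Sobolev inequality to $(u-k_j)_-\zeta_j^{p/2}$, controlling its $L^\infty_t(L^2_x)$-norm through the $\mathfrak g_-$-term and its gradient through the second term on the left above, and using $(u-k_j)_-\ge k_j-k_{j+1}=2^{-j-2}\xi\boldsymbol\om$ on $Q_{j+1}\cap\{u<k_{j+1}\}$, one is led to a recursion of the form
\[
Y_{j+1}\le\boldsymbol\gm\,b^{j}\,\nu_o^{p/N}\,Y_j^{1+p/N},
\]
with constants $\boldsymbol\gm,b>1$ depending only on the data. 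The whole point of the bookkeeping is that every power of $\xi\boldsymbol\om$ and of $\rho$ cancels — this is exactly what the intrinsic scaling $\theta=(\xi\boldsymbol\om)^{q+1-p}$ is designed to achieve — while a favourable factor $\nu_o^{p/N}$ is left over. Since $Y_0\le 1$ trivially, the fast geometric convergence lemma (\cite[Chapter~I, Lemma~4.1]{DB}) gives $Y_j\to 0$ as soon as $\nu_o$ is chosen small enough in terms of the data alone. Consequently $|Q_\infty\cap\{u<\boldsymbol\mu^-+\tfrac12\xi\boldsymbol\om\}|=0$, i.e.\ $u\ge\boldsymbol\mu^-+\tfrac12\xi\boldsymbol\om$ a.e.\ in $K_{\rho/2}(x_o)\times(t_o,t_o+\nu_o\theta\rho^p]$, which is the assertion; the sub-solution case is symmetric.

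The hard part is not conceptual but the careful accounting of the $\mathfrak g_\pm$-terms: one has to check that, in the regime $|\boldsymbol\mu^\pm|\le 8\xi\boldsymbol\om$, $\mathfrak g_\pm(u,k_j)$ can be traded for $(\xi\boldsymbol\om)^{q-1}(u-k_j)_\pm^2$ up to constants and powers of $2^j$ that do not spoil the iteration, and then that the exponents of $\xi\boldsymbol\om$ and of $\rho$ balance exactly in the recursion so that the critical value of $\nu_o$ ends up depending only on the data. Beyond that, the argument is a line-by-line repetition of the proof of Lemma~\ref{Lm:DG:1}.
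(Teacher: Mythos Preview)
Your approach is correct and is precisely the argument underlying the result the paper cites (\cite[Lemma~3.1]{Liao-21}): a forward-in-time De Giorgi iteration with time-independent cutoffs, so that both the $|\partial_t\zeta^p|$ term and the slice term at $t=t_o$ drop out of Proposition~\ref{Prop:2:1}, leaving only the spatial gradient term on the right. The paper itself does not reproduce this argument but simply invokes the cited lemma and explains why, under $|\boldsymbol\mu^\pm|\le 8\xi\boldsymbol\om$, the constant $\max\{L^{q-1},M^{q-1}\}$ there reduces to a universal multiple of $(\xi\boldsymbol\om)^{q-1}$, so that the time-scale comes out as $\nu_o(\xi\boldsymbol\om)^{q+1-p}$.

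One small correction: after the parabolic Sobolev embedding and H\"older's inequality, the recursion should read $Y_{j+1}\le\boldsymbol\gm\,b^{j}\,\nu_o^{p/(N+2)}\,Y_j^{1+p/(N+2)}$ rather than with exponent $p/N$; this does not affect the argument, since one still chooses $\nu_o$ small to force $Y_0\le 1$ below the convergence threshold. Your remark about the $\mathfrak g_\pm$ bookkeeping is accurate: the assumption $|\boldsymbol\mu^\pm|\le 8\xi\boldsymbol\om$ confines all levels and the truncated function to $[-9\xi\boldsymbol\om,9\xi\boldsymbol\om]$, which is exactly what makes the comparison $\mathfrak g_\pm(u,k_j)\asymp(\xi\boldsymbol\om)^{q-1}(u-k_j)_\pm^2$ go through with constants depending only on $q$.
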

\begin{proof}
After enforcing that $|\boldsymbol\mu^{-}|\le 8\xi\boldsymbol\om$,
this is essentially the content of \cite[Lemma~3.1]{Liao-21} for super-solutions, the case of sub-solutions being similar. 
More precisely, one has to choose $a=\frac{1}{2}$, replace $ M $ by $\xi\boldsymbol\omega$ and note that the constant $\max\{L^{q-1},M^{q-1}\}$ is controlled by
$\max\{ 1, 9^{q-1} \}(\xi\boldsymbol\omega)^{q-1}$ 
whenever $|\boldsymbol\mu^{-}|\le 8\xi\boldsymbol\om$. 
This allows to choose the parameter $\theta$ in \cite[Lemma~3.1]{Liao-21} in the form $\nu_o (\xi\boldsymbol\om)^{q+1-p}$ for some $\nu_o$ depending only on the data.
\end{proof}
The previous lemma propagates pointwise information in a smaller cube, without a time lag.
The next lemma translates measure theoretical information into a pointwise estimate over an expanded cube of later times.
This  is essentially the expansion of positivity for 
the degenerate, parabolic $p$-Laplacian established in \cite{DBGV-acta}; see also  \cite[Chapter~4, Proposition~4.1]{DBGV-mono}. 
As such it actually holds for  $p>2$ and $q>0$.
\begin{lemma}\label{Lm:expansion:p}
Let $u$ be a locally bounded, local, weak sub(super)-solution to \eqref{Eq:1:1} -- \eqref{Eq:1:2p} in $E_T$.
	Introduce the parameters $\Lm,\,c>0$ and $\al\in(0,1)$.
	Suppose that 
	\[
	c\boldsymbol\om\le\pm\boldsymbol \mu^{\pm}\le\Lm\boldsymbol\om
	\]
	and for some $0<a \leq \frac12c$,
	\begin{equation*}
		\Big|\Big\{\pm\big(\boldsymbol \mu^{\pm}-u(\cdot, t_o)\big)\ge a\boldsymbol\om\Big\}\cap K_{\varrho}(x_o)\Big|
		\ge
		\al |K_\varrho|.
	\end{equation*}
There exist constants $b>0$ and $\eta\in(0,1)$ depending only on the data, $\Lm$,  $c$, $a$ and $\al$, such that 
\begin{equation*}
	\pm\big(\boldsymbol \mu^{\pm}-u\big)\ge \eta \boldsymbol\om
	\quad
	\mbox{a.e.~in $K_{2\varrho}(x_o)\times\left(t_o+\tfrac12 b \boldsymbol\om^{q-1}(\eta\boldsymbol\om)^{2-p}\varrho^p,
	t_o+b\boldsymbol\om^{q-1}(\eta\boldsymbol\om)^{2-p}\varrho^p\right],$}
\end{equation*}
provided this cylinder is included in $\mathcal{Q}$.
\end{lemma}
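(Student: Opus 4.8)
The strategy is to transfer the measure-theoretic positivity assumption on the time slice $\{t=t_o\}$ into the doubly nonlinear framework by first making a change of variables that recasts the equation, on the relevant intrinsic cylinder, as one governed essentially by the degenerate parabolic $p$-Laplacian, and then invoking the expansion of positivity of \cite{DBGV-acta} (see \cite[Chapter~4, Proposition~4.1]{DBGV-mono}). The crucial observation is that under the hypotheses $c\boldsymbol\om\le\pm\boldsymbol\mu^{\pm}\le\Lm\boldsymbol\om$ and $a\le\tfrac12 c$, the truncated function $w:=\pm(\boldsymbol\mu^{\pm}-u)$ satisfies $0\le w$ on the set we care about, and on the level set $\{w\ge a\boldsymbol\om\}$ the ``density'' weight $|u|^{q-1}$ appearing in $\mathfrak g_\pm$ is comparable to $\boldsymbol\om^{q-1}$, since there $|u|$ is trapped between positive multiples of $\boldsymbol\om$. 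Concretely, whenever $w\le a\boldsymbol\om$ we have $|u|\ge(c-a)\boldsymbol\om\ge\tfrac12 c\boldsymbol\om$, and always $|u|\le(\Lm+a)\boldsymbol\om$, so $|u|^{q-1}\simeq\boldsymbol\om^{q-1}$ with constants depending only on the data, $\Lm$, $c$, $a$.

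The plan is then as follows. First I would record the energy estimate of Proposition~\ref{Prop:2:1} for $(u-k)_\pm$ with $k$ chosen near $\boldsymbol\mu^{\pm}$ (so that $w=(u-k)_\mp$ type quantities are small), and use the comparability $|u|^{q-1}\simeq\boldsymbol\om^{q-1}$ to rewrite $\mathfrak g_\pm(u,k)\simeq\boldsymbol\om^{q-1}(u-k)_\pm^2$ on the relevant super-level sets. This shows that, after the time rescaling $\tau=\boldsymbol\om^{q-1}(t-t_o)$, the function $w$ obeys exactly the De Giorgi and logarithmic-free energy inequalities that drive the $p$-Laplacian expansion of positivity, with all constants controlled by the data, $\Lm$, $c$, $a$. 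Second, I would apply the expansion of positivity for the degenerate parabolic $p$-Laplacian in this rescaled geometry: the measure lower bound $|\{w(\cdot,t_o)\ge a\boldsymbol\om\}\cap K_\varrho|\ge\al|K_\varrho|$ yields $b>0$ and $\eta\in(0,1)$, depending on the data, $\Lm$, $c$, $a$, $\al$, and a pointwise bound $w\ge\eta\boldsymbol\om$ on $K_{2\varrho}(x_o)$ over a time interval which, in the rescaled variable $\tau$, has length $\sim b(\eta\boldsymbol\om)^{2-p}\varrho^p$. Third, undoing the rescaling $t-t_o=\boldsymbol\om^{1-q}\tau$ turns this into the interval $(t_o+\tfrac12 b\boldsymbol\om^{q-1}(\eta\boldsymbol\om)^{2-p}\varrho^p,\,t_o+b\boldsymbol\om^{q-1}(\eta\boldsymbol\om)^{2-p}\varrho^p]$, which is precisely the claimed conclusion; one then checks that $\eta$ may be taken small enough that $\eta\boldsymbol\om\le a\boldsymbol\om$, so the comparability used to derive the energy inequalities is self-consistent throughout the argument.

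The main obstacle I expect is bookkeeping the intrinsic geometry carefully: one must verify that all intermediate cylinders on which the $p$-Laplacian expansion machinery is run — the shrinking cylinders in the De Giorgi step, the expanded cube $K_{2\varrho}$, and the waiting-time cylinder — remain inside $\mathcal{Q}$ after the $\boldsymbol\om^{q-1}$ time dilation, and that the comparability $|u|^{q-1}\simeq\boldsymbol\om^{q-1}$ is not merely assumed but propagated: a priori it holds on $\{w\ge a\boldsymbol\om\}$ at the initial slice, but as the positivity set expands one needs the two-sided bound $(c-a)\boldsymbol\om\le|u|\le(\Lm+a)\boldsymbol\om$ to persist, which is why the hypotheses impose $\pm\boldsymbol\mu^{\pm}$ bounded both above and below by multiples of $\boldsymbol\om$ and why the conclusion $\eta<a$ matters. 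A secondary point is that Lemma~\ref{Lm:expansion:p} is stated for $p>2$ precisely because the underlying $p$-Laplacian expansion of positivity requires the degenerate range; I would flag that the singular-range obstruction of \cite{DBGV-acta} is inherited here and do not attempt to circumvent it. Once the reduction to the $p$-Laplacian is in place, the remainder is a transcription of known results rather than new analysis.
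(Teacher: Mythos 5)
There is a genuine gap in your plan, and it sits exactly at the point the paper itself flags as delicate. You propose to verify that $w=\pm(\boldsymbol\mu^{\pm}-u)$ satisfies ``the energy inequalities that drive the $p$-Laplacian expansion of positivity'' and then to \emph{apply} the expansion of positivity of \cite{DBGV-acta}. But that result is stated and proved for non-negative weak super-solutions of a parabolic $p$-Laplacian-type \emph{equation}, not for members of a De Giorgi class defined by energy estimates; as the authors stress in Section~\ref{S:NS}, the expansion of positivity ``is only known to hold in the context of partial differential equations.'' So you cannot invoke it after merely rewriting $\mathfrak g_\pm(u,k)\simeq\boldsymbol\om^{q-1}(u-k)_\pm^2$ in Proposition~\ref{Prop:2:1}. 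What is actually needed — and what the paper does — is a change of unknown at the PDE level: set $k=\boldsymbol\mu^-+\tfrac12c\boldsymbol\om$, pass to the truncation $u_k=\min\{u,k\}$ (still a super-solution by Lemma~\ref{Lm:A:1}), and define $v=\boldsymbol{u_k}^q-\boldsymbol{(\mu^-)}^q$. This substitution linearizes the time term $\partial_t\boldsymbol{u}^q$ into $\partial_t v$ and produces a genuine non-negative weak super-solution of an equation with a modified vector field $\bar{\bl A}$ satisfying \eqref{Eq:1:2p}-type bounds with constants $\overline C_i\boldsymbol\om^{(q-1)(1-p)}$; a time dilation by $\boldsymbol\om^{(q-1)(p-1)}$ (not $\boldsymbol\om^{q-1}$, as you wrote — the extra power is absorbed later by the intrinsic scaling of the $p$-Laplacian result) then puts it in the exact hypotheses of \cite[Chapter~4, Proposition~4.1]{DBGV-mono}.

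The second problem is your treatment of the comparability $|u|^{q-1}\simeq\boldsymbol\om^{q-1}$. As you yourself observe, this holds a priori only where $u$ stays away from zero; since we only know $\boldsymbol\mu^-\le u\le\boldsymbol\mu^+$ and $\boldsymbol\mu^+$ may be positive, $u$ can approach or cross zero inside the cylinder, where $|u|^{q-1}$ degenerates or blows up. Your proposed remedy — that the two-sided bound ``persists as the positivity set expands'' because $\eta<a$ — is circular: the expansion of the positivity set is the conclusion you are trying to establish, so it cannot be used to validate the structure conditions needed in its own proof. The paper's truncation $u_k=\min\{u,\boldsymbol\mu^-+\tfrac12c\boldsymbol\om\}$ removes this issue cleanly: $u_k$ satisfies $-\Lm\boldsymbol\om\le u_k\le-\tfrac12c\boldsymbol\om$ \emph{everywhere}, so $|u_k|^{q-1}\simeq\boldsymbol\om^{q-1}$ holds unconditionally, the hypothesis $a\le\tfrac12 c$ guarantees the initial measure information survives the truncation, and the final lower bound for $u_k$ transfers to $u\ge u_k$. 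With those two devices in place, the rest of your outline (mean-value-theorem conversions of the measure information and of the conclusion, and the final relabeling of $b$ and $\eta$) matches the paper's argument.
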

\begin{proof}
We may assume $(x_o,t_o)=(0,0)$ and prove the case of super-solutions only as the other case is similar.
Let $k=\boldsymbol\mu^-+\frac12c\boldsymbol\om$.
By Lemma~\ref{Lm:A:1}, $u_k:=\min\{u, k\}=k-(u-k)_-$ is a local, weak super-solution to \eqref{Eq:1:1}, i.e.
\[
\pl_t \boldsymbol{u_k}^q-\dvg {\bf A}(x,t,u_k,Du_k)\ge0\quad\text{ weakly in }\mathcal{Q}.
\]
Here the symbol $\boldsymbol{u_k}^q$ has been emboldened to denote the signed power of $u_k$ defined in Section~\ref{S:energy}.
To proceed, we define
\[
v:=\boldsymbol{u_k}^q-\boldsymbol{(\mu^-)}^q,
\]
which is non-negative in $\mathcal{Q}$.
Thanks to the restriction on $\boldsymbol\mu^-$, it is not hard to show that
$v$ belongs to the function space \eqref{Eq:1:3p}$_{q=1}$ defined on $\mathcal{Q}$
 and satifies  
\[
v_t-\dvg{\bf \bar{A}}(x,t,v,Dv)\ge0\quad\text{ weakly in }\mathcal{Q}.
\]
Here ${\bf \bar{A}}$ is defined by
\[
	{\bf \bar{A}}(x,t,y,\zeta)
	:=
	{\bf A}\Big(x,t,\big| \widetilde{y} + \boldsymbol{(\mu^-)}^q\big|^{\frac{1-q}q}\big( \widetilde{y} + \boldsymbol{(\mu^-)}^q\big),
	\tfrac1q \big| \widetilde{y}+\boldsymbol{(\mu^-)}^q\big|^\frac{1-q}q \zeta\Big),
\]
where $\widetilde{y}$ denotes the truncation
$$
	\widetilde{y}
	:=
	\min\big\{ \max\{ y, 0 \}, \big(1-\tfrac{1}{2^q}\big) (c\boldsymbol\omega)^q \big\}.
$$
Meanwhile, one verifies that the structure conditions
\begin{equation*}
	\left\{
	\begin{array}{c}
		{\bf \bar{A}}(x,t,y,\zeta)\cdot \zeta\ge\overline C_o\boldsymbol\omega^{(q-1)(1-p)}|\zeta|^p \\[5pt]
		|{\bf \bar{A}}(x,t,y,\zeta)|\le \overline{C}_1 \boldsymbol\omega^{(q-1)(1-p)} |\zeta|^{p-1},%
	\end{array}
	\right .
\end{equation*}
with positive constants $\overline C_i =\overline C_i(C_i,p,q,c,\Lambda),\, i=0,1$.

In order to eliminate the dependence on $\boldsymbol \omega$ in the structure conditions of ${\bf \bar{A}}$, we consider the transformed function
\[
\widetilde{v}(x,t):=v\big(x,\boldsymbol\om^{(q-1)(p-1)}t\big),
\]
which satisfies
\begin{equation}\label{PDE-tilde}
\widetilde{v}_t-\dvg{\bf \widetilde{A}}(x,t,\widetilde{v},D\widetilde{v}) \ge 0
\quad \text{ weakly in }
K \times \big(\boldsymbol\om^{(q-1)(1-p)} T_1, \boldsymbol\om^{(q-1)(1-p)} T_2\big].
\end{equation}
Here ${\bf \widetilde{A}}$ is defined by
$$
	{\bf \widetilde{A}}(x, t, y , \zeta)
	:=
	\boldsymbol\om^{(q-1)(p-1)} {\bf \bar{A}}(x, \boldsymbol\om^{(q-1)(p-1)} t, y , \zeta)
$$
for $(x, t) \in \widetilde{\mathcal Q}:= K \times (\boldsymbol\om^{(q-1)(1-p)} T_1, \boldsymbol\om^{(q-1)(1-p)} T_2]$ and all $(y,\zeta) \in \rr \times \rn$.
Thus, an easy calculation shows that ${\bf \widetilde{A}}$ satisfies the conditions
\begin{equation*}
	\left\{
	\begin{array}{c}
		{\bf \widetilde{A}}(x,t,y,\zeta)\cdot \zeta\ge \overline{C}_o|\zeta|^p \\[5pt]
		|{\bf \widetilde{A}}(x,t,y,\zeta)|\le \overline{C}_1|\zeta|^{p-1}.%
	\end{array}
	\right .
\end{equation*}
In other words, the function $\widetilde{v}$ is a non-negative, local, weak super-solution
to the parabolic $p$-Laplacian type equation \eqref{PDE-tilde} in $\widetilde{\mathcal Q}$. This allows us to apply the expansion of positivity
in  \cite[Chapter~4, Proposition~4.1]{DBGV-mono}. The measure theoretical information for $u$
implies a similar inequality for $u_k$; in fact we have
$$
		\Big|\Big\{u_k(\cdot, 0)\ge \boldsymbol \mu^{-}+ a\boldsymbol\om\Big\}\cap K_{\varrho}\Big|
				\ge
		\al |K_\varrho|.
$$
Taking into account $-\Lambda\boldsymbol\omega\le u_k\le -\frac12 c\boldsymbol\omega$, the information that
$u_k(\cdot, 0)\ge \boldsymbol \mu^{-}+ a\boldsymbol\om$ can be converted into an estimate from below for $v$. Indeed,
by the  mean value theorem, we estimate
\begin{align*}
	v(\cdot ,0)&= \boldsymbol{u_k}^q(\cdot ,0)-(\boldsymbol \mu^-)^q
	\ge
	q\min\big\{\Lm^{q-1}, (\tfrac12c)^{q-1}\big\}\boldsymbol\omega^{q-1}\big( u_k(\cdot ,0)-\boldsymbol\mu^-)\\
	&\ge 
	aq\min\big\{\Lm^{q-1}, (\tfrac12c)^{q-1}\big\}\boldsymbol\omega^q
	=:
	\widetilde a\boldsymbol\omega^q,
\end{align*}
In terms of $\widetilde{v}$ this becomes
	\begin{equation*}
		\Big|\Big\{\widetilde{v}(\cdot, 0)\ge \widetilde{a}\boldsymbol\om^q\Big\}\cap K_{\varrho}\Big|
		\ge
		\al |K_\varrho|.
	\end{equation*}
An application of  \cite[Chapter~4, Proposition~4.1]{DBGV-mono} to $\widetilde{v}$ (with $C\equiv 0$ and $M=\widetilde a\boldsymbol\om^q$)
yields that for some positive constants $\eta,\delta\in (0,1)$ and $b>1$ depending only on the data $\overline C_o, \overline C_1, p, N$
and on $\al$, such that 
\begin{equation*}
	\widetilde v(\cdot ,t)\ge \eta\widetilde a \boldsymbol \om^q
	\quad\mbox{a.e.~in $K_{2\rho}$}
\end{equation*}
for all
\begin{equation*}
	\frac{b^{p-2}}{2 (\eta\widetilde a \boldsymbol\om^q)^{p-2}} \delta\rho^p
	< t\le
	\frac{b^{p-2}}{(\eta\widetilde a \boldsymbol\om^q)^{p-2}}\delta\rho^p.
\end{equation*}
For $v$ this means that 
\begin{equation*}
	v(\cdot ,t)\ge \eta\widetilde a \boldsymbol\om^q
	\quad\mbox{a.e.~in $K_{2\rho}$}
\end{equation*}
for all $t$ in the interval
\begin{equation*}
	\tfrac12 b^{p-2} \delta (\eta \widetilde{a} \boldsymbol\om)^{2-p} \boldsymbol\om^{q-1} \rho^p
	< t\le
	b^{p-2} \delta (\eta \widetilde{a} \boldsymbol\om)^{2-p} \boldsymbol\om^{q-1}\rho^p.
\end{equation*}
We revert to the original function $u$ with the aid of  the mean value theorem. More precisely, we estimate
\begin{align*}
	\eta\widetilde a \boldsymbol\om^q
	&\le v =\boldsymbol{u_k}^q-(\boldsymbol \mu^-)^q
	\le q\max\big\{\Lm^{q-1}, (\tfrac12c)^{q-1}\big\}\boldsymbol\omega^{q-1} \big(u_k-\boldsymbol \mu^-\big)
	\le\widetilde{\boldsymbol\gamma} \boldsymbol\om^{q-1}\big(u-\boldsymbol \mu^-\big)
\end{align*}
for some positive $\widetilde{\boldsymbol\gamma} = \widetilde{\boldsymbol\gamma}(q,c,\Lambda)$. This, however, is equivalent to
\begin{equation*}
	u(\cdot ,t)\ge \boldsymbol \mu^-+\frac{\eta\widetilde a}{\widetilde{\boldsymbol\gamma}} \,\boldsymbol\om
	\quad\mbox{a.e.~in $K_{2\rho}$}
\end{equation*}
for all $t$ in the above interval.
Redefining $\eta\widetilde a/\widetilde{\boldsymbol\gamma}$ as $\eta$ and $\widetilde{\boldsymbol\gamma}^{2-p} b^{p-2}\delta$
as $b$, the claim follows.
\end{proof}
\begin{remark}
\label{rem:smaller eta}\upshape
An inspection of the above proof shows that $\eta=\boldsymbol\gm a$ for some positive $\boldsymbol\gm$ depending only on the data, $\al$, $c$ and $\Lm$.
The conclusion of Lemma \ref{Lm:expansion:p} holds true for a smaller $ \eta$ by properly making $a$ smaller.
\end{remark}
%
The following lemma examines the situation when pointwise information is given at the initial level.
It actually holds for $p>2$ and $q>0$.
\begin{lemma}\label{Lm:DG:initial:2}
Let $u$ be a locally bounded, local weak sub(super)-solution to \eqref{Eq:1:1} -- \eqref{Eq:1:2p} in $E_T$. 
Introduce the parameters $\Lm,\,c>0$ and $\eta\in(0,1)$,
and set $\theta=\boldsymbol\om^{q-1}(\eta\boldsymbol\om)^{2-p}$. 
	Suppose that 
	\[
	c\boldsymbol\om\le\pm\boldsymbol \mu^{\pm}\le\Lm\boldsymbol\om.
	\]
There exists a positive constant $\nu_1$ depending only on the data, $c$ and $\Lm$, such that if
\[
\pm\big(\boldsymbol\mu^{\pm}-u(\cdot, t_o)\big)\ge \eta\boldsymbol\om,\quad\text{ a.e. in } K_{\rho}(x_o),
\]
then 
\[
\pm\big(\boldsymbol\mu^{\pm}-u\big)\ge \tfrac12\eta\boldsymbol\om\quad\text{ a.e. in }K_{\frac12\rho}(x_o)\times(t_o,t_o+\nu_1\theta\rho^p],
\]
provided the cylinders are included in $\mathcal{Q}$.
\end{lemma}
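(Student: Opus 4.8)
The plan is to mimic the structure of the proof of Lemma~\ref{Lm:expansion:p}: reduce to a $p$-Laplacian type problem via the substitution $v:=\boldsymbol{u_k}^q-\boldsymbol{(\mu^-)}^q$ and the time rescaling $\widetilde v(x,t):=v(x,\boldsymbol\om^{(q-1)(p-1)}t)$, and then invoke the known De Giorgi lemma with quantitative initial data for the parabolic $p$-Laplacian. I would treat only the super-solution case, the sub-solution case being symmetric, and assume $(x_o,t_o)=(0,0)$. First I would set $k:=\boldsymbol\mu^-+\frac12 c\boldsymbol\om$ (or, since the hypothesis already gives $\pm(\boldsymbol\mu^\pm-u(\cdot,0))\ge\eta\boldsymbol\om$ in $K_\rho$, one may take $k$ of the form $\boldsymbol\mu^-+\eta\boldsymbol\om$) and note, via Lemma~\ref{Lm:A:1}, that $u_k:=\min\{u,k\}$ is again a local weak super-solution. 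Exactly as in Lemma~\ref{Lm:expansion:p}, the function $v=\boldsymbol{u_k}^q-\boldsymbol{(\mu^-)}^q$ is non-negative, lies in the right function space, and solves $v_t-\dvg\bar{\bf A}(x,t,v,Dv)\ge0$ weakly, where $\bar{\bf A}$ obeys the structure conditions with constants $\overline C_i\boldsymbol\om^{(q-1)(1-p)}$; the rescaled $\widetilde v$ then solves a genuine $p$-Laplacian type inequality with structure constants $\overline C_o,\overline C_1$ independent of $\boldsymbol\om$, on the cylinder $\widetilde{\mathcal Q}$.

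Next I would convert the pointwise hypothesis on $u$ into one on $\widetilde v$. Since $-\Lm\boldsymbol\om\le u_k\le-\frac12 c\boldsymbol\om$, the mean value theorem gives, for a.e.\ $x\in K_\rho$,
\[
v(x,0)=\boldsymbol{u_k}^q(x,0)-\boldsymbol{(\mu^-)}^q\ge q\min\{\Lm^{q-1},(\tfrac12 c)^{q-1}\}\boldsymbol\om^{q-1}\big(u_k(x,0)-\boldsymbol\mu^-\big)\ge \widetilde a\,\boldsymbol\om^{q-1}(\eta\boldsymbol\om),
\]
with $\widetilde a:=q\min\{\Lm^{q-1},(\tfrac12 c)^{q-1}\}$ depending only on the data, $c$, $\Lm$; in terms of $\widetilde v$ this reads $\widetilde v(\cdot,0)\ge \widetilde a\,\boldsymbol\om^{q-1}(\eta\boldsymbol\om)$ a.e.\ in $K_\rho$. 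Now I apply the De Giorgi lemma with quantitative initial datum for the parabolic $p$-Laplacian, i.e.\ the $q=1$ analogue of Lemma~\ref{Lm:DG:initial:1} (equivalently \cite[Lemma~3.1]{Liao-21} with $q=1$, which is the classical lemma for the degenerate $p$-Laplacian, see \cite[Chapter~4]{DBGV-mono}). This yields a constant $\nu_o\in(0,1)$, depending only on $\overline C_o,\overline C_1,p,N$, such that
\[
\widetilde v(\cdot,t)\ge \tfrac12\widetilde a\,\boldsymbol\om^{q-1}(\eta\boldsymbol\om)\quad\text{a.e.\ in }K_{\frac12\rho}\times\big(0,\nu_o\,(\widetilde a\,\boldsymbol\om^{q-1}(\eta\boldsymbol\om))^{2-p}(\tfrac12\rho)^p\,\big].
\]
Undoing the time rescaling replaces the time length by a multiple of $\boldsymbol\om^{(q-1)(p-1)}\cdot\boldsymbol\om^{(q-1)(2-p)}(\eta\boldsymbol\om)^{2-p}\rho^p=\boldsymbol\om^{q-1}(\eta\boldsymbol\om)^{2-p}\rho^p=\theta\rho^p$, so $v(\cdot,t)\ge\tfrac12\widetilde a\,\boldsymbol\om^{q-1}(\eta\boldsymbol\om)$ a.e.\ in $K_{\frac12\rho}\times(0,\nu_1\theta\rho^p]$ for a suitable $\nu_1=\nu_1(\text{data},c,\Lm)$.

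Finally I revert to $u$. Using $-\Lm\boldsymbol\om\le u_k\le-\frac12 c\boldsymbol\om$ and the mean value theorem in the opposite direction,
\[
\tfrac12\widetilde a\,\boldsymbol\om^{q-1}(\eta\boldsymbol\om)\le v=\boldsymbol{u_k}^q-\boldsymbol{(\mu^-)}^q\le q\max\{\Lm^{q-1},(\tfrac12 c)^{q-1}\}\boldsymbol\om^{q-1}\big(u_k-\boldsymbol\mu^-\big)\le \widetilde{\boldsymbol\gm}\,\boldsymbol\om^{q-1}\big(u-\boldsymbol\mu^-\big),
\]
whence $u-\boldsymbol\mu^-\ge\frac{\widetilde a}{2\widetilde{\boldsymbol\gm}}\eta\boldsymbol\om$ a.e.\ in $K_{\frac12\rho}\times(0,\nu_1\theta\rho^p]$; absorbing the constant $\widetilde a/(2\widetilde{\boldsymbol\gm})\in(0,1)$ by shrinking $\eta$ (or, more honestly, noting that the statement is about $\tfrac12\eta\boldsymbol\om$, so one checks $\widetilde a/\widetilde{\boldsymbol\gm}\ge 1$ is not automatic and instead one tracks the constant and relabels $\nu_1$) gives the claimed $\pm(\boldsymbol\mu^\pm-u)\ge\tfrac12\eta\boldsymbol\om$. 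The main obstacle I anticipate is purely bookkeeping: keeping the chain of mean-value-theorem estimates and the two rescalings consistent so that the final time length is exactly a data-dependent multiple of $\theta\rho^p=\boldsymbol\om^{q-1}(\eta\boldsymbol\om)^{2-p}\rho^p$, and making sure the constant lost in passing from $\widetilde a\eta$ back to $\eta$ is absorbed either into $\nu_1$ or by a harmless relabeling, rather than into $\eta$ itself (since $\eta$ is a given parameter here, not one we are free to choose, unlike in Lemma~\ref{Lm:expansion:p}). This is why the cleanest route is to carry the factor through and let it be swallowed by $\nu_1$ after one observes that a smaller time length only strengthens the conclusion.
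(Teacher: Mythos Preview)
Your overall strategy is exactly the paper's: truncate, pass to $v=\boldsymbol{u_k}^q-\boldsymbol{(\mu^-)}^q$, time-rescale to $\widetilde v$, apply a $p$-Laplacian initial-data lemma, and convert back via the mean value theorem. The set-up and the two mean-value estimates are essentially identical to what the paper writes.

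The gap is precisely the one you flagged at the end, and your proposed patch does not close it. After the back-conversion you obtain $u-\boldsymbol\mu^-\ge \frac{\widetilde a}{2\widetilde{\boldsymbol\gm}}\,\eta\boldsymbol\om$, and in general $\widetilde a/\widetilde{\boldsymbol\gm}=\min\{\Lambda^{q-1},(\tfrac12 c)^{q-1}\}/\max\{\Lambda^{q-1},(\tfrac12 c)^{q-1}\}\le 1$, so the constant can be strictly smaller than $\tfrac12$. Shrinking $\nu_1$ cannot repair this: making the time interval smaller does not improve the pointwise lower bound, and $\eta$ is a given parameter in the statement, not a free one you may redefine. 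So ``swallowing the constant into $\nu_1$'' is not a valid move here.

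The paper resolves this by invoking the $p$-Laplacian initial-data lemma in its form with a \emph{free reduction parameter} $a\in(0,1)$ (it cites \cite[Chapter~3, Lemma~4.1]{DBGV-mono} or \cite[Lemma~3.2]{Liao-21}): from $\widetilde v(\cdot,0)\ge M$ one gets $\widetilde v\ge aM$ on $K_{\frac12\rho}\times\big(0,\bar c(1-a)^{N+3}M^{2-p}(\tfrac12\rho)^p\big]$. After the back-conversion this yields $u-\boldsymbol\mu^-\ge \frac{a\boldsymbol\gamma}{\widetilde{\boldsymbol\gm}}\eta\boldsymbol\om$, and one then \emph{chooses} $a$ so that $a\boldsymbol\gamma/\widetilde{\boldsymbol\gm}=\tfrac12$; this choice fixes $\nu_1$ in terms of the data, $c$ and $\Lambda$. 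In other words, the freedom that absorbs the mean-value-theorem losses lives in the $p$-Laplacian lemma, not in $\nu_1$ or $\eta$. If you keep $a$ free in your step~4 instead of fixing it to $\tfrac12$, your argument goes through verbatim and matches the paper's proof.
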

\begin{proof}
Suppose $u$ is a local, weak super-solution as the other case is similar. Moreover, we may assume $(x_o,t_o)=(0,0)$. 
Introduce $\widetilde{v}$ like in the proof of Lemma~\ref{Lm:expansion:p}, which turns out to be a non-negative, local, weak super-solution
to the parabolic $p$-Laplacian type equation \eqref{PDE-tilde} in $\widetilde{\mathcal Q}$.
Using the mean value theorem, the information that $u(\cdot ,0)\ge \boldsymbol\mu^-+\eta\boldsymbol\om$ in $K_\rho$ yields that
$\widetilde v(\cdot, 0)\ge \boldsymbol\gamma \eta\boldsymbol\om^q$ in $K_\rho$ for some positive 
$\boldsymbol\gamma = \boldsymbol\gamma(q, c, \Lm)$.
Consequently, we may apply \cite[Chapter~3, Lemma~4.1]{DBGV-mono} or \cite[Lemma~3.2]{Liao-21} to $\widetilde{v}$. For $a\in (0,1)$ at our disposal we have
\begin{equation*}
	\widetilde v\ge a \boldsymbol\gamma \eta\boldsymbol\om^q
	\quad
	\mbox{a.e.~on $K_{\frac12\rho}\times \big(0, \vartheta \big(\frac12\rho)^p\big]$,}
\end{equation*}
where
$$
	\vartheta = \bar c (1-a)^{N+3}\big(\boldsymbol\gamma\eta\boldsymbol\om^q\big)^{2-p},
$$
for some constant $\bar c\in (0,1)$ depending on $\overline C_o,\overline C_1, p,N$.
As in  the proof of Lemma~\ref{Lm:expansion:p} we convert this into an estimate for $u$. First, the scaling in time
gives
$$
	\frac{\vartheta}{2^p\boldsymbol\om^{(q-1)(1-p)}}
	=2^{-p}\bar c (1-a)^{N+3} \boldsymbol\gamma^{2-p}
	\boldsymbol\omega^{q-1}(\eta\boldsymbol\omega)^{2-p}
	 = 2^{-p}\bar c (1-a)^{N+3} \boldsymbol\gamma^{2-p}\theta=:\nu_1\theta,
$$
so that $v\ge a \boldsymbol\gamma \eta\boldsymbol\om^q$ on $K_{\frac12\rho}\times \big(0, \nu_1\theta \rho^p\big]$.
Note that $\nu_1$ depends on $\overline C_o,\overline C_1, p,q,N,c,\Lambda$ and $a$.
As in the proof of Lemma~\ref{Lm:expansion:p}, we may apply  the mean value theorem to estimate
$$
	a \boldsymbol\gamma \eta \boldsymbol\om^q
	\leq v \leq
	\widetilde{\boldsymbol\gamma} \boldsymbol{\omega}^{q-1}
	\big( u - \boldsymbol\mu^-\big)
$$
for some positive $\widetilde{\boldsymbol\gamma} = \widetilde{\boldsymbol\gamma}(q, c, \Lm)$ and therefore on $K_{\frac12\rho}\times \big(0, \nu_1\theta \rho^p\big]$ there holds
\begin{align*}
	u\ge \boldsymbol\mu^-+ \frac{a\boldsymbol\gamma}{\widetilde{\boldsymbol\gamma}}\eta\boldsymbol\om.
\end{align*}

Finally, choosing the free parameter $a$ such that $a\boldsymbol\gamma/\widetilde{\boldsymbol\gamma}=1/2$ on the one hand determines the value of  $\nu_1$ in dependence on the data, $c$ and $\Lambda$, and on the other hand
implies the desired bound from below. 
\end{proof}
\section{The First Proof of Theorem \ref{Thm:1:1}}\label{S:5}
The proof of Theorem \ref{Thm:1:1} in this section relies on the expansion of positivity from Lemma~\ref{Lm:expansion:p}.
This important tool simplifies our arguments, though the attainment of it is difficult and 
turned out to be a major achievement in the recent theory, cf.~\cite{DBGV-acta, DBGV-mono}.
As such the same simplification can be carried out in \cite{BDL}.
On the other hand, the argument of this section does not seem applicable directly to the boundary regularity for the Neumann problem.
For this reason, we will give a second proof of Theorem \ref{Thm:1:1} in Section~\ref{S:6},
referring back to our previous arguments in \cite{BDL} that are modeled on \cite{DB86}.

\subsection{The Proof Begins}
\label{S:5:1}
Assume $(x_o,t_o)=(0,0)$,
introduce $Q_o=K_\rho\times(-\rho^{p-1},0]\Subset E_T$ with a radius $\rho \leq 1$ and
set
\begin{equation*}
	\boldsymbol \mu^+=\essup_{Q_o}u,
	\quad
	\boldsymbol\mu^-=\essinf_{Q_o}u,
	\quad
	\boldsymbol\om\ge\boldsymbol\mu^+-\boldsymbol\mu^-.
\end{equation*}
Let $\theta=(\frac14\boldsymbol\om)^{q+1-p}$. For some $A>1$ to be determined in terms of the data,
we may assume that
\begin{equation}\label{Eq:start-cylinder}
Q_\rho(A\theta)\subset Q_o,\quad\text{ such that }\quad \essosc_{Q_{\rho}(A\theta)}u\le\boldsymbol\om;
\end{equation}
otherwise we would have 
\begin{equation}\label{Eq:extra-control}
\boldsymbol\om\le L\rho^{\frac1{p-q-1}}\quad\text{ where }L=4A^{\frac1{p-q-1}}.
\end{equation}

Our proof unfolds along two main
cases, namely for some $\xi\in(0,1)$ to be determined,
\begin{equation}\label{Eq:Hp-main}
\left\{
\begin{array}{c}
\mbox{when $u$ is near zero:  $\boldsymbol\mu^-\le\xi\boldsymbol\om$ and 
$\boldsymbol\mu^+\ge-\xi\boldsymbol\om$};\\[5pt]
\mbox{when $u$ is away from zero: $\boldsymbol\mu^->\xi\boldsymbol\om$ or $\boldsymbol\mu^+<-\xi\boldsymbol\om$.}
\end{array}\right.
\end{equation}
Note that \eqref{Eq:Hp-main}$_1$ implies that 
$|\boldsymbol\mu^\pm|\le2\boldsymbol\om$.
We deal with this case in Sections~\ref{S:case-1} -- \ref{S:case-1-3};
the other case will be treated in Section~\ref{S:case-2}.
\subsection{Reduction of Oscillation Near Zero--Part I}\label{S:case-1}
In this section, we will assume that \eqref{Eq:Hp-main}$_1$ holds true.
We work with $u$ as a super-solution near its
infimum.
To proceed further, we assume 
\begin{equation}\label{Eq:mu-pm-}
	\boldsymbol\mu^+ -\boldsymbol\mu^- >\tfrac12\boldsymbol\om.
\end{equation}
The other case $\boldsymbol\mu^+ -\boldsymbol\mu^- \le\frac12\boldsymbol\om$,
will be considered later.
Observe that \eqref{Eq:mu-pm-} implies 
\begin{equation}\label{Eq:mu-pm}
	\boldsymbol\mu^+ - \tfrac18 \boldsymbol\om \ge \tfrac18 \boldsymbol\om
	\quad\text{ or }\quad
	\boldsymbol\mu^- + \tfrac18 \boldsymbol\om\le -\tfrac18 \boldsymbol\om.
\end{equation}
Let us consider for instance the first case, i.e. \eqref{Eq:mu-pm}$_1$, as the other one can be treated analogously.
Hence we have $\frac14 \boldsymbol\om \le \boldsymbol\mu^+ \le 2\boldsymbol\om$ and Lemma~\ref{Lm:expansion:p} is at our disposal with $c=\frac{1}{4}$ and $\Lambda = 2$.

Suppose $A$ is a large number, and consider the ``bottom" sub-cylinder of $Q_\rho(A\theta)$, that is,
$$\widetilde{Q}:=K_{\rho}\times\big(-A\theta\rho^p,-(A-1)\theta\rho^p\big].$$
One of the following two alternatives must hold true:
\begin{equation}\label{Eq:alternative-int}
\left\{
\begin{array}{cc}
\Big|\Big\{u \le \boldsymbol \mu^- + \tfrac14 \boldsymbol\om\Big\}\cap \widetilde{Q}\Big|\le \nu|\widetilde{Q}|,\\[5pt]
\Big|\Big\{u \le \boldsymbol \mu^- + \tfrac14 \boldsymbol\om\Big\}\cap \widetilde{Q}\Big|> \nu |\widetilde{Q}|.
\end{array}\right.
\end{equation}
Here the number $\nu\in(0,1)$ is determined in Lemma~\ref{Lm:DG:1} in terms of the data.

First suppose \eqref{Eq:alternative-int}$_1$ holds true. An application of Lemma~\ref{Lm:DG:1} (with $\xi=\frac14$)
gives us that, recalling $|\boldsymbol\mu^-|\le2\boldsymbol\om$ due to \eqref{Eq:Hp-main}$_1$,
\begin{equation}\label{Eq:initial-condi}
u\ge \boldsymbol\mu^-+\tfrac18\boldsymbol\om\quad\text{ a.e. in }\tfrac12\widetilde{Q}.
\end{equation}
Here the notation $\tfrac12\widetilde{Q}$ should be self-explanatory in view of Lemma~\ref{Lm:DG:1}.
In particular, the above pointwise lower bound of $u$ holds at the time level $t_o=-(A-1)\theta\rho^p$ for a.e.~$x \in K_{\frac{1}{2} \rho}$,
which serves as the initial datum for an application of Lemma~\ref{Lm:DG:initial:1}.
Indeed, we fix $\nu_o$ in Lemma~\ref{Lm:DG:initial:1} depending on the data and choose $\xi \in \big( 0, \frac{1}{8} \big)$ so small that
$$
	0 \leq
	t_o + \nu_o (\xi \boldsymbol \omega)^{q+1-p} \big( \tfrac{1}{2}\rho \big)^p
	=
	- (A-1) \big( \tfrac14 \boldsymbol \omega \big)^{q+1-p} \rho^p
	+ \nu_o (\xi \boldsymbol \omega)^{q+1-p} \big(\tfrac{1}{2}\rho \big)^p,
$$
i.e.~we choose
\begin{equation}\label{Eq:choice-xi}
	\xi = \min \Big\{ \tfrac{1}{8}\,, \,\tfrac14 \Big( \frac{\nu_o}{2^p A} \Big)^{\frac1{p-q-1}} \Big\}.
\end{equation}
Thus, enforcing $|\boldsymbol\mu^-|\le\xi\boldsymbol\om$, we obtain that
\[
u\ge\boldsymbol\mu^-+\tfrac12\xi\boldsymbol\om\quad\text{ a.e. in }K_{\frac14\rho}\times (t_o,0],
\]
which in turn yields a reduction of oscillation
\begin{equation}\label{Eq:reduc-osc-int-1}
\essosc_{Q_{\frac14\rho}(\theta)}u\le \big( 1-\tfrac12\xi \big)\boldsymbol\om.
\end{equation}
Here in \eqref{Eq:choice-xi} we have tacitly used the fact that $q<p-1$ in the determination of $\xi$.
Keep also in mind that $A>1$ is yet to be determined in terms of the data.

The case $\boldsymbol\mu^->\xi\boldsymbol\om$ will be treated in Section~\ref{S:case-2};
whereas if $-2\boldsymbol\om<\boldsymbol\mu^-<-\xi\boldsymbol\om$, 
we may apply Lemma~\ref{Lm:DG:initial:2}
with $c=\xi$, $\Lambda=2$ and $\eta = \eta_o \in \big( 0,\frac{1}{8} \big)$.
Indeed, fixing $\nu_1$ in Lemma~\ref{Lm:DG:initial:2} depending on the data and $\xi$, we choose $\eta_o$ to satisfy 
$$
	\nu_1 \boldsymbol\om^{q-1}(\eta_o\boldsymbol\om)^{2-p} \big( \tfrac{1}{2}\rho \big)^p
	\ge
	A \big( \tfrac14 \boldsymbol\om \big)^{q+1-p} \rho^p,
	\quad \text{ i.e. } \quad
	\eta_o = \min \Big\{ \tfrac{1}{8}, 4^{\frac{p-q-1}{p-2}} \Big(\frac{\nu_1}{2^p A} \Big)^{\frac1{p-2}} \Big\}.
$$
Here we have tacitly used the fact that $p>2$ in the determination of $\eta_o$.
In this way, Lemma~\ref{Lm:DG:initial:2} asserts that
\[
u\ge\boldsymbol\mu^-+\tfrac12\eta_o\boldsymbol\om\quad\text{ a.e. in }K_{\frac14\rho}\times (t_o,0],
\]
which yields  the reduction of oscillation
\begin{equation}\label{Eq:reduc-osc-int-3}
\essosc_{Q_{\frac14\rho}(\theta)}u\le \big( 1-\tfrac12\eta_o \big)\boldsymbol\om.
\end{equation}
\subsection{Reduction of Oscillation Near Zero--Part II}\label{S:case-1-2}
In this section, we still assume that \eqref{Eq:Hp-main}$_1$ and \eqref{Eq:mu-pm-} hold true.
However, we turn our attention to the second alternative \eqref{Eq:alternative-int}$_2$.
We work with $u$ as a sub-solution near its
supremum.
Since under our assumptions there holds $\boldsymbol\mu^+-\frac14\boldsymbol\om\ge\boldsymbol\mu^-+\frac14\boldsymbol\om$, we may rephrase \eqref{Eq:alternative-int}$_2$ as
\[
\Big|\Big\{\boldsymbol\mu^+-u\ge\tfrac14\boldsymbol\om\Big\}\cap \widetilde{Q}\Big|> \nu |\widetilde{Q}|.
\]
Then it is not hard to see that there exists 
$$
	t_*\in \big[-A\theta\rho^p, -(A-1)\theta\rho^p- \tfrac12 \nu\theta\rho^p\big],
$$ 
such that
\[
\Big|\Big\{\boldsymbol\mu^+-u(\cdot, t_*)\ge\tfrac14\boldsymbol\om\Big\}\cap K_\rho\Big|>\tfrac12\nu |K_\rho|.
\]
Otherwise $t_*$ can be in $\big[-A\theta\rho^p, -(A-1)\theta\rho^p\big]$.
Indeed, if the above inequality does not hold for any $s$ in the given 
interval, then 
\begin{align*}
	\Big|\Big\{\boldsymbol\mu^+-u\ge\tfrac14\boldsymbol\om\Big\}\cap 
	\widetilde{Q}\Big|
	&=
	\int_{-A\theta\varrho^p}^{-(A-1)\theta\rho^p-\frac12\nu\theta\varrho^p}
	\Big|\Big\{\boldsymbol\mu^+-u(\cdot, s)\ge \tfrac{1}4\boldsymbol\om\Big\}
	\cap K_{\varrho}\Big|\,\ds\\
	&\phantom{=\,}
	+\int^{-(A-1)\theta\rho^p}_{-(A-1)\theta\rho^p-\frac12\nu\theta\varrho^p}
	\Big|\Big\{\boldsymbol\mu^+-u(\cdot, s)\ge \tfrac{1}4\boldsymbol\om\Big\}
	\cap K_{\varrho}\Big|\,\ds\\
	&<\tfrac12\nu |K_{\varrho}|\theta\varrho^p\big(1-\tfrac12\nu\big) +\tfrac12\nu\theta\varrho^p
	|K_\varrho |
	<\nu |Q_\varrho|,
\end{align*}
implying a contradiction to the above measure theoretical information.
Recall that due to \eqref{Eq:mu-pm}$_1$ we actually have
$\frac14 \boldsymbol\om\le\boldsymbol\mu^+\le 2\boldsymbol\om$.
Then, based on the above measure theoretical information at $t_*$, an application of Lemma~\ref{Lm:expansion:p} with $c = \frac{1}{4}$, $\Lambda = 2$ and a fixed constant $a = \frac{1}{8} = \frac{1}{2} c$ yields constants $b>0$ and $\eta_1\in(0,1)$ depending only on the data, such that
\[
\boldsymbol \mu^+-u(\cdot, t)\ge\eta_1\boldsymbol\om\quad\text{ a.e. in }K_{\frac12\rho}
\]
for all times
\[
	t_*+\tfrac12b\boldsymbol\om^{q-1}(\eta_1\boldsymbol\om)^{2-p}\rho^p
	\le t\le
	t_*+b\boldsymbol\om^{q-1}(\eta_1\boldsymbol\om)^{2-p}\rho^p.
\]
Now, we determine $A$ such that the set inclusion
$$
	\big( - \theta \big( \tfrac{1}{4}\rho \big)^p, 0 \big]
	\subset
	\big[ t_*+\tfrac12 b \boldsymbol\om^{q-1} (\eta_1\boldsymbol\om)^{2-p} \rho^p, t_* + b \boldsymbol \om^{q-1} (\eta_1\boldsymbol\om)^{2-p} \rho^p \big]
$$
is satisfied.
To this end, we first consider the requirement $0 \leq t_* + b \boldsymbol\om^{q-1} (\eta_1\boldsymbol\om)^{2-p} \rho^p$, which follows if the stronger condition
$$
	0 \leq
	- A \big( \tfrac14 \boldsymbol \omega \big)^{q+1-p} \rho^p + b \boldsymbol\om^{q-1} (\eta_1\boldsymbol\om)^{2-p} \rho^p
$$
is fulfilled.
This leads to the choice
$$
	A = b 4^{q+1-p} \eta_1^{2-p}.
$$
Note that we may assume $A>1$, since we could choose a smaller constant $\eta_1$ in the definition of $A$ by Remark \ref{rem:smaller eta} and use the fact that $p>2$.
The second requirement $- \theta \big( \tfrac{1}{4}\rho \big)^p \geq t_*+\tfrac12 b \boldsymbol\om^{q-1} (\eta_1\boldsymbol\om)^{2-p} \rho^p$ is satisfied if we are able to verify the stronger condition
\begin{align*}
	- (A-1) \theta \rho^p - \tfrac12 \nu \theta \rho^p
	+ \tfrac12 b \boldsymbol\om^{q-1} (\eta_1\boldsymbol\om)^{2-p} \rho^p
	&=
	- (A-1) \theta \rho^p
	- \tfrac12 \nu \theta \rho^p
	+ \tfrac12 A \theta \rho^p \\
	&\leq
	- \theta \big( \tfrac{1}{4}\rho \big)^p,
\end{align*}
which is eqivalent to
$$
	1 - \tfrac12 \nu + \tfrac{1}{4^p} \leq \tfrac12 A.
$$
Since $\nu \in (0,1)$, the last inequality holds true if $A \geq 4$.
However, as mentioned above, we may assume it by making $\eta_1$ smaller. 
Altogether, the above analysis determines $A$ through $\eta_1$ and yields a reduction of oscillation
\begin{equation}\label{Eq:reduc-osc-int-2}
\essosc_{Q_{\frac14\rho}(\theta)}u\le (1-\eta_1)\boldsymbol\om.
\end{equation}

To summarize, let us define 
\[
\eta=\min\Big\{\tfrac12\xi,\,\tfrac12\eta_o,\,\eta_1\Big\}
\in(0,\tfrac12),
\]
where
 $\frac12\xi$ is as in \eqref{Eq:reduc-osc-int-1},
$\frac12\eta_o$ is as in \eqref{Eq:reduc-osc-int-3} and $\eta_1$ is as in \eqref{Eq:reduc-osc-int-2}. 
Combining \eqref{Eq:reduc-osc-int-1} -- \eqref{Eq:reduc-osc-int-2} gives
the reduction of oscillation
 \begin{equation}\label{Eq:reduc-osc-int-4}
\essosc_{Q_{\frac14\rho}(\theta)}u\le (1-\eta)\boldsymbol\om,
 \end{equation}
provided the intrinsic relation \eqref{Eq:start-cylinder} is verified and under \eqref{Eq:Hp-main}$_1$ and \eqref{Eq:mu-pm-}.

In order to iterate the above argument, we introduce
\[
\boldsymbol\om_1=\max\Big\{(1-\eta)\boldsymbol\om, L\rho^{\frac1{p-q-1}}\Big\};
\]
we need to choose $\rho_1=\lm\rho$ for some $\lm\in(0,1)$, such that
\[
	Q_{\rho_1}(A\theta_1)\subset Q_{\frac14\rho}(\theta) \cap Q_o,
	\quad\text{ where } \theta_1= \big( \tfrac14\boldsymbol\om_1 \big)^{q+1-p}.
\]
To this end, we first let 
\[
\lm=\tfrac14 A^{-\frac1p}(1-\eta)^{\frac{p-q-1}p}.
\]
and estimate
\[
	A\theta_1\rho_1^p
	=
	A\big(\tfrac14\boldsymbol\om_1\big)^{q+1-p} (\lambda\rho)^p
	\le 
	\big(\tfrac14\boldsymbol\om\big)^{q+1-p} \big(\tfrac14\rho\big)^p
	=
	\theta \big(\tfrac14\rho\big)^p.
\]
consequently, the first set inclusion $Q_{\rho_1}(A\theta_1)\subset Q_{\frac14\rho}(\theta)$ holds.
Note that $\lambda<\frac14$.
The second set inclusion $Q_{\rho_1}(A\theta_1)\subset Q_o$ is verified similarly with the same choice of $\lm$.
Therefore, taking into account \eqref{Eq:extra-control}, \eqref{Eq:reduc-osc-int-4}
and the violation of \eqref{Eq:mu-pm-}, i.e.~the case where $\essosc_{Q_o} u = \boldsymbol \mu^+ - \boldsymbol \mu^- \leq \frac{1}{2} \boldsymbol \omega$, we arrive at the intrinsic relation
\[
\essosc_{Q_{\rho_1}(A\theta_1)}u\le \boldsymbol\om_1,
\]
which takes the place of \eqref{Eq:start-cylinder} in the next stage. 

\subsection{Reduction of Oscillation Near Zero Concluded}\label{S:case-1-3}
Now we may proceed by induction. 
Suppose that, up to $i=1,2,\cdots j-1$, we have built
\begin{equation*}
\left\{
	\begin{array}{c}
	\rho_o=\rho,
	\quad
	\dsty\varrho_i=\lm\varrho_{i-1},
	\quad
	\theta_i=\big(\tfrac14\boldsymbol\om_i\big)^{q+1-p}\\[5pt]
	\boldsymbol\om_o=\boldsymbol\om,
	\quad
	\boldsymbol\om_i=\max\Big\{(1-\eta)\boldsymbol\om_{i-1},L\rho_{i-1}^{\frac1{p-q-1}}\Big\},\\[5pt]
	Q_i=Q_{\rho_i}(\theta_i),
	\quad
	Q'_i=Q_{\frac14\rho_i}(\theta_i)\\[5pt]
	\dsty\boldsymbol\mu_i^+=\essup_{Q_i}u,
	\quad
	\boldsymbol\mu_i^-=\essinf_{Q_i}u,
	\quad
	\essosc_{Q_i}u\le\boldsymbol\om_i.
	\end{array}
\right.
\end{equation*}
For all the indices $i=1,2,\cdots j-1$, we alway assume that  \eqref{Eq:Hp-main}$_1$ holds true, i.e.,
$$
	\boldsymbol\mu_i^-\le\xi\boldsymbol\om_i\quad
	\text{ and }\quad\boldsymbol\mu_i^+\ge-\xi\boldsymbol\om_i.
$$
By this means the previous arguments   can be repeated and we have for all $i=1,2,\cdots j$,
\[
Q_{\rho_i}(A\theta_i)\subset Q'_{i-1}, \quad	\essosc_{Q_i}u\le(1-\eta)\boldsymbol\om_{i-1}\le\boldsymbol\om_i.
\]
Consequently, iterating the above recursive inequality we obtain for all $i=1,2,\cdots j$,
\begin{equation}\label{Eq:case1}
	\essosc_{Q_i}u
	\le
	\boldsymbol\om_i
	\le
	\max \Big\{ (1-\eta)^i \boldsymbol \omega, L \rho^\frac{1}{p-q-1} \Big\}
	=
	\max \Big\{ \boldsymbol\om \Big(\frac{\rho_i}{\rho}\Big)^{\be_o},
	L\rho^\frac1{p-q-1} \Big\},
\end{equation}
where
$$
	\beta_o=\frac{\ln(1-\eta)}{\ln \lm}.
$$
\subsection{Reduction of Oscillation Away From Zero}\label{S:case-2}
In this section, let us suppose $j$ is the first index satisfying the second case in \eqref{Eq:Hp-main}, i.e.
\[
\text{either \quad$\boldsymbol\mu_j^-> \xi\boldsymbol \om_j$ \quad or\quad $\boldsymbol\mu_j^+< -\xi\boldsymbol \om_j$.}
\]
Let us treat for instance $\boldsymbol\mu_j^->\xi\boldsymbol\om_j$,
for the other case is analogous.
We observe that since $j$ is the first index for this to happen,
one should have $\boldsymbol\mu_{j-1}^+ \le \boldsymbol\mu_{j-1}^- + \boldsymbol \omega_{j-1} \leq (1+\xi) \boldsymbol \om_{j-1}$.
Here, we assume that there exists an index $j-1$ such that the first case in \eqref{Eq:Hp-main} is fulfilled.
This can be justified by choosing  $\boldsymbol \omega =  \frac{1}{\xi}   \| u \|_{L^\infty(E_T)}$ in Section \ref{S:5:1}.
Moreover, since $Q_j \subset Q_{j-1}$, by the definition of the essential supremum one estimates
\[
	\boldsymbol\mu_{j}^-
	\leq
	\boldsymbol\mu_j^+
	\leq
	\boldsymbol\mu_{j-1}^+
	\leq
	(1+\xi) \boldsymbol \om_{j-1}
	\leq
	\frac{1+\xi}{1-\eta}\boldsymbol \om_{j}.
\]
As a result, we have
\begin{equation}\label{Eq:6:9}
\xi\boldsymbol \om_{j}\le\boldsymbol \mu_{j}^-\le\frac{1+\xi}{1-\eta}\boldsymbol \om_{j}.
\end{equation}
The bound \eqref{Eq:6:9} indicates that starting from $j$ the equation \eqref{Eq:1:1}
resembles the parabolic $p$-Laplacian type equation in $Q_j$. 
We drop the suffix $j$ from our notation for simplicity,
and introduce $v:=u/\boldsymbol\mu^-$ in $Q=K_{\rho}\times(-\theta\rho^p,0]$, where $\theta = \big( \frac{1}{4} \boldsymbol \omega )^{q+1-p}$.
It is straightforward to verify that $v$ belongs to the function space \eqref{Eq:1:3p} defined on $Q$ and satisfies
\begin{equation*}
	\pl_tv^{q}-\dvg\bar{\bl{A}}(x,t,v, Dv)=0\quad\text{ weakly in }Q,
\end{equation*}
where, for $ (x,t)\in Q$, $v\in\rr$ and $\z\in\rn$, we have defined 
$$
	\bar{\bl{A}}(x,t,v, \z)
	=
	\bl{A}(x,t,\boldsymbol\mu^-v, \boldsymbol\mu^-\z)/(\boldsymbol\mu^-)^{q},
$$
which is subject to the structure conditions
\begin{equation*}
	\left\{
	\begin{array}{c}
		\bar{\bl{A}}(x,t,v,\z)\cdot \z\ge C_o (\boldsymbol \mu^-)^{p-q-1} |\z|^p \\[5pt]
		|\bar{\bl{A}}(x,t,v,\z)|\le C_1 (\boldsymbol \mu^-)^{p-q-1} |\z|^{p-1}
	\end{array}
	\right .
	\qquad \mbox{ for a.e.~$(x,t)\in Q$, $\forall\, v\in\rr$, $\forall\, \z\in\rn$.}
\end{equation*}
Moreover, since $\boldsymbol \omega/\boldsymbol \mu^- \leq 1/\xi$, we have that
\begin{equation}\label{Eq:6:10}
1\le v\le\frac{\boldsymbol\mu^+}{\boldsymbol \mu^-}\le\frac{\boldsymbol \mu^-+\boldsymbol\om}{\boldsymbol \mu^-}\le
\frac{1+\xi}{\xi}\quad\text{ a.e. in }Q.
\end{equation}
To proceed, it turns out to be more convenient to consider 
$w:=v^{q}$,
 which because of \eqref{Eq:6:10} belongs to the function space \eqref{Eq:1:3p}$_{q=1}$ defined on $Q$ and satisfies
\begin{equation*}
\partial_tw-\dvg\widetilde{\bl{A}}(x,t,w, Dw)=0\quad\text{ weakly in }Q,
\end{equation*}
where  we have defined  the vector-field $\widetilde{\bf A}$ by
\[
	\widetilde{\bl{A}}(x,t,y, \zeta)
	=\
	\bar{\bl{A}}\Big( x,t,\widetilde y^{\frac{1}{q}},\tfrac{1}{q} \widetilde y^{\frac{1-q}{q}}
	\zeta\Big),
\]
for a.e.~$(x,t)\in Q$, 
any $y\in\rr$ and any $\zeta\in\rn$. This time $\widetilde y$
is defined by
\[
	\widetilde y :=\min\Big\{ \max\big\{y,\tfrac12\big\}, 2\Big(\frac{1+\xi}{\xi}\Big)^q\Big\}.
\]

Employing \eqref{Eq:6:10} again, we verify  that there exist
positive constants $\widetilde{C}_o=\boldsymbol\gamma_o (p,q,\xi)C_o$ and $\widetilde{C}_1= \boldsymbol\gamma_1 (p,q,\xi)C_1$, such that
\begin{equation*}
	\left\{
	\begin{array}{c}
		\widetilde{\bl{A}}(x,t,y,\zeta)\cdot \zeta
		\geq
		\widetilde{C}_o (\boldsymbol \mu^-)^{p-q-1} |\zeta|^p \\[5pt]
		|\widetilde{\bl{A}}(x,t,y,\zeta)|
		\leq
		\widetilde{C}_1 (\boldsymbol \mu^-)^{p-q-1} |\zeta|^{p-1},
	\end{array}
	\right.
	\qquad \mbox{ for a.e.~$(x,t)\in Q$, $\forall\, y\in\rr$, $\forall\, \z\in\rn$.}
\end{equation*}
Note that $\xi$ is already fixed in \eqref{Eq:choice-xi} in terms of the data.
To proceed, we introduce the function
$$
	\widehat{w}(x,t)
	:=
	w(x, (\boldsymbol \mu^-)^{q+1-p} t),
$$
which satisfies
\begin{equation}
	\label{Eq:6:11}
	\partial_t \widehat{w} - \dvg\widehat{\bl{A}}(x,t, \widehat{w}, D \widehat{w}) = 0
	\quad\text{ weakly in }
	\widehat{Q}
	:=
	K_\rho \times \big(-(\boldsymbol \mu^-)^{p-q-1} \theta \varrho^p, 0\big]
\end{equation}
and  belongs to the function space \eqref{Eq:1:3p}$_{q=1}$ defined on $\widehat{Q}$.
Here the function $\widehat{\bl{A}}$ is defined by
$$
	\widehat{\bl{A}}(x,t, y, \zeta)
	:=
	(\boldsymbol \mu^-)^{q+1-p} \widetilde{\bl{A}}(x, (\boldsymbol \mu^-)^{q+1-p} t, y, \zeta)
$$
and subject to the structure conditions
\begin{equation}
\label{Eq:6:12}
	\left\{
	\begin{array}{c}
		\widehat{\bl{A}}(x,t,y,\zeta)\cdot \zeta
		\geq
		\widetilde{C}_o |\zeta|^p, \\[5pt]
		|\widehat{\bl{A}}(x,t,y,\zeta)|
		\leq
		\widetilde{C}_1 |\zeta|^{p-1},
	\end{array}
	\right.
	\qquad \mbox{ for a.e.~$(x,t)\in \widehat{Q}$, $\forall\, y\in\rr$, $\forall\, \z\in\rn$.}
\end{equation}
This shows that $\widehat{w}$ is a local weak solution to the parabolic $p$-Laplacian type equation in $\widehat{Q}$.

First proved in \cite{DB86} the power-like oscillation decay for solutions to this kind of
degenerate  parabolic equation is well known by now.  
We state the conclusion in the following proposition in a form that favors our application,
and refer to the monographs \cite{DB, Urbano-08} for a comprehensive treatment of this issue. 
\begin{proposition}\label{Prop:5:1}
Let $p>2$, $\sig$ in $(0,1)$ and $\boldsymbol{\widehat{\om}}>0$. Then, there exist constants $\beta_1$ in $(0,1)$ and $\boldsymbol\gm>1$ depending only on the data
$N,p,\widetilde C_o, \widetilde C_1$ and $\sig$, such that there holds: Whenever $\widehat{w}$ is a bounded, local, weak solution to 
\eqref{Eq:6:11} -- \eqref{Eq:6:12} in $\widehat{Q}$, such that with $\widehat\theta = \boldsymbol{\widehat{\om}}^{2-p}$ 
the assumptions
\begin{equation}\label{Eq:5:8}
	\essosc_{Q_{\sig\varrho}(\widehat\theta)} \widehat{w}
	\le \boldsymbol{\widehat{\om}}
	\quad
	\text{ and }
	\quad
	Q_{\sig\varrho}(\widehat\theta) \subset \widehat{Q},
\end{equation}
hold true, then for all $0<r\le\rho$ we have
\[
	\essosc_{Q_r(\widehat\theta)} \widehat{w}
	\leq
	\boldsymbol \gm \boldsymbol{\widehat{\om}} \Big(\frac{r}{\rho}\Big)^{\be_1}.
\]
\end{proposition}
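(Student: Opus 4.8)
The plan is to derive a single \emph{reduction of oscillation} for $\widehat{w}$ on intrinsically scaled cylinders and then iterate it; this is the classical intrinsic scaling scheme of \cite{DB86}, and I would follow the presentations in \cite{DB, Urbano-08}, specializing the constants to the present normalization. After a translation I may assume the common vertex of $Q_{\sig\varrho}(\widehat\theta)$ and $\widehat{Q}$ is the origin. Set $\boldsymbol{\widehat{\om}}_o:=\boldsymbol{\widehat{\om}}$, $\varrho_o:=\sig\varrho$ and $\widehat\theta_o:=\boldsymbol{\widehat{\om}}_o^{2-p}$, so the hypothesis reads $\essosc_{Q_{\varrho_o}(\widehat\theta_o)}\widehat{w}\le\boldsymbol{\widehat{\om}}_o$ with $Q_{\varrho_o}(\widehat\theta_o)\subset\widehat{Q}$.

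\textbf{One step.} Suppose inductively $\essosc_{Q_{\varrho_n}(\widehat\theta_n)}\widehat{w}\le\boldsymbol{\widehat{\om}}_n$ with $\widehat\theta_n=\boldsymbol{\widehat{\om}}_n^{2-p}$ and $Q_{\varrho_n}(\widehat\theta_n)\subset\widehat{Q}$, and put $\boldsymbol\mu^+:=\essup_{Q_{\varrho_n}(\widehat\theta_n)}\widehat{w}$, $\boldsymbol\mu^-:=\essinf_{Q_{\varrho_n}(\widehat\theta_n)}\widehat{w}$. If $\boldsymbol\mu^+-\boldsymbol\mu^-\le\tfrac12\boldsymbol{\widehat{\om}}_n$ the oscillation is already halved, so assume $\boldsymbol\mu^+-\boldsymbol\mu^->\tfrac12\boldsymbol{\widehat{\om}}_n$. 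On a bottom sub-cylinder $\widetilde{Q}\subset Q_{\varrho_n}(\widehat\theta_n)$ I face the dichotomy
\[
\Big|\big\{\widehat{w}\le\boldsymbol\mu^-+\tfrac14\boldsymbol{\widehat{\om}}_n\big\}\cap\widetilde{Q}\Big|\le\nu|\widetilde{Q}|
\qquad\text{or}\qquad
\Big|\big\{\widehat{w}\le\boldsymbol\mu^-+\tfrac14\boldsymbol{\widehat{\om}}_n\big\}\cap\widetilde{Q}\Big|>\nu|\widetilde{Q}|,
\]
with $\nu\in(0,1)$ the constant of the De Giorgi type lemma for the parabolic $p$-Laplacian (for $q=1$ the quantity $\mathfrak g_\pm$ is quadratic and Lemma~\ref{Lm:DG:1} applies, its auxiliary alternative on $|\boldsymbol\mu^\pm|$ being absent). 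In the first case the De Giorgi lemma yields $\widehat{w}\ge\boldsymbol\mu^-+\tfrac18\boldsymbol{\widehat{\om}}_n$ on a smaller cylinder. In the second case, using $\boldsymbol\mu^+-\tfrac14\boldsymbol{\widehat{\om}}_n\ge\boldsymbol\mu^-+\tfrac14\boldsymbol{\widehat{\om}}_n$, I extract a time level at which $\{\widehat{w}-\boldsymbol\mu^-\ge\tfrac14\boldsymbol{\widehat{\om}}_n\}$ occupies a fixed fraction of $K_{\varrho_n}$, and then apply the expansion of positivity for the degenerate parabolic $p$-Laplacian --- \cite[Chapter~4, Proposition~4.1]{DBGV-mono}, equivalently Lemma~\ref{Lm:expansion:p} with $q=1$ --- to the non-negative supersolution $\widehat{w}-\boldsymbol\mu^-$, obtaining $\widehat{w}\ge\boldsymbol\mu^-+\eta\boldsymbol{\widehat{\om}}_n$ a.e.\ on a full cylinder of later times, with $\eta\in(0,1)$ depending only on the data. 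Either way, on a cylinder $Q_{c\varrho_n}(\cdot)$ with a fixed $c\in(0,1)$ one gets $\essosc\widehat{w}\le(1-\eta)\boldsymbol{\widehat{\om}}_n$; with $\delta:=\max\{1-\eta,\tfrac12\}\in(0,1)$, $\boldsymbol{\widehat{\om}}_{n+1}:=\delta\boldsymbol{\widehat{\om}}_n$ and $\widehat\theta_{n+1}:=\boldsymbol{\widehat{\om}}_{n+1}^{2-p}$, one picks $\varrho_{n+1}:=\lambda\varrho_n$ with $\lambda\in(0,1)$ so small (quantified by the data and by the time span produced above) that $Q_{\varrho_{n+1}}(\widehat\theta_{n+1})$ lies inside the cylinder where the reduction holds and inside $\widehat{Q}$. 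Since $p>2$, $\widehat\theta_{n+1}=\delta^{2-p}\widehat\theta_n$ grows only by a fixed factor, so the spatial contraction by $\lambda$ can be made to dominate, which closes the induction.

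\textbf{Iteration.} One obtains $\essosc_{Q_{\varrho_n}(\widehat\theta_n)}\widehat{w}\le\delta^n\boldsymbol{\widehat{\om}}$ with $\varrho_n=\lambda^n\sig\varrho$ and $\widehat\theta_n=\delta^{n(2-p)}\widehat\theta\ge\widehat\theta$. Given $0<r\le\varrho$ --- the case $r>\sig\varrho$ being immediate by enlarging $\boldsymbol\gm$ and reading $\essosc$ over $Q_r(\widehat\theta)\cap\widehat{Q}$ --- I pick $n$ with $\varrho_{n+1}<r\le\varrho_n$; then $Q_r(\widehat\theta)\subset Q_{\varrho_n}(\widehat\theta_n)$ because $\widehat\theta\le\widehat\theta_n$, so $\essosc_{Q_r(\widehat\theta)}\widehat{w}\le\delta^n\boldsymbol{\widehat{\om}}\le\boldsymbol\gm\,\boldsymbol{\widehat{\om}}\,(r/\varrho)^{\be_1}$ with $\be_1:=\ln\delta/\ln\lambda$ and $\boldsymbol\gm$ absorbing $\delta^{-1}\sig^{-\be_1}$; choosing $\lambda$ small also forces $\delta>\lambda$, hence $\be_1\in(0,1)$.

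\textbf{Main obstacle.} As always for these degenerate equations, the hard branch is the second alternative: energy/De Giorgi estimates alone only control $\widehat{w}$ from above near $\boldsymbol\mu^+$, and turning the measure information on the positivity set of $\widehat{w}-\boldsymbol\mu^-$ into a genuine pointwise lower bound is exactly the expansion of positivity, which is the deep ingredient here (and the reason this proposition is quoted rather than reproved). The rest is bookkeeping: one must verify that the intrinsically scaled cylinders $Q_{\varrho_n}(\widehat\theta_n)$ nest correctly under the geometry $\widehat\theta_n=\boldsymbol{\widehat{\om}}_n^{2-p}$, which pins down the quantitative choice of $\lambda$ and is where the restriction $p>2$ is used.
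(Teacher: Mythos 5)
The paper does not actually prove Proposition~\ref{Prop:5:1}: it is quoted as the classical oscillation-decay result for the degenerate parabolic $p$-Laplacian, first established in \cite{DB86}, with \cite{DB, Urbano-08} cited for complete treatments. Your sketch reconstructs precisely that standard intrinsic-scaling proof, and it is in substance the same scheme the paper itself runs for the doubly nonlinear equation in Sections~\ref{S:case-1}--\ref{S:case-1-3}, specialized to $q=1$ (where, as you correctly note, the signed-power complications and the alternative $|\boldsymbol\mu^\pm|>8\xi\boldsymbol\om$ in Lemma~\ref{Lm:DG:1} disappear). Two pieces of the ``bookkeeping'' you defer deserve to be named, because they are exactly where the quantitative choices live. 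First, in the first alternative the De Giorgi lemma gives the lower bound only on the bottom sub-cylinder; to reach the vertex you must propagate it forward in time via \cite[Chapter~3, Lemma~4.1]{DBGV-mono} (the $q=1$ analogue of Lemma~\ref{Lm:DG:initial:1}), and the level has to be lowered to some $\xi\boldsymbol{\widehat{\om}}_n$ so that the propagation time, of order $(\xi\boldsymbol{\widehat{\om}}_n)^{2-p}\varrho_n^p$, covers the whole remaining time span -- this is the analogue of the choice \eqref{Eq:choice-xi}. Second, the waiting time in the expansion of positivity is $b\,\eta^{2-p}\,\boldsymbol{\widehat{\om}}_n^{2-p}\varrho_n^p$ with $b\,\eta^{2-p}>1$, so its conclusion cannot land at the vertex of a cylinder of intrinsic length $\widehat\theta_n\varrho_n^p$; the one-step argument must be run in a time-stretched cylinder $Q(A\widehat\theta_n)$ with $A\sim b\,\eta^{2-p}$ (equivalently, at a radius reduced by $A^{-1/p}$ so the stretched cylinder fits inside the induction cylinder). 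This is exactly the role of the parameter $A$ introduced in Section~\ref{S:5:1}. With these two standard adjustments your one-step reduction and the iteration close as claimed, and the argument is consistent with the cited sources.
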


We tend to use Proposition~\ref{Prop:5:1}.
First we check the condition \eqref{Eq:5:8} is satisfied.
Indeed, by  the mean value theorem  
 and \eqref{Eq:6:10} there exists some positive $\widetilde{\boldsymbol\gamma}=\widetilde{\boldsymbol\gamma}(q,\xi)$, such that
$$
	\essosc_{\widehat{Q}} \widehat{w}
	=
	\essosc_Q w
	\leq
	\widetilde{\boldsymbol\gamma}  \essosc_Q v
	\leq
	\widetilde{\boldsymbol\gamma}  \frac{\boldsymbol \omega}{\boldsymbol \mu^-}
	=:
	\widehat{\boldsymbol \omega}.
$$
According to \eqref{Eq:6:9} we find that
$$
	\frac{1-\eta}{1+\xi}
	\leq
	\frac{\boldsymbol \omega}{\boldsymbol \mu^-}
	\leq
	\frac{1}{\xi}.
$$
Further, by definition of the corresponding cylinders, we obtain that $Q_{\sig\varrho}(\widehat\theta) \subset \widehat{Q}$, provided
$$
	\Big( \widetilde{\boldsymbol\gamma} \frac{\boldsymbol \omega}{\boldsymbol \mu^-} \Big)^{2-p}
	(\sigma \rho)^p
	\leq
	( \boldsymbol \mu^- )^{p-q-1} 
	\big( \tfrac{1}{4} \boldsymbol \omega \big)^{q+1-p} \rho^p
$$
holds true.
This can be achieved by choosing $\sigma$ small enough, i.e.
$$
	\sigma \leq
	 \big( \tfrac{1}{4} \big)^{q+1-p} \widetilde{\boldsymbol\gamma}^{p-2} \Big( \frac{\boldsymbol \omega}{\boldsymbol \mu^-} \Big)^{q-1}.
$$
In view of the lower and upper bound on the ratio $\boldsymbol \omega/\boldsymbol \mu^-$, 
the number $\sigma$ can be chosen only in terms of the data, such that
$$
	\essosc_{Q_{\sig\varrho}(\widehat\theta)} \widehat{w}
	\leq
	\widehat{\boldsymbol \omega},
$$
i.e.~the condition \eqref{Eq:5:8} is fulfilled.
Consequently, by Proposition \ref{Prop:5:1} we have
$$
	\essosc_{Q_r(\widehat\theta)} \widehat{w}
	\leq
	\boldsymbol \gamma \widehat{\boldsymbol \omega} \Big(\frac{r}{\rho}\Big)^{\be_1}
	\leq
	\bar{\boldsymbol \gamma} \Big(\frac{r}{\rho}\Big)^{\be_1}
$$
for $\bar{\boldsymbol \gamma}=\boldsymbol \gamma\widetilde{\boldsymbol \gamma}/\xi$ and for any $0 < r \le \rho$, with some $\beta_1\in (0,1)$ depending only on the data.
Since $p>2$, we may estimate
$$
\widehat\theta > \widehat{\theta}_o := \Big(\frac{\widetilde{\boldsymbol \gamma}}{\xi } \Big)^{2-p}
$$ 
and conclude that
$$
	\essosc_{Q_r(\widehat{\theta}_o)} \widehat{w}
	\leq
	\bar{\boldsymbol\gamma} \Big(\frac{r}{\rho}\Big)^{\be_1} \qquad\forall\, 0<r\le\rho.
$$
Reverting to $w$ and using the fact that $q+1 < p$ and \eqref{Eq:6:9} in order to estimate 
$$(\boldsymbol \mu^-)^{q+1-p} \geq \Big( \frac{1 + \xi}{1 - \eta} \boldsymbol \omega \Big)^{q+1-p},$$ 
we obtain that
$$
	\essosc_{Q_r \left( \widehat{\theta}_1 \boldsymbol \omega^{q+1-p} \right)} w
	\leq
	\bar{\boldsymbol\gamma} \Big(\frac{r}{\rho}\Big)^{\be_1},
$$
where 
$$
\widehat{\theta}_1 := \Big( \frac{1 + \xi}{1 - \eta} \Big)^{q+1-p} \widehat{\theta}_o
$$ 
depends only on the data.
Recalling the definition of $w$, by   the mean value theorem 
and \eqref{Eq:6:10} one easily estimates that  for some positive $\widetilde{\boldsymbol\gamma}=\widetilde{\boldsymbol\gamma}(q,\xi)$,
$$
	\essosc_{Q_r ( \widehat{\theta}_1 \boldsymbol \omega^{q+1-p} )} v
	\leq
	\widetilde{\boldsymbol\gamma}(q,\xi)\essosc_{Q_r ( \widehat{\theta}_1 \boldsymbol \omega^{q+1-p} )} w.
$$
Finally, we revert to $u$ and the suffix $j$, and use \eqref{Eq:6:9} to estimate $\boldsymbol \mu^-_j \leq \frac{1 + \xi}{1 - \eta} \boldsymbol \omega_j$, which leads to
\begin{equation}\label{Eq:case2}
    \essosc_{Q_r ( \widehat{\theta}_1 \boldsymbol \omega^{q+1-p}_j )} u
    \le
	\boldsymbol \mu^-_j \essosc_{Q_r ( \widehat{\theta}_1 \boldsymbol \omega^{q+1-p}_j )} v
	\leq
	\boldsymbol\gamma \boldsymbol \omega_j \Big(\frac{r}{\rho_j}\Big)^{\be_1},
\end{equation}
whenever $0<r<\rho_j$.
Since $\rho \leq 1$, we have that $\boldsymbol \omega_j \leq \boldsymbol \omega_1 \leq \max\{ \boldsymbol \omega, L \} =: \boldsymbol \omega_L$ and therefore we obtain that $Q_r ( \widehat{\theta}_1 \boldsymbol \omega_L^{q+1-p} ) \subset Q_r ( \widehat{\theta}_1 \boldsymbol \omega^{q+1-p}_j )$.
Combining this with \eqref{Eq:case1} and \eqref{Eq:case2},
we arrive at the following: for all $0<r<\rho$,
\[
	\essosc_{Q_r \left( \widehat{\theta}_1 \boldsymbol \omega_L^{q+1-p} \right)} u
	\leq
	\boldsymbol{\gm}  \boldsymbol \om  \Big(\frac{r}{\rho}\Big)^{\be_2}+\boldsymbol{\gm} L\rho^\frac{1}{p-q-1},
	\qquad\text{where } \be_2=\min\{\beta_o,\beta_1\}.
\]
Without loss of generality, we may assume the above oscillation estimate holds with $\rho$ replaced by some $\widetilde{\rho}\in(r,\rho)$.
Then taking $\widetilde{\rho}=(r\rho)^\frac12$ and properly adjusting the H\"older exponent, we obtain the power-like decay of oscillation
$$
	\essosc_{Q_r \left( \widehat{\theta}_1 \boldsymbol \omega_L^{q+1-p} \right)} u
	\leq
	\boldsymbol \gamma \boldsymbol \omega \Big( \frac{r}{\rho} \Big)^\frac{\beta_2}{2} + \boldsymbol \gamma L \rho^\frac{1}{p-q-1} \Big( \frac{r}{\rho} \Big)^\frac{1}{2(p-q-1)}
	\leq
	\boldsymbol \gamma \boldsymbol \omega_L 
	\Big( \frac{r}{\rho} \Big)^\beta,
$$
where
$$
	\beta = \min\Big\{ \frac{\beta_2}{2}, \frac{1}{2(p-q-1)} \Big\}.
$$
At this stage, the proof of Theorem \ref{Thm:1:1} can be completed by  a standard covering argument.
\section{The Second Proof of Theorem \ref{Thm:1:1}}\label{S:6}
The purpose of this section is to present another proof of Theorem \ref{Thm:1:1}
without using the expansion of positivity (Lemma~\ref{Lm:expansion:p}).
As we shall see, the arguments in Section~\ref{S:6:2} are similar to that of Section~\ref{S:case-1}.
The main difference appears in Section~\ref{S:6:3}. 
To avoid using Lemma~\ref{Lm:expansion:p} as done in Section~\ref{S:case-1-2},
we perform an argument of DiBenedetto \cite{DB86}, adapted in \cite{BDL}.
The virtual advantage of this section is that the proof relies solely on 
the energy estimates in Proposition~\ref{Prop:2:1}. As such it offers an amenable adaption
near the boundary given Neumann data, cf.~Section~\ref{S:Neumann}.

\subsection{The Proof Begins}
The set-up is the same as in Section~\ref{S:5:1}.
Namely, we introduce the quantities 
$\{\boldsymbol \mu^\pm,\,\boldsymbol \om ,\, \theta, \, L,\, A\}$ and the cylinders $Q_\rho(A\theta)\subset Q_o$.
Moreover, they are connected by the intrinsic relation \eqref{Eq:start-cylinder}.
For a positive $\xi$ to be determined, the proof unfolds along two main cases, as in \eqref{Eq:Hp-main}.

\subsection{Reduction of Oscillation Near Zero--Part I}\label{S:6:2}
Like in Section~\ref{S:case-1}, we assume that \eqref{Eq:Hp-main}$_1$ holds and work with $u$ as a super-solution near its infimum.
Then we proceed with the assumption \eqref{Eq:mu-pm-}, which implies one of \eqref{Eq:mu-pm} holds. 
We may take \eqref{Eq:mu-pm}$_1$, such that $\frac14 \boldsymbol\om \le \boldsymbol\mu^+ \le 2\boldsymbol\om$.

The second proof departs from here.
Suppose that for some $\bar{t}\in\big(-(A-1)\theta\rho^p,0\big]$,
\begin{equation}\label{Eq:4:2}
	\Big|\Big\{u\le\boldsymbol\mu^-+\tfrac14 \boldsymbol\om\Big\}
	\cap 
	(0,\bar{t})+Q_{\varrho}(\theta)\Big|\le \nu|Q_{\varrho}(\theta)|,
\end{equation}
where $\nu$ is the constant determined in Lemma~\ref{Lm:DG:1} in terms of the data.
According to Lemma~\ref{Lm:DG:1} applied with $\xi=\frac14$, we have
\[
	u\ge\boldsymbol \mu^-+\tfrac{1}8\boldsymbol\om
	\quad
	\mbox{a.e.~in $(0,\bar{t})+Q_{\frac12 \varrho}(\theta)$,}
\]
since the other alternative, i.e., $|\boldsymbol\mu^-|\ge 2\boldsymbol \om$,
does not hold due to \eqref{Eq:Hp-main}$_1$.
This pointwise information parallels \eqref{Eq:initial-condi} in Section~\ref{S:case-1}.
Similar arguments can be reproduced as in Section~\ref{S:case-1} to obtain 
the reduction of oscillation as in \eqref{Eq:reduc-osc-int-1} -- \eqref{Eq:reduc-osc-int-3}.
In particular, only Lemma~\ref{Lm:DG:1}, Lemma~\ref{Lm:DG:initial:1} and Lemma~\ref{Lm:DG:initial:2} are used.
In this process we fix the constant $\xi$ as in \eqref{Eq:choice-xi} depending on the data
and $A$, which will be chosen next in terms of the data.
\subsection{Reduction of Oscillation Near Zero--Part II}\label{S:6:3}
In this section we still assume that \eqref{Eq:Hp-main}$_1$ holds.
However, now we work with $u$ as a sub-solution near its
supremum.
Keep also in mind that \eqref{Eq:mu-pm}$_1$ is enforced,
such that $\frac14 \boldsymbol\om\le\boldsymbol\mu^+\le 2\boldsymbol\om$ may be assumed.

Suppose contrary to \eqref{Eq:4:2} that, recalling $\theta=(\frac14\boldsymbol\om)^{q+1-p}$, 
\begin{equation*}
	\Big|\Big\{u\le\boldsymbol\mu^-+\tfrac14 \boldsymbol\om\Big\}
	\cap 
	(0,\bar{t})+Q_{\varrho}(\theta)\Big|> \nu|Q_{\varrho}(\theta)|,\qquad\forall\, \bar{t}\in\big(-(A-1)\theta\rho^p,0\big].
\end{equation*}
Then for any such $\bar{t}$, it is easy to see that there exists some 
$s\in\big[\bar{t}-\theta\varrho^p,\bar{t}-\tfrac{1}2\nu\theta\varrho^p\big]$ with
\begin{equation*}
	\Big|\Big\{u(\cdot, s)\le\boldsymbol \mu^-+\tfrac{1}4\boldsymbol\om\Big\}\cap K_{\varrho}\Big|>\tfrac{1}2\nu |K_{\varrho}|.
\end{equation*}
Since we assumed that $\boldsymbol \mu^+ - \boldsymbol \mu^- > \frac12 \boldsymbol \omega$, there holds $\boldsymbol\mu^+-\frac14\boldsymbol\om>\boldsymbol\mu^-+\frac14\boldsymbol\om$, which implies
\begin{equation*}
	\Big|\Big\{u(\cdot,s)\le\boldsymbol \mu^+-\tfrac{1}4\boldsymbol\om\Big\}\cap K_\varrho\Big|
	\ge\tfrac12\nu|K_{\varrho}|.
\end{equation*}
Recall that due to \eqref{Eq:mu-pm}$_1$ we have
$\frac14 \boldsymbol\om\le\boldsymbol\mu^+\le 2\boldsymbol\om$. 
Thus our assumptions  for the following Sections~\ref{S:4:3:1} -- \ref{S:4:3:3} are
\begin{equation}\label{Eq:4:3a}
	\tfrac14 \boldsymbol\om\le\boldsymbol\mu^+\le2\boldsymbol\om,
\end{equation}
and
\begin{equation}\label{Eq:4:3b}
	\left\{
	\begin{array}{c}
	\mbox{for any $\bar{t}\in\big(-(A-1)\theta\varrho^p,0\big]$ there exists $s\in\big[\bar{t}-\theta\varrho^p, \bar{t}
	-\tfrac12 \nu\theta\varrho^p
	\big]$}\\[6pt]
	\mbox{such that   	$\dsty\Big|\Big\{u(\cdot,s)\le\boldsymbol \mu^+-\tfrac{1}4\boldsymbol\om\Big\}\cap K_\varrho\Big|
	\ge\tfrac12\nu|K_{\varrho}|. $  }
	\end{array}
	\right.
\end{equation}
They would allow us to determine $A$ and reduce the oscillation in this case.
Similar arguments in Sections~\ref{S:4:3:1} -- \ref{S:4:3:3} have been carried out in \cite{BDL}.
However we think it is necessary to adapt them in the new setting because of the technical nature.
\subsubsection{Propagation of Measure Theoretical Information}\label{S:4:3:1}
\begin{lemma}\label{Lm:4:2}
Suppose \eqref{Eq:4:3a} and  \eqref{Eq:4:3b} are in force.
There exists $\eps\in(0,1)$,
depending only on $\nu$  and the data, such that
\begin{equation*}
	\Big|\Big\{ u(\cdot, t)\le\boldsymbol \mu^+-\eps\boldsymbol \om\Big\}\cap K_{\varrho}\Big|
	\ge
	\tfrac14\nu |K_\varrho|
	\quad\mbox{for all $t\in(s,\bar{t}\,]$.}
\end{equation*}
\end{lemma}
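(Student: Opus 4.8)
This is a classical "shrinking of the good set does not happen too fast" lemma, and the natural route is a logarithmic-free De Giorgi argument based purely on the energy estimate of Proposition~\ref{Prop:2:1} applied to $(u-k)_-$ with the level $k=\boldsymbol\mu^+-\tfrac14\boldsymbol\om$ (equivalently, treating $u$ as a sub-solution near its supremum). The idea: fix the time $s\in\big[\bar t-\theta\varrho^p,\bar t-\tfrac12\nu\theta\varrho^p\big]$ provided by \eqref{Eq:4:3b}, at which the measure of $\{u(\cdot,s)\le\boldsymbol\mu^+-\tfrac14\boldsymbol\om\}$ in $K_\varrho$ is at least $\tfrac12\nu|K_\varrho|$, and run the energy estimate on the cylinder $K_\varrho\times(s,\bar t\,]$ between the levels $k_0=\boldsymbol\mu^+-\tfrac14\boldsymbol\om$ and $k=\boldsymbol\mu^+-\eps\boldsymbol\om$ with $\eps\ll\tfrac14$. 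First I would choose a cutoff $\zeta(x)$ that is $1$ on $K_{(1-\sigma)\varrho}$ and vanishes on $\partial K_\varrho$, with $|D\zeta|\le(\sigma\varrho)^{-1}$, independent of $t$; since we only need an $L^1$-in-time control of the bad set, a De Giorgi isoperimetric inequality (the one relating the measure of $\{(u-k)_-\ge 0\}\setminus\{(u-k_0)_-\ge 0\}$ on each time slice to the gradient integral) will convert the energy bound into a differential inequality for $t\mapsto|\{u(\cdot,t)\le\boldsymbol\mu^+-\eps\boldsymbol\om\}\cap K_{(1-\sigma)\varrho}|$.

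**Key steps, in order.** (1) Apply Proposition~\ref{Prop:2:1} with $u$ as a sub-solution, level $k=\boldsymbol\mu^+-\eps\boldsymbol\om$, over $K_\varrho\times(s,\bar t\,]$, starting from time $s$; the initial term $\int_{K_\varrho\times\{s\}}\zeta^p\mathfrak g_+(u,k)\,\dx$ is harmless because on that slice the integrand is pointwise $\le\boldsymbol\gm(\eps\boldsymbol\om)^{q+1}$ (here the control $\tfrac14\boldsymbol\om\le\boldsymbol\mu^+\le2\boldsymbol\om$ from \eqref{Eq:4:3a} pins down the powers of $u$, turning $\mathfrak g_+(u,k)$ into something comparable to $\boldsymbol\om^{q-1}(u-k)_-^2$ up to data-dependent constants). (2) On the right-hand side, $(u-k)_-\le\eps\boldsymbol\om$ and $|D\zeta|^p\le(\sigma\varrho)^{-p}$, and the length of the time interval is $\le\theta\varrho^p=(\tfrac14\boldsymbol\om)^{q+1-p}\varrho^p$; the $\mathfrak g_+$ term with $|\partial_t\zeta^p|$ disappears because $\zeta$ is time-independent. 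So the total energy is bounded by $\boldsymbol\gm\,\eps^{p}\boldsymbol\om^{q+1}\sigma^{-p}|K_\varrho|$ after absorbing the power of $\theta$; in particular the $\essup_t$ of $\int\zeta^p\mathfrak g_+(u,k)\,\dx$ and the space-time Dirichlet integral $\iint\zeta^p|D(u-k)_-|^p$ are both controlled. (3) Feed this into the De Giorgi slicewise inequality: on each $t$,
\[
(\eps_0-\eps)\boldsymbol\om\,\big|\{u(\cdot,t)\le\boldsymbol\mu^+-\eps\boldsymbol\om\}\cap K_{(1-\sigma)\varrho}\big|
\le
\frac{\boldsymbol\gm\varrho^{N+1}}{\big|K_\varrho\setminus\{u(\cdot,t)\le\boldsymbol\mu^+-\eps_0\boldsymbol\om\}\big|}\int_{\{\eps\boldsymbol\om\le(\boldsymbol\mu^+-u)\le\eps_0\boldsymbol\om\}\cap K_\varrho}|Du|\,\dx,
\]
with $\eps_0=\tfrac14$; the denominator measure of the complement of the bad set on each slice is at least, say, $\tfrac12\alpha_*|K_\varrho|$-type bound — but here we do not have \eqref{geometry}; instead we argue by contradiction: \emph{if} the conclusion fails at some $t_1\in(s,\bar t\,]$, then $|\{u(\cdot,t_1)\le\boldsymbol\mu^+-\eps\boldsymbol\om\}\cap K_\varrho|<\tfrac14\nu|K_\varrho|$, hence the "good" complement has measure $\ge(1-\tfrac14\nu)|K_\varrho|$ on that slice, which is what powers the isoperimetric denominator. (4) Integrate the slice inequality in time over $(s,t_1)$, bound the right-hand side by Cauchy–Schwarz using the Dirichlet integral from step (2) and the measure of the intermediate "annular" set $\{\eps\boldsymbol\om<\boldsymbol\mu^+-u<\tfrac14\boldsymbol\om\}$, and close the loop: the left-hand side, using that the bad-set measure was $\ge\tfrac12\nu|K_\varrho|$ at the initial slice $s$ and stays $\ge\tfrac14\nu|K_\varrho|$ as long as the conclusion holds, forces a lower bound that contradicts the smallness $\eps^{p}\sigma^{-p}\ll 1$ once $\eps$ is chosen small enough (depending on $\nu$ and the data). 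Finally optimize: fix $\sigma$ first (so that $(1-\sigma)\varrho$ is close to $\varrho$, at the cost of a controlled $\sigma^{-p}$), then choose $\eps$ small so the contradiction term beats the constants; this is where $\eps$ ends up depending only on $\nu$ and the data, and the conclusion holds for all $t\in(s,\bar t\,]$.

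**Main obstacle.** The delicate point is not the energy estimate itself but the \emph{time-continuity of the bad-set measure} — the fact that once $|\{u(\cdot,t)\le\boldsymbol\mu^+-\eps\boldsymbol\om\}\cap K_\varrho|$ dips below $\tfrac14\nu|K_\varrho|$ it must have done so on a set of times of positive measure, so that the contradiction argument can be localized to a subinterval where the "good complement" is large enough to run the isoperimetric inequality. In a purely energy-based framework (no logarithmic estimates) this requires being careful that the differential-inequality-in-time one extracts really controls $t\mapsto|\{\cdots\}\cap K_{(1-\sigma)\varrho}|$ and not merely its time-average; the clean way is to show the map $t\mapsto\int_{K_\varrho}\zeta^p\mathfrak g_+(u(\cdot,t),k)\,\dx$ is continuous (which it is, from $u\in C(0,T;L^{q+1}_{\loc})$ and the pointwise bounds on $u$), and to phrase the whole argument as: "the set of $t$ for which the conclusion fails is relatively open in $(s,\bar t\,]$, does not contain $s$ by \eqref{Eq:4:3b}, and cannot have a first point of failure by the integrated energy inequality." A secondary nuisance is bookkeeping the exact powers of $\boldsymbol\om$: the factor $\theta=(\tfrac14\boldsymbol\om)^{q+1-p}$ on the time length exactly cancels the $\boldsymbol\om^{q+1-p}$ that would otherwise appear, leaving an $\boldsymbol\om$-free inequality — this cancellation is the reason the cylinder is taken intrinsically, and it must be tracked so that $\eps$ genuinely depends only on $\nu$ and the structural data, not on $\boldsymbol\om$ or $\varrho$.
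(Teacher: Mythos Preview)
Your outline has a genuine gap, and it lies precisely where you declare the initial term ``harmless.'' With $k=\boldsymbol\mu^+-\eps\boldsymbol\om$ and $\tfrac14\boldsymbol\om\le\boldsymbol\mu^+\le2\boldsymbol\om$, the pointwise bound $\mathfrak g_+(u,k)\le\boldsymbol\gm\,\boldsymbol\om^{q-1}(\eps\boldsymbol\om)^2=\boldsymbol\gm\,\eps^2\boldsymbol\om^{q+1}$ gives an initial contribution of order $\eps^2\boldsymbol\om^{q+1}|K_\varrho|$, which is the \emph{same} order as the $\essup_t$ term on the left (and dominates the gradient term, which is of order $\eps^p$). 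No amount of taking $\eps$ small makes the initial term small relative to the left-hand side; the inequality you obtain is $\mathrm{LHS}\le\boldsymbol\gm\cdot\mathrm{LHS}+(\text{small})$, which is vacuous. The De~Giorgi isoperimetric inequality you invoke in step~(3) does not rescue this: it relates spatial measures on a single time slice to the gradient on that slice, and carries no information about how the measure evolves from $s$ to $t$. The contradiction argument in step~(4) never produces a genuine differential inequality in time, because the only time-linking device available is the $\essup_t$ control from Proposition~\ref{Prop:2:1}, and you have already thrown away the crucial constant in front of the initial term.

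The paper's proof is a direct comparison of $\int_{K_\varrho}\zeta^p\mathfrak g_+(u(\cdot,t),k)\,\dx$ between $t$ and $s$, and the whole content is in tracking that constant. First, the measure information at $s$ is used to bound the initial integral not by $|K_\varrho|\cdot\sup\mathfrak g_+$ but by $(1-\tfrac12\nu)|K_\varrho|\int_k^{\boldsymbol\mu^+}\tau^{q-1}(\tau-k)_+\,\d\tau$, since $\mathfrak g_+(u,k)=0$ on $\{u\le\boldsymbol\mu^+-\tfrac14\boldsymbol\om\}$. Second, a \emph{second} level $k_{\tilde\eps}=\boldsymbol\mu^+-\tilde\eps\eps\boldsymbol\om$ with small $\tilde\eps$ is introduced so that the left side at time $t$ is bounded below by $|\{u(\cdot,t)>k_{\tilde\eps}\}\cap K_{(1-\sigma)\varrho}|\int_k^{k_{\tilde\eps}}\tau^{q-1}(\tau-k)_+\,\d\tau$; the ratio $\int_k^{\boldsymbol\mu^+}/\int_k^{k_{\tilde\eps}}=1+O(\tilde\eps)$ is then chosen close enough to $1$ that $(1-\tfrac12\nu)(1+O(\tilde\eps))\le 1-\tfrac38\nu$. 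This two-level ratio trick is what substitutes for the logarithmic estimate here; your sketch contains neither the $(1-\tfrac12\nu)$ accounting nor the ratio step. (Incidentally, the slicewise isoperimetric argument you describe is exactly how the \emph{next} lemma, Lemma~\ref{Lm:A:4}, is proved---there one already has measure information for \emph{every} $t$, supplied by the present lemma.)
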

\begin{proof}
For ease of notation, we set $s=0$.
Further, for $\dl>0$ and $0<\varep\le\frac18$ to be determined by the data and $\nu$, we consider $Q:=K_{\varrho}\times(0,\dl\varep^{2-p}\theta\varrho^p]$ and $k=\boldsymbol\mu^+-\varep\boldsymbol \om \ge\frac18\boldsymbol\om$.
Applying the energy estimate in Proposition~\ref{Prop:2:1} with a standard non-negative time independent cutoff function 
$\zeta (x,t)\equiv\z(x)$ that equals $1$ on $K_{(1-\sig)\varrho}$ for some $\sigma\in(0,1)$ to be fixed later, vanishes on $\pl K_{\varrho}$ and satisfies
$|D\z|\le(\sig\varrho)^{-1}$, we obtain for all $0<t<\dl \varep^{2-p}\theta\rho^p$ that
\begin{align*}
	\int_{K_\varrho\times\{t\}}&\int_{k}^u \tau^{q-1}(\tau-k)_+\,\d\tau \, \z^p\,\dx\\
	&\le
	\int_{K_\varrho\times\{0\}}\int_{k}^u \tau^{q-1}(\tau-k)_+\,\d\tau \, \z^p\,\dx
	+
	\boldsymbol\gm\iint_{Q}(u-k)^{p}_+|D\z|^p\,\dx\dt.
\end{align*}
Defining $k_{\tilde\eps}=\boldsymbol \mu^+-\tilde\eps\varep \boldsymbol \om $ for some $\tilde\eps\in(0,\frac12)$, we estimate the term on the left-hand side by
\begin{align*}
		\int_{K_\varrho\times\{t\}}&\int_{k}^u \tau^{q-1}(\tau-k)_+\,\d\tau \, \z^p\,\dx
	\ge\big|\big\{ u(\cdot, t)>k_{\tilde\eps}\big\}\cap K_{(1-\sig)\varrho}\big|
	\int^{k_{\tilde\eps}}_k \tau^{q-1}(\tau-k)_+\,\d\tau.
\end{align*}
Further, note that by the mean value theorem and the restriction $\frac14 \boldsymbol\om\le\boldsymbol\mu^+\le2\boldsymbol\om$, there exists a constant $\boldsymbol \gamma = \boldsymbol \gamma(q)$ such that
\begin{equation*}
	\int^{k_{\tilde\eps}}_k \tau^{q-1}(\tau-k)_+\,\d\tau
	\ge
	\boldsymbol\gm\boldsymbol \om^{q-1}(\varep\boldsymbol\om)^2
	=
	\boldsymbol\gm\varep^2\boldsymbol \om^{q+1}.
\end{equation*}
Next, by \eqref{Eq:4:3b} we obtain for the first term on the right-hand side of the energy estimate that
\begin{align*}
	\int_{K_\varrho\times\{0\}}\int_{k}^u \tau^{q-1}(\tau-k)_+\,\d\tau \, \z^p\,\dx
	\le
	\big(1-\tfrac12\nu\big)|K_{\rho}|
	\int^{\boldsymbol\mu^+}_k \tau^{q-1}(\tau-k)_+\,\d\tau
\end{align*}
and by the choice of $\zeta$ and $u \leq \boldsymbol\mu^+$ for the second term on the right-hand side that
\begin{equation*}
	\iint_{Q}(u-k)^{p}_+|D\z|^p\,\dx\dt
	\le
	\frac{\boldsymbol \gm\dl}{\sig^p}\varep^{2-p}\theta(\varep\boldsymbol\om)^p|K_\varrho|
	\le
	\frac{\boldsymbol \gm\dl}{\sig^p}\varep^{2}\boldsymbol\om^{q+1}|K_\varrho|.
\end{equation*}
Combining the preceding estimates leads to
\begin{align*}
	\big|\big\{ u(\cdot, t)>k_{\tilde\eps}\big\}\cap K_{(1-\sig)\varrho}\big|
	\le 
	\frac{\dsty\int^{\boldsymbol\mu^+}_k \tau^{q-1}(\tau-k)_+\,\d\tau}{\dsty\int^{k_{\tilde\eps}}_k \tau^{q-1}(\tau-k)_+\,\d\tau}
	\big(1-\tfrac12\nu\big)|K_{\rho}| +\frac{\boldsymbol\gm\dl}{\sig^p}|K_\rho|.
\end{align*}
Rewriting the fractional number of integrals on the right-hand side
and using the mean value theorem as well as the restrictions $\frac14 \boldsymbol\om\le\boldsymbol\mu^+\le2\boldsymbol\om$ and $k\ge\frac18 \boldsymbol\om$ yields the bound
\begin{align*}
	\frac{\dsty\int^{\boldsymbol\mu^+}_k \tau^{q-1}(\tau-k)_+\,\d\tau}{\dsty\int^{k_{\tilde\eps}}_k \tau^{q-1}(\tau-k)_+\,\d\tau}
	=
	1+
	\frac{\dsty\int^{\boldsymbol\mu^+}_{k_{\tilde\eps}} \tau^{q-1}(\tau-k)_+\,\d\tau}
	{\dsty\int^{k_{\tilde\eps}}_{k} \tau^{q-1}(\tau-k)_+\,\d\tau}
	\leq
	1 + \boldsymbol\gm\tilde\eps,
\end{align*}
where $\boldsymbol\gm$ depends only on $q$.
Inserting this into the previous inequality, we conclude that
\begin{align*}
	\big|\big\{ u(\cdot, t)>k_{\tilde\eps}\big\}\cap K_{\varrho}\big|
	\le 
	\big(1-\tfrac12\nu\big)\big(1+\boldsymbol\gm\tilde\eps\big)
	|K_{\rho}|
	+
	\frac{\boldsymbol\gm\dl}{\sig^p}|K_\varrho|
	+
	N\sig |K_\varrho|.
\end{align*}
Now, we first fix $\tilde\eps = \tilde\eps(q, \nu)$ small enough that
\begin{equation*}
	\big(1-\tfrac12\nu\big)\big(1+\boldsymbol\gm\tilde\eps\big)
	\le 
	1-\tfrac38\nu
\end{equation*}
and define $\sig:=\frac{\nu}{16N}$.
Then, we choose $\delta$ small enough that $\frac{\boldsymbol\gm\dl}{\sig^p}\le\tfrac1{16} \nu$ and $\varep$ small enough that $\dl\varep^{2-p}\ge1$, where we take into account that $p>2$.
Redefining $\tilde\eps\varep$ as $\eps$, we finish the proof of the lemma.
\end{proof}
Since $\bar{t}$ is arbitrary in $( -(A-1) \theta \rho^p, 0 ]$, the previous lemma actually yields the measure theoretical information
\begin{equation}\label{Eq:4:4}
	\Big|\Big\{ u(\cdot, t)\le\boldsymbol\mu^+-\eps\boldsymbol\om\Big\}
	\cap K_{\varrho}\Big|\ge\tfrac{1}4 \nu |K_\varrho|
\quad\mbox{ for all $t\in\big(-(A-1)\theta\varrho^p,0\big]$.}
\end{equation}
\subsubsection{Shrinking the Measure Near the Supremum}\label{S:4:3:2}
Let $\epsilon\in (0,1)$ denote the constant from Lemma~\ref{Lm:4:2} depending only on the data.
Further, we choose the number $A$ in the form
$$
	A=2^{j_*(p-2)}+1
$$
with some $j_*$ to be fixed later and consider the cylinder $Q_\varrho((A-1)\theta)=Q_\varrho(2^{j_*(p-2)}\theta)$, where $\theta=(\frac14\boldsymbol\om)^{q+1-p}$. 
\begin{lemma}\label{Lm:A:4}
Suppose \eqref{Eq:4:3a} 
and \eqref{Eq:4:4} hold.
Then, there exists a constant $\boldsymbol \gm>0$ depending only on the data, such
that for any positive integer $j_*$, we have
\begin{equation*}
	\Big|\Big\{ u\ge\boldsymbol \mu^+-\frac{\eps\boldsymbol\om}{2^{j_*}}\Big\}\cap Q_\varrho((A-1)\theta)\Big|\le
	\frac{\boldsymbol\gm}{j_*^{\frac{p-1}p}}|Q_\varrho((A-1)\theta)|.
\end{equation*}
\end{lemma}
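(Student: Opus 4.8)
The plan is to deploy the classical De Giorgi "shrinking lemma" (also called the isoperimetric or fast geometric convergence type lemma), in the same vein as in \cite[Chapter~II, Lemma~3.1]{DB}, adapted to the doubly nonlinear structure via the energy estimates of Proposition~\ref{Prop:2:1}. First I would set up the decreasing levels $k_i := \boldsymbol\mu^+ - \tfrac{\eps\boldsymbol\om}{2^i}$ for $i = 0, 1, \dots, j_*$, and apply the energy estimate of Proposition~\ref{Prop:2:1} for the sub-solution $u$ to each truncation $(u - k_i)_+$ over the cylinder $Q_\varrho((A-1)\theta) = Q_\varrho(2^{j_*(p-2)}\theta)$, with a cutoff function that localizes to a slightly smaller cylinder. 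Since $(u - k_i)_+ \le \eps\boldsymbol\om/2^i$ pointwise and $\mathfrak g_+(u,k_i)$ is controlled (using $\tfrac14\boldsymbol\om \le \boldsymbol\mu^+ \le 2\boldsymbol\om$ from \eqref{Eq:4:3a} and the mean value theorem) by $\boldsymbol\gm\boldsymbol\om^{q-1}(\eps\boldsymbol\om/2^i)^2$, and the time scale $\theta = (\tfrac14\boldsymbol\om)^{q+1-p}$ balances against the factor $2^{j_*(p-2)}$, each term on the right-hand side of the energy estimate will be bounded by $\boldsymbol\gm (\eps\boldsymbol\om/2^i)^p \varrho^{-p}|Q_\varrho((A-1)\theta)|$ after exploiting the intrinsic scaling; crucially the bad factor $2^{j_*(p-2)}$ is absorbed because $\theta(\eps\boldsymbol\om/2^{j_*})^{2-p}$ times the appropriate powers produces no growth in $j_*$. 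The key point here is that the choice $A - 1 = 2^{j_*(p-2)}$ is exactly tuned so that the cylinder $Q_\varrho((A-1)\theta)$ is "intrinsic" for the level $k_{j_*}$, i.e.\ its time length $2^{j_*(p-2)}\theta\varrho^p$ matches $(\eps\boldsymbol\om/2^{j_*})^{2-p}(\tfrac14 \cdot \text{const})\varrho^p$ up to constants.

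Next I would invoke the quantitative measure information \eqref{Eq:4:4}, which holds for every time slice $t \in (-(A-1)\theta\varrho^p, 0]$, to estimate the measure of the set where $u(\cdot,t) < k_i$ from below by $\tfrac14\nu|K_\varrho|$ uniformly in $t$; this plays the role of the hypothesis "the solution stays away from the supremum on a fixed portion of each slice" that is needed to run the De Giorgi--Poincaré inequality on slices. Then, for the annular region $\{k_i < u < k_{i+1}\}$, combining the isoperimetric-type inequality (a gradient Poincaré inequality on each time slice, using the measure of $\{u < k_i\}$) with the $L^1$-in-time-$L^1$-in-space bound on the gradient coming from the energy estimate, and summing over the slices (integrating in $t$), gives a bound of the form
\[
	\Big(\tfrac{\eps\boldsymbol\om}{2^{i+1}}\Big)\big|A_{i+1}\big|
	\le
	\frac{\boldsymbol\gm\varrho}{\nu}\iint_{Q}\big|D(u-k_i)_+\big|\chi_{\{k_i<u<k_{i+1}\}}\,\dx\dt,
\]
where $A_i := \{u \ge k_i\} \cap Q_\varrho((A-1)\theta)$ and $Q = Q_\varrho((A-1)\theta)$. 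Applying Cauchy--Schwarz (or Hölder with exponents $p$ and $p/(p-1)$) to the right-hand side, the gradient integral is controlled by the energy estimate and the measure of the annular set $|A_i \setminus A_{i+1}|^{(p-1)/p}$, yielding
\[
	|A_{i+1}|^{\frac{p}{p-1}}
	\le
	\boldsymbol\gm|Q|^{\frac{1}{p-1}}\big(|A_i| - |A_{i+1}|\big).
\]

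Finally I would sum this last inequality over $i = 0, 1, \dots, j_* - 1$. Since $|A_{i+1}|$ is decreasing in $i$, the left-hand side is at least $j_*|A_{j_*}|^{p/(p-1)}$, while the right-hand side telescopes to at most $\boldsymbol\gm|Q|^{1/(p-1)}|A_0| \le \boldsymbol\gm|Q|^{p/(p-1)}$. Solving gives $|A_{j_*}| \le \boldsymbol\gm j_*^{-(p-1)/p}|Q|$, which is precisely the claim with $Q = Q_\varrho((A-1)\theta)$. The main obstacle I anticipate is the careful bookkeeping in the first step: verifying that the time-length factor $2^{j_*(p-2)}$ built into $A$ exactly cancels the unfavorable powers of $2^i$ appearing when one rescales the energy estimate to the intrinsic geometry for level $k_i$, so that the constant $\boldsymbol\gm$ in the energy bound is genuinely independent of $j_*$; this is where the doubly nonlinear term $\mathfrak g_+(u,k_i)$ and the mean value theorem estimates (leaning on \eqref{Eq:4:3a}) must be handled with care, and it is the analogue of the corresponding computation in \cite{BDL} that the authors flagged as needing adaptation.
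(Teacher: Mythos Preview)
Your proposal is correct and follows essentially the same approach as the paper: energy estimate from Proposition~\ref{Prop:2:1} at the dyadic levels $k_i$, the slicewise De Giorgi isoperimetric inequality powered by the measure information \eqref{Eq:4:4}, H\"older's inequality, and the telescoping sum yielding the $j_*^{-(p-1)/p}$ decay. The only cosmetic difference is in the cutoff: the paper works on the enlarged spatial cube $K_{2\varrho}$ with a \emph{time-independent} $\zeta$, so the $|\partial_t\zeta^p|$ term disappears and one only has to control the initial boundary term $\int_{K_{2\varrho}\times\{-(A-1)\theta\varrho^p\}}\zeta^p\mathfrak g_+(u,k_j)\,\dx$ together with the spatial gradient term---both bounded by $\boldsymbol\gm(\eps\boldsymbol\om/2^j)^p\varrho^{-p}|Q_\varrho((A-1)\theta)|$ via exactly the intrinsic balancing you describe.
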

\begin{proof}
Consider the cylinder $K_{2\varrho}\times (-(A-1)\theta\varrho^p,0]$ and a time independent cutoff function $\z(x,t)\equiv\zeta (x)$ vanishing on $\pl K_{2\varrho}$ and equal to $1$ in $K_{\varrho}$ such that $|D\z|\le2\varrho^{-1}$.
Applying the energy estimate from Proposition~\ref{Prop:2:1} with levels $k_j=\boldsymbol \mu^+-2^{-j-1}\eps\boldsymbol\om$ for $j=0,\cdots, j_\ast-1$, we obtain that
\begin{align*}
	&\iint_{Q_\varrho((A-1)\theta)}|D(u-k_j)_+|^p\,\dx\dt\\
	&
	\le
	\int_{K_{2\varrho}\times\{ -(A-1)\theta\varrho^p\}} \z^p \mathfrak g_+ (u,k_j) \,\dx
	+\boldsymbol\gm\iint_{K_{2\varrho}\times (-(A-1)\theta\varrho^p,0]}(u-k_j)_+^p|D\z|^p\,\dx\dt.
\end{align*}
By the mean value theorem, the restriction $\frac14 \boldsymbol\om\le \boldsymbol \mu^+\le2\boldsymbol\om$ and the fact that the parameter $\eps$ is already fixed in Lemma \ref{Lm:4:2} in dependence on the data, the first term on the right-hand side of the preceding inequality is estimated by
\begin{align*}
	\int_{K_{2\varrho}\times\{ -(A-1)\theta\varrho^p\}}\z^p \mathfrak g_+ (u,k_j)\,\dx
	&\le
	\boldsymbol\gm\boldsymbol\om^{q-1}\left(\frac{\eps\boldsymbol\om}{2^j}\right)^2|K_{2\varrho}|\\
	&\le
	\frac{\boldsymbol \gm}{\varrho^p\eps^{p-2}}\left(\frac{\eps\boldsymbol\om}{2^j}\right)^p|Q_\rho((A-1)\theta)|\\
	&\le
	\frac{\boldsymbol \gm}{\varrho^p}\left(\frac{\eps\boldsymbol\om}{2^j}\right)^p|Q_\rho((A-1)\theta)|.
\end{align*}
For the second term on the right, we use $u \leq \boldsymbol \mu^+$ and the bound for $|D\zeta|$.
Thus, we arrive at
\begin{equation*}
	\iint_{Q_\varrho((A-1)\theta)}|D(u-k_j)_+|^p\,\dx\dt
	\le
	\frac{\boldsymbol\gm}{\varrho^p}\left(\frac{\eps\boldsymbol\om}{2^j}\right)^p|Q_\varrho((A-1)\theta)|.
\end{equation*}
Next, we apply \cite[Chapter I, Lemma 2.2]{DB} with levels $k_{j+1}>k_{j}$ slicewise to $u(\cdot,t)$ for fixed $t\in( -(A-1)\theta\varrho^p,0]$.
Taking into account the measure theoretical information from \eqref{Eq:4:4}, which implies
\[
	\Big|\Big\{ u(\cdot, t)< k_j\Big\}
	\cap K_{\varrho}\Big|\ge\tfrac{1}4 \nu |K_\varrho|
	\quad\mbox{ for all $t\in(-(A-1)\theta \varrho^p,0]$,}
\]
and using Hölder's inequality, we conclude that
\begin{align*}
	(k_{j+1}-k_{j})&\big|\big\{u(\cdot, t)>k_{j+1} \big\}
	\cap K_{\varrho}\big|
	\\
	&\le
	\frac{\boldsymbol\gm \varrho^{N+1}}{\big|\big\{u(\cdot, t)<k_{j}\big\}\cap K_{\varrho}\big|}	
	\int_{\{ k_{j}<u(\cdot,t)<k_{j+1}\}\cap  K_{\varrho}}\!\!\!|Du(\cdot,t)|\,\dx\\
	&\le
	\frac{\boldsymbol \gm\varrho}{\nu}
	\bigg[\int_{\{k_{j}<u(\cdot,t)<k_{j+1}\}\cap K_{\varrho}}\!\!\!|Du(\cdot,t)|^p\,\dx\bigg]^{\frac1p}
	\big|\big\{ k_{j}<u(\cdot,t)<k_{j+1}\big\}\cap K_{\varrho}\big|^{1-\frac1p}
	\\
	&=
	\frac{\boldsymbol \gm\varrho}{\nu}
	\bigg[\int_{\{k_j<u(\cdot,t)<k_{j+1}\}\cap K_{\varrho}}\!\!\!|Du(\cdot,t)|^p\,\dx\bigg]^{\frac1p}
	\Big[ |A_j(t)|-|A_{j+1}(t)|\big]^{1-\frac1p}.
\end{align*}
Here, we abbreviated $ A_j(t):= \big\{u(\cdot,t)>k_{j}\big\}\cap K_\varrho$.
Further, we define $A_j=\{u>k_j\}\cap Q_\varrho((A-1)\theta)$.
Integrating the preceding inequality with respect to $t$ over $(-(A-1)\theta\varrho^p,0]$ and applying H\"older's inequality slicewise leads to the measure estimate
\begin{align*}
	\frac{\eps \boldsymbol\om}{2^{j+1}}\big|A_{j+1}\big|
	&\le
	\frac{\boldsymbol \gm\varrho}{\nu}\bigg[\iint_{Q_\varrho((A-1)\theta)}|D(u-k_j)_+|^p\,\dx\dt\bigg]^\frac1p
	\big[|A_j|-|A_{j+1}|\big]^{1-\frac{1}p}\\
	&\le\boldsymbol\gm \frac{\eps \boldsymbol\om}{2^j}|Q_\varrho((A-1)\theta)|^{\frac1p}
	\big[|A_j|-|A_{j+1}|\big]^{1-\frac{1}p}.
\end{align*}
Taking the power $\frac{p}{p-1}$ on both sides, we find that
\[
	\big|A_{j+1}\big|^{\frac{p}{p-1}}\le\boldsymbol \gm|Q_\varrho((A-1)\theta)|^{\frac1{p-1}}\big[|A_j|-|A_{j+1}|\big].
\]
Finally, adding the inequalities with respect to $j$ from $0$ to $j_*-1$ we obtain that
\[
	j_* \big|A_{j_*}\big|^{\frac{p}{p-1}}\le\boldsymbol\gm\big|Q_\varrho((A-1)\theta)\big|^{\frac{p}{p-1}},
\]
which is equivalent to
\[
	\big|A_{j_*}\big|\le\frac{\boldsymbol\gm}{j_*^{\frac{p-1}p}}|Q_\varrho((A-1)\theta)|.
\]
To conclude, it suffices to replace $j_*$ by $j_*-1$ in the above line and adjust $\boldsymbol\gm$.
\end{proof}
\subsubsection{A De Giorgi-type Lemma}\label{S:4:3:3}
As in the preceding section, let $\epsilon\in (0,1)$ denote the constant from Lemma~\ref{Lm:4:2} depending only on the data.
\begin{lemma}\label{Lm:5:3}
Suppose that the assumptions \eqref{Eq:4:3a} and \eqref{Eq:4:3b} hold true.
Then, there exists a constant $\nu_1\in(0,1)$ depending only on 
the data, such that if for some $j_*>1$, the measure bound
\begin{equation*}
	\Big|\big\{\boldsymbol\mu^{+}-u\le\frac{\eps\boldsymbol\om}{2^{j_*}}\Big\} 
	\cap Q_{\varrho}((A-1)\theta)
	\Big|
	\le\nu_1|Q_{\varrho}((A-1)\theta)|,
\end{equation*}
holds true, where $A=2^{j_*(p-2)}+1$ and $\theta=(\frac14\boldsymbol\om)^{q+1-p}$, then 
\[
	\boldsymbol\mu^{+}-u\ge\frac{\eps\boldsymbol\om}{2^{j_*+1}}\quad\mbox{a.e. in $Q_{\frac12\varrho}((A-1)\theta)$.}
\]
\end{lemma}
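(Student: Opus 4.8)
The plan is a De Giorgi iteration in shrinking cylinders, organised so that the choice $A-1=2^{j_*(p-2)}$ renders the long time interval $(A-1)\theta\varrho^p$ intrinsically scaled — up to data-constants — for the degenerate $p$-Laplacian at the height $\tfrac{\epsilon\boldsymbol\om}{2^{j_*}}$ near the supremum. Concretely, since $\theta=(\tfrac14\boldsymbol\om)^{q+1-p}$, one has the algebraic identity
\[
(A-1)\theta\varrho^p
=
4^{p-q-1}\epsilon^{p-2}\,\boldsymbol\om^{q-1}\Big(\tfrac{\epsilon\boldsymbol\om}{2^{j_*}}\Big)^{2-p}\varrho^p ,
\]
whose right-hand side displays exactly the weight $\boldsymbol\om^{q-1}$ carried by $\mathfrak g_+$. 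I would set, for $n=0,1,2,\dots$,
\[
\varrho_n=\tfrac12\varrho\big(1+2^{-n}\big),\quad
k_n=\boldsymbol\mu^+-\frac{\epsilon\boldsymbol\om}{2^{j_*+1}}\big(1+2^{-n}\big),\quad
Q_n:=K_{\varrho_n}\times\big(-(A-1)\theta\varrho_n^p,0\,\big],
\]
so that $Q_0=Q_\varrho((A-1)\theta)$ decreases to $Q_{\frac12\varrho}((A-1)\theta)$ while $k_0=\boldsymbol\mu^+-\tfrac{\epsilon\boldsymbol\om}{2^{j_*}}$ increases to $\boldsymbol\mu^+-\tfrac{\epsilon\boldsymbol\om}{2^{j_*+1}}$; put $A_n:=\{u>k_n\}\cap Q_n$ and $Y_n:=|A_n|/|Q_n|$. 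Since $\boldsymbol\mu^+\ge\tfrac14\boldsymbol\om$ and $\epsilon\le\tfrac18$, every level obeys $\tfrac18\boldsymbol\om\le k_n\le\boldsymbol\mu^+\le2\boldsymbol\om$, hence by the mean value theorem $\mathfrak g_+(u,k_n)\simeq\boldsymbol\om^{q-1}(u-k_n)_+^2$ on $\{u>k_n\}$, while $(u-k_n)_+\le\boldsymbol\mu^+-k_n\le\tfrac{\epsilon\boldsymbol\om}{2^{j_*}}$.

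Next I would invoke the energy estimate of Proposition~\ref{Prop:2:1} on $Q_n$ at the level $k_n$, with a piecewise smooth cutoff $\zeta_n$ that vanishes on the parabolic boundary of $Q_n$, equals $1$ on $Q_{n+1}$, and satisfies $|D\zeta_n|\le\boldsymbol\gm\,2^n\varrho^{-1}$ and $|\partial_t\zeta_n^p|\le\boldsymbol\gm\,2^n\big((A-1)\theta\varrho^p\big)^{-1}$. The initial term then disappears; moreover $\mathfrak g_+(u,k_n)$ and $(u-k_n)_+$ are supported in $A_n$, so, using the mean value bound above together with the displayed identity to rewrite the time factor, both terms on the right-hand side are of the common order $\boldsymbol\gm\,2^{np}\varrho^{-p}\big(\tfrac{\epsilon\boldsymbol\om}{2^{j_*}}\big)^p|A_n|$. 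This produces
\[
\iint_{Q_{n+1}}|D(u-k_n)_+|^p\,\dx\dt
+
\boldsymbol\om^{q-1}\essup_{t}\int_{K_{\varrho_{n+1}}\times\{t\}}(u-k_n)_+^2\,\dx
\le
\boldsymbol\gm\,\frac{2^{np}}{\varrho^p}\Big(\frac{\epsilon\boldsymbol\om}{2^{j_*}}\Big)^p|A_n| .
\]

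Then I would feed this into the parabolic Sobolev inequality (cf.~\cite{DB}) applied on $Q_{n+1}$ to $(u-k_n)_+$, and bound $|A_{n+1}|$ from below via $(u-k_n)_+\ge k_{n+1}-k_n=\tfrac{\epsilon\boldsymbol\om}{2^{j_*+n+2}}$ on $\{u>k_{n+1}\}$. Collecting the powers, one finds that $\varrho$ cancels as always, that $\boldsymbol\om$ cancels owing to $\theta=(\tfrac14\boldsymbol\om)^{q+1-p}$, and — crucially — that the factor $2^{-j_*(p-2)p/N}$ coming from the height raised to the power $p/N$ is exactly balanced by $(A-1)^{p/N}=2^{j_*(p-2)p/N}$; what survives is a pure data-constant. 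One is thus led to a recursion
\[
Y_{n+1}\le\boldsymbol\gm\,b^{\,n}\,Y_n^{1+\frac pN},
\]
with $\boldsymbol\gm>1$ and $b>1$ depending only on the data, in particular independent of $j_*,\boldsymbol\om,\varrho$. By the fast geometric convergence lemma (cf.~\cite[Chapter~I, Lemma~4.1]{DB}), $Y_n\to0$ provided $Y_0\le\nu_1:=\boldsymbol\gm^{-N/p}b^{-(N/p)^2}$, which is precisely the assumed measure bound; and $Y_n\to0$ means $\boldsymbol\mu^+-u\ge\tfrac{\epsilon\boldsymbol\om}{2^{j_*+1}}$ a.e.~in $Q_{\frac12\varrho}((A-1)\theta)$.

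The main obstacle is the bookkeeping required to make $\nu_1$ a genuine data-constant: one must track the many explicit factors $2^{j_*}$ entering through the height $\tfrac{\epsilon\boldsymbol\om}{2^{j_*}}$ and the factors $A-1=2^{j_*(p-2)}$ entering through the length of the time interval, and verify that they cancel in the final recursion. This rests entirely on the displayed identity, i.e.~on $Q_\varrho((A-1)\theta)$ being intrinsically scaled for the degenerate $p$-Laplacian at the relevant height; the hypothesis $p>2$ is used exactly to give that identity a meaning (so that $A>1$ and the exponent $2-p$ has the right sign). Once the identity is applied consistently in both the energy estimate and the Sobolev step, the iteration closes verbatim as for the parabolic $p$-Laplacian.
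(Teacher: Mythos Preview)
Your proposal is correct and follows essentially the same De Giorgi iteration as the paper's proof: the same shrinking cylinders and levels, the same use of Proposition~\ref{Prop:2:1} with a parabolic cutoff, the same key cancellation $(A-1)\theta M^{p-2}/\boldsymbol\om^{q-1}=4^{p-q-1}\epsilon^{p-2}$ (a data-constant independent of $j_*$), and the same appeal to the fast geometric convergence lemma. One minor slip: the parabolic Sobolev embedding of \cite[Chapter~I, Proposition~3.1]{DB}, with the $L^2$ time-slice sup coming from $\mathfrak g_+(u,k)\simeq\boldsymbol\om^{q-1}(u-k)_+^2$, yields the gain exponent $1+\tfrac{p}{N+2}$ rather than $1+\tfrac{p}{N}$, but this does not affect the argument.
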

\begin{proof} 
Let $M:=2^{-j_*}\eps\boldsymbol\om$ and define
\begin{align*}
	\left\{
	\begin{array}{c}
	\displaystyle k_n=\boldsymbol\mu^+-\frac{M}2-\frac{M}{2^{n+1}},\quad \tilde{k}_n=\frac{k_n+k_{n+1}}2,\\[5pt]
	\displaystyle \varrho_n=\frac{\varrho}2+\frac{\varrho}{2^{n+1}},
	\quad\tilde{\varrho}_n=\frac{\varrho_n+\varrho_{n+1}}2,\\[5pt]
	\displaystyle K_n=K_{\varrho_n},\quad \widetilde{K}_n=K_{\tilde{\varrho}_n},\\[5pt] 
	\displaystyle Q_n=Q_{\rho_n}((A-1)\theta),\quad
	\widetilde{Q}_n=Q_{\tilde\rho_n}((A-1)\theta).
	\end{array}
	\right.
\end{align*}
We employ the energy estimate from Proposition \ref{Prop:2:1} with cutoff functions $\z$ that vanish on the parabolic boundary of $Q_{n}$,
equal identity in $\widetilde{Q}_{n}$ and fulfill
\begin{equation*}
	|D\z|\le\boldsymbol\gm\frac{2^n}{\varrho}\quad\text{ and }\quad |\z_t|\le\boldsymbol\gm\frac{2^{pn}}{(A-1)\theta\varrho^p}.
\end{equation*}
Using the condition $\frac14\boldsymbol\om\le \boldsymbol\mu^+\le 2\boldsymbol\om$ to estimate the terms on the right-hand side, we find that
\begin{align*}
	\boldsymbol\om^{q-1}\essup_{-(A-1)\theta\tilde\varrho_n^p<t<0} 
	&
	\int_{\widetilde{K}_n}\big(u-\tilde{k}_n\big)^2_+\,\dx
	+
	\iint_{\widetilde{Q}_n}\big|D\big(u-\tilde{k}_n\big)_+\big|^p\,\dx\dt\\
	&\le
	\boldsymbol\gm\frac{2^{pn}}{\varrho^p}M^{p}\left(1+\frac{\boldsymbol\om^{q-1}}{(A-1)\theta M^{p-2}}\right)|A_n|
	=
	\boldsymbol\gm\frac{2^{pn}}{\varrho^p}M^{p}\big(1+\eps^{2-p}\big)|A_n|,
\end{align*}
where we abbreviated 
\[
	A_n=\big\{u>k_n\big\}\cap Q_n.
\]
Taking into account the choice of $\z$, by an application of the Sobolev imbedding
\cite[Chapter I, Proposition~3.1]{DB} and the preceding estimate we conclude that
\begin{align*}
	&\bigg(\frac{M}{2^{n+3}}\bigg)^p
	|A_{n+1}|\le \iint_{\widetilde{Q}_n}\!\!\!\big(u-\tilde{k}_n\big)_+^p\z^p\,\dx\dt\\
	&
	\le
	\bigg[\iint_{\widetilde{Q}_n}\!\!\!\big[(u-\tilde{k}_n)_+\z\big]^{p\frac{N+2}{N}}\,\dx\dt\bigg]^{\frac{N}{N+2}}
	|A_n|^{\frac{2}{N+2}}\\
	&\le
	\boldsymbol
	\gm\bigg[\iint_{\widetilde{Q}_n}\!\!\!\big|D\big[(u-\tilde{k}_n)_+\z\big]\big|^p\,\dx\dt\bigg]^{\frac{N}{N+2}}
	\bigg[\essup_{-(A-1) \theta\tilde{\varrho}_n^p<t<0}\int_{\widetilde{K}_n}\!\!\!\big(u-\tilde{k}_n\big)^2_-\,\dx
	\bigg]^{\frac{p}{N+2}}|A_n|^{\frac{2}{N+2}}\\
	&\le 
	\boldsymbol\gm\boldsymbol\om^{\frac{p(1-q)}{N+2}}
	\bigg(\frac{2^{pn}}{\varrho^p}M^{p}\bigg)^{\frac{N+p}{N+2}}
	\big(1+\eps^{2-p}\big)^{\frac{N+p}{N+2}}
	|A_n|^{1+\frac{p}{N+2}}.
\end{align*}
Hence, for the quantity $\boldsymbol Y_n=|A_n|/|Q_n|$ we deduce the recursive inequality
\begin{align*}
	\boldsymbol Y_{n+1}
	&\le
	\boldsymbol\gm \boldsymbol b^n
	\left(\frac{(A-1)\theta M^{p-2}}{\boldsymbol\om ^{q-1}}\right)^{\frac{p}{N+2}}
	\big(1+\eps^{2-p}\big)^{\frac{N+p}{N+2}}
	\boldsymbol Y_n^{1+\frac{p}{N+2}} \\
	&=
	\boldsymbol\gm \boldsymbol b^n
	\eps^{\frac{p(p-2)}{N+2}}
	\left(1+\eps^{2-p}\right)^{\frac{N+p}{N+2}}
	\boldsymbol Y_n^{1+\frac{p}{N+2}},
\end{align*}
where $\boldsymbol b= 2^\frac{p (2N + p + 2)}{N+2}$ and $\boldsymbol \gm$ only depends on the data.
Thus, the lemma on fast geometric convergence, i.e.~\cite[Chapter I, Lemma~4.1]{DB}, ensures the existence of a constant $\nu_1\in(0,1)$
depending only on the data such that $\boldsymbol Y_n\to0$ if we assume that the smallness condition $\boldsymbol Y_o\le \nu_1$ holds true.
\end{proof}

At this stage, we conclude the reduction of oscillation in the remaining case where \eqref{Eq:4:3a} and \eqref{Eq:4:3b} hold.
To this end, denote by $\epsilon\in(0,1)$, $\boldsymbol\gamma>0$ and $\nu_1\in(0,1)$ the corresponding constants from Lemmas~\ref{Lm:4:2}, \ref{Lm:A:4} and \ref{Lm:5:3}.
Choose a positive integer $j_*$ large enough that
\begin{equation*}
	\frac{\boldsymbol\gm}{j_*^{\frac{p-1}p}}
	\le
	\nu_1 
\end{equation*}
and $Q_{\frac12\varrho}((A-1)\theta)\supset Q_{\frac14\rho}(\theta)$, where $A=2^{j_*(p-2)}+1$.
Hence, applying in turn Lemmas~\ref{Lm:A:4} and \ref{Lm:5:3}, we arrive at 
\[
	\boldsymbol\mu^{+}-u
	\ge
	\frac{\eps\boldsymbol\om}{2^{j_*+1}}\quad\mbox{a.e. in $Q_{\frac14\rho}(\theta)$.}
\]
This gives the reduction of oscillation
\[
	\essosc_{Q_{\frac14\rho}(\theta)}u
	\le
	\Big(1-\frac{\eps\boldsymbol}{2^{j_*+1}}\Big)\boldsymbol\om.
\]
Recall the reduction of oscillation achieved in Section~\ref{S:6:2} via arguments of Section~\ref{S:case-1}.
Namely, $\frac12\xi$ is chosen in the reduction of oscillation \eqref{Eq:reduc-osc-int-1} and
$\frac12\eta_o$ is chosen in the reduction of oscillation \eqref{Eq:reduc-osc-int-3}.
Combining all cases gives the reduction of oscillation exactly as in
\eqref{Eq:reduc-osc-int-4} with the choice
\[\eta=\min\Big\{\frac{\xi}2,\,\frac{\eta_o}2,\, \frac{\eps\boldsymbol}{2^{j_*+1}}\Big\},\] 
from which the rest of the proof can be reproduced just like in Section~\ref{S:5}.
\section{Proof of Boundary Regularity} 
Since Theorems~\ref{Thm:1:2} -- \ref{Thm:1:4} can be proved in a similar way as interior H\"older continuity, we will only give sketchy proofs, where we keep reference to the tools and strategies used in the interior case and highlight the main differences.
\subsection{Proof of Theorem~\ref{Thm:1:2}}\label{S:Initial}
Consider the cylinder $Q_o=K_{\rho}(x_o)\times(0,\rho^{p-1}]\subset E_T$
whose vertex $(x_o,0)$ is attached to the initial boundary $E \times \{ 0 \}$.
For ease of notation assume $x_o=0$ and
set
\begin{equation*}
	\boldsymbol \mu^+=\essup_{Q_o}u,
	\quad
	\boldsymbol\mu^-=\essinf_{Q_o}u,
	\quad
	\boldsymbol\om\ge\boldsymbol\mu^+-\boldsymbol\mu^-.
\end{equation*}
Let $\theta=(\frac14\boldsymbol\om)^{q+1-p}$.
We may assume that
\begin{equation*}
Q_\rho(\theta)\subset Q_o=K_\rho\times(-\rho^{p-1},0],\quad\text{ such that }\quad \essosc_{Q_{\rho}(\theta)}u\le\boldsymbol\om;
\end{equation*}
otherwise we would have 
\begin{equation*}
\boldsymbol\om\le 4\rho^{\frac1{p-q-1}}.  
\end{equation*}
Like in the proof of interior regularity, we start by distinguishing between the main cases
\begin{equation*}
\left\{
\begin{array}{c}
\mbox{when $u$ is near zero:  $\boldsymbol\mu^-\le\boldsymbol\om$ and 
$\boldsymbol\mu^+\ge-\boldsymbol\om$};\\[5pt]
\mbox{when $u$ is away from zero: $\boldsymbol\mu^->\boldsymbol\om$ or $\boldsymbol\mu^+<-\boldsymbol\om$.}
\end{array}\right.
\end{equation*}
The second case reduces to the corresponding estimate for weak solutions to parabolic $p$-Laplacian
equations; see
\cite[Chapter III, Lemma~11.1]{DB}.
In the first case, which implies $|\boldsymbol\mu^{\pm}|\le2\boldsymbol\om$, we proceed by a comparison to the initial datum $u_o$.
More precisely, we assume that
\[
	\mbox{either \quad$\dsty\boldsymbol \mu^+-\tfrac14\boldsymbol \om>\sup_{K_\rho}u_o\;\;$ 
	or
	$\;\;\dsty\boldsymbol \mu^-+\tfrac14\boldsymbol \om<\inf_{K_\rho}u_o$}
\]
since otherwise, we would obtain the bound
\[
\essosc_{Q_o}u\le2\essosc_{K_\rho}u_o.
\]
As both cases can be treated analogously, we consider only the second inequality with $\boldsymbol \mu^-$ and work with $u$ as a super-solution.
Using $|\boldsymbol\mu^{-}|\le2\boldsymbol\om$, Lemma~\ref{Lm:DG:initial:1} (with $\xi=\frac14$)
yields a constant $\nu_o\in(0,1)$ depending only on the data, such that
\[
	u\ge\boldsymbol\mu^-+\tfrac18\boldsymbol\om\quad\mbox{a.e.~in 
	$\widehat{Q}_1:=K_{\frac12\rho}\times\big(0,\nu_o\theta\rho^p\big]$.}
\]
Thus, we arrive at the reduction of oscillation
\[
	\essosc_{\widehat{Q}_1}u\le\tfrac78\boldsymbol\om .
\]
Finally, taking the initial datum into account, we conclude that
\[
	\essosc_{\widehat{Q}_1}u\le\max\Big\{ \tfrac78\boldsymbol\om, 2\boldsymbol\om_{u_o}(\rho)\Big\}.
\]
Now we may proceed by an iteration argument as in \cite[Section~7.1]{BDL} to conclude the proof.

\subsection{Proof of Theorem~\ref{Thm:1:3}}\label{S:Dirichlet}
Consider the cylinder  $Q_o=K_{\rho}(x_o)\times(t_o-\rho^{p-1},t_o]$
whose vertex $(x_o,t_o)$ is attached to $S_T$.
Suppose that $\rho$ is so small that $t_o-\rho^{p-1}>0$ and $\rho<\rho_o$
where $\rho_o$ is the constant from the geometric condition \eqref{geometry}.
Further, we assume that $(x_o,t_o)=(0,0)$ for ease of notation and define
\begin{equation*}
	\boldsymbol \mu^+=\essup_{Q_o\cap E_T}u,
	\quad
	\boldsymbol\mu^-=\essinf_{Q_o\cap E_T}u,
	\quad
	\boldsymbol\om\ge\boldsymbol\mu^+-\boldsymbol\mu^-.
\end{equation*}
Let $\theta=(\frac14\boldsymbol\om)^{q+1-p}$. For some $A>1$ to be determined in terms of the data, we may assume that
\begin{equation*}
Q_\rho(A\theta)\subset Q_o,\quad\text{ such that }\quad \essosc_{Q_{\rho}(A\theta)\cap E_T}u\le\boldsymbol\om;
\end{equation*}
otherwise we would have 
\begin{equation*}
\boldsymbol\om\le L\rho^{\frac1{p-q-1}}\quad\text{ where }L=4A^{\frac1{p-q-1}}.
\end{equation*}

As in the proof of interior H\"older continuity, we consider the main cases 
\begin{equation}\label{Hp-main-Dirichlet}
\left\{
\begin{array}{c}
\mbox{when $u$ is near zero:  $\boldsymbol\mu^-\le\xi\boldsymbol\om$ and 
$\boldsymbol\mu^+\ge-\xi\boldsymbol\om$};\\[5pt]
\mbox{when $u$ is away from zero: $\boldsymbol\mu^->\xi\boldsymbol\om$ or $\boldsymbol\mu^+<-\xi\boldsymbol\om$.}
\end{array}\right.
\end{equation}
Here $\xi\in(0,1)$ will be fixed in terms of the data and $\al_*$,
where $\al_*$ comes from the geometric condition \eqref{geometry} of $\pl E$.

When \eqref{Hp-main-Dirichlet}$_1$ holds true, we either arrive at the bound
\[
\essosc_{Q_o\cap E_T}u\le2\essosc_{Q_o\cap S_T}g
\]
or we continue with a comparison to the boundary datum $g$, i.e. we are concerned with the cases
\begin{equation*}
	\mbox{either}\quad\boldsymbol \mu^+-\tfrac14\boldsymbol \om>\sup_{Q_o\cap S_T}g\;
	\quad\mbox{or} \quad
	\; \boldsymbol \mu^-+\tfrac14\boldsymbol \om<\inf_{Q_o\cap S_T}g.
\end{equation*}
Since the inequalities can be treated analogously, let us consider only the second one.
Observe that $k$ satisfies \eqref{Eq:3:3}$_2$ with $Q_{R,S}$ replaced by $Q_o$,
since $(u-k)_-$ vanishes on $Q_o\cap S_T$ for all $k\le\boldsymbol \mu^-+\tfrac14\boldsymbol \om$.
Therefore, we may employ the energy estimate in Proposition~\ref{Prop:2:2} for super-solutions if we extend all integrals in the energy estimates to zero outside of $E_T$.
The extended function $(u-k)_-$, which will be denoted by the same symbol, is still contained in the functional space in \eqref{Eq:1:3p} within $Q_o$.

The proof of \cite[Lemma~4.2]{BDL}
can be adapted to the current situation, bearing in mind that
we have assumed $\pl E$ fulfills the property of positive geometric density \eqref{geometry}, and therefore
for $k=\boldsymbol \mu^-+\tfrac14\boldsymbol \om$, we have
\begin{equation}\label{Eq:7:3}
\Big|\Big \{ u^-_k(\cdot, t)-\boldsymbol\mu^-\ge\tfrac14\boldsymbol\om\Big\}\cap K_\rho(x_o)\Big|\ge\al_*|K_\rho|\quad
	\mbox{for all $t\in(-A\theta\rho^p,0]$.}
\end{equation}
Here we have used $u^-_k$ as the extension of $u$ to the whole $Q_o$ defined by
\begin{equation*}
u^-_k:=\left\{
\begin{array}{cl}
k-(u-k)_-\quad&\text{ in }Q_{o}\cap E_T,\\[5pt]
k\quad&\text{ in }Q_{o}\setminus E_T.
\end{array}\right.
\end{equation*}
By Lemma~\ref{Lm:A:2} the extension $u^-_k$ turns out to be a local, weak super-solution to \eqref{Eq:1:1}
in $Q_o$, with a properly extended principle part $\widetilde{\bf A}$, cf.~Appendix~\ref{Append:1}.
The extended $\widetilde{\bf A}$ enjoys the same type of structural conditions as in \eqref{Eq:1:2p}.
For simplicity we still use $u$ to denote the extended function in what follows.

Consequently, like in \cite[Lemma~4.2]{BDL}, there exists $\boldsymbol\gm$ depending only on the data and $\al_*$,  
 such that for any positive integer $j_*$, we have
\begin{equation*}
	\bigg|\bigg\{
	u-\boldsymbol \mu^-\le\frac{\boldsymbol \om}{2^{j_*+2}}\bigg\}\cap \widehat Q_\rho\bigg|
	\le\frac{\boldsymbol\gm}{j_*^{\frac{p-1}p}}|\widehat Q_\rho|,
\end{equation*}
where
\[
	 \widehat Q_\rho=K_{\rho}\times \big(-(2^{-j_*-2}\boldsymbol \om)^{q+1-p}\rho^p, 0\big),
\]
provided $|\boldsymbol \mu^-|\le 2^{-j_*-2}\boldsymbol \om$.
Assuming this condition on  $\boldsymbol \mu^-$ is fulfilled and letting $\nu$
be the number determined in Lemma~\ref{Lm:DG:1}, we may choose $j_*$ to satisfy that
$\boldsymbol\gm j_*^{-\frac{p-1}p}\le\nu$. Then setting 
$$\xi=2^{-j_*-2},\quad A_1=2^{j_*(p-q-1)},$$ 
Lemma~\ref{Lm:DG:1} implies that
\[
u-\boldsymbol \mu^-\ge\tfrac12\xi\boldsymbol \om\quad\text{ a.e. in } Q_{\frac12\rho}(A_1\theta),
\]
which in turn yields  that  
\[
\essosc_{\widehat Q_{\frac12\rho}\cap E_T}u\le (1-\tfrac12\xi)\boldsymbol \om.
\]
 
Hence,
the oscillation is reduced when $|\boldsymbol\mu^-|<\xi\boldsymbol\om$
for some $\xi\in(0,1)$ determined by the data and $\al_*$.
To proceed, one still needs to handle the situation when $\boldsymbol\mu^-<-\xi\boldsymbol\om$
since this is not excluded in \eqref{Hp-main-Dirichlet}$_1$.

Our current hypothesis to proceed consists of the measure information \eqref{Eq:7:3}
and $-2\boldsymbol\om<\boldsymbol\mu^-<-\xi\boldsymbol\om$
as we have assumed $\boldsymbol\mu^+\ge-\xi\boldsymbol\om$ in \eqref{Hp-main-Dirichlet}$_1$.
Departing from this, we have two ways to proceed: one is to use the expansion of positivity (Lemma~\ref{Lm:expansion:p});
the other is to follow the arguments in Sections~\ref{S:4:3:2} -- \ref{S:4:3:3}.
We only describe the first option. 

In fact, by Lemma~\ref{Lm:expansion:p}, the measure information \eqref{Eq:7:3} translates into the pointwise estimate
\[
u\ge\boldsymbol \mu^-+\eta\boldsymbol \om\quad\text{ a.e. in }Q_{\frac12\rho}(A_2\theta)
\]
for some $\eta\in(0,1)$ depending on the data and $\xi$.
This gives us a reduction of oscillation as usual, and hence finishes
the reduction of oscillation under the condition \eqref{Hp-main-Dirichlet}$_1$
The constant $A_2$ is determined by the data in this step, through $b$ and $\eta$ of Lemma~\ref{Lm:expansion:p}.
The final choice of $A$ is given by the larger one of $A_1$ and $ A_2$.

As in the interior case, we repeat the arguments inductively until the second case of \eqref{Hp-main-Dirichlet} is satisfied for some index $j$ for the first time.
Starting from $j$, the equation behaves like the parabolic $p$-Laplacian type equation within $Q_j\cap E_T$.
In order to render this point technically, we adapt the proof for interior regularity, where we use in particular the boundary regularity result \cite[Proposition~7.2]{BDL} for the parabolic $p$-Laplacian near the lateral boundary.

\subsection{Proof of Theorem~\ref{Thm:1:4}}\label{S:Neumann}
First of all, we observe that the second proof of interior regularity (Theorem~\ref{Thm:1:1}) in Section~\ref{S:6} is based solely on
the energy estimates in Proposition~\ref{Prop:2:1} and a corresponding
H\"older estimate for solutions to the parabolic $p$-Laplacian.

A key ingredient -- the Sobolev imbedding (cf. \cite[Chapter I, Proposition~3.1]{DB}) -- was used in order to establish
Lemma~\ref{Lm:DG:1}, Lemma~\ref{Lm:DG:initial:1}, Lemma~\ref{Lm:DG:initial:2}, Lemma~\ref{Lm:4:2} and Lemma~\ref{Lm:5:3}, assuming the functions $(u-k)_{\pm}\z^p$ vanish on the lateral boundary of the domain of integration.
This assumption in turn is fulfilled  by choosing a proper cutoff function $\z$.
In the boundary situation similar arguments have been employed in Section~\ref{S:Dirichlet} or in Section~\ref{S:Initial}
by restricting the value of the level $k$ according to the Dirichlet data as in \eqref{Eq:3:3}
or the initial data as in \eqref{Eq:3:2}.

However, in the current situation of Neumann data the functions $(u-k)_{\pm}\z^p$ under conditions of Proposition~\ref{Prop:2:4} do not vanish on $S_T$ and therefore such a Sobolev imbedding cannot be used in general.
On the other hand, a similar Sobolev imbedding (cf. \cite[Chapter. I, Proposition 3.2]{DB})
 that does not require functions to vanish on the boundary
still holds for the functional space
\begin{equation*}  
	u\in C\big(0,T;L^p(E)\big)\cap L^p\big(0,T; W^{1,p}(E)\big).
\end{equation*}
The appearing constant now depends on $N$, 
the structure of $\pl E$ and the ratio $T/|E|^{\frac{p}N}$,
which is invariant for cylinders of the type $Q_\rho=K_{\rho}\times(-\rho^p,0]$
and $Q_\rho\cap E_T$ as well, provided $\pl E$ is smooth enough.
In particular, Lemma~\ref{Lm:DG:1}, Lemma~\ref{Lm:DG:initial:1}, Lemma~\ref{Lm:DG:initial:2}, Lemma~\ref{Lm:4:2}
and Lemma~\ref{Lm:5:3}
can be proved in this boundary setting.

Finally, we remark that the use of De Giorgi's isoperimetric inequality (cf. \cite[Chapter I, Lemma~2.2]{DB} and \cite[Theorem 4.2.1]{Ziemer})
is permitted for extension domains, and thus in particular for $C^1$-domains. Thus the machinery used in Lemma~\ref{Lm:A:4} can be reproduced.

For the proof of Theorem~\ref{Thm:1:4} we now consider a cylinder  $Q_o=K_{\rho}(x_o)\times(t_o-\rho^{p-1},t_o]$ whose vertex $(x_o,t_o)$ is attached to $S_T$ and $\rho$ is so small that $t_o-\rho^{p-1}>0$.
According to the preceding considerations we proceed exactly as in the second proof of interior regularity in Section~\ref{S:6}. Obviously, in the present situation all cylinders have to be intersected with $E_T$. In this way, we conclude a reduction of oscillation for the lateral boundary point $(x_o,t_o)$.

\appendix

\section{On the Notion of Parabolicity}\label{Append:1}
We collect some useful lemmas regarding the notion of parabolicity for
\eqref{Eq:1:1} -- \eqref{Eq:1:2p}.
\begin{lemma}\label{Lm:A:1}
Let $u$ be a local weak sub(super)-solution to \eqref{Eq:1:1} -- \eqref{Eq:1:2p}.
Then, for any $k\in\rr$, the truncation $k\pm(u-k)_\pm$
is a local weak sub(super)-solution to \eqref{Eq:1:1} -- \eqref{Eq:1:2p}.
\end{lemma}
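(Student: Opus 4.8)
The plan is to reduce to the sub-solution case — the super-solution case is identical after replacing $(u-k)_+$, $\max\{u,k\}$ and ``$\le$'' by $(u-k)_-$, $\min\{u,k\}$ and ``$\ge$'' — and to show that $v:=k+(u-k)_+=\max\{u,k\}$ is a local weak sub-solution to \eqref{Eq:1:1} -- \eqref{Eq:1:2p}. First I would check that $v$ belongs to the class \eqref{Eq:1:3p}: truncation from below at the constant $k$ is a $1$-Lipschitz map on $L^{q+1}_{\loc}(E)$, so $t\mapsto v(\cdot,t)$ inherits continuity into $L^{q+1}_{\loc}(E)$ from $u$, while by the Sobolev chain rule $Dv=Du\,\mathbf 1_{\{u>k\}}\in L^p_{\loc}\big(0,T;L^p_{\loc}(E)\big)$. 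Next I would record the a.e.\ identities to be used: since $s\mapsto\boldsymbol{s}^q=|s|^{q-1}s$ is strictly increasing on $\rr$, one has $\boldsymbol{v}^q=\max\{\boldsymbol{u}^q,\boldsymbol{k}^q\}$, hence $\boldsymbol{v}^q-\boldsymbol{k}^q=(\boldsymbol{u}^q-\boldsymbol{k}^q)_+$; and the growth bound in \eqref{Eq:1:2p} forces $\bl{A}(x,t,u,0)=0$ for all $u$, so together with $Dv=Du\,\mathbf 1_{\{u>k\}}$ we get $\bl{A}(x,t,v,Dv)=\bl{A}(x,t,u,Du)\,\mathbf 1_{\{u>k\}}$ a.e.\ in $E_T$.

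The core step is to test the weak formulation \eqref{Eq:1:4p} for $u$ with $\z_\delta:=\z\,H_\delta\big((u-k)_+\big)$, where $H_\delta(s):=\min\{\delta^{-1}s,1\}$ for $s\ge0$ is a Lipschitz, non-decreasing regularisation of $\mathbf 1_{(0,\infty)}$ with $H_\delta(0)=0$, $\z$ ranges over the admissible non-negative test functions, and one then lets $\delta\downarrow0$. Since $\z_\delta$ carries no a priori time derivative, this has to be carried out on the Steklov-averaged (equivalently, a suitably time-regularised) version of \eqref{Eq:1:4p} and only afterwards passed to the limit. The algebraic point making the time term tractable is that, $s\mapsto\boldsymbol{s}^q$ being a bijection of $\rr$, one may write $H_\delta\big((u-k)_+\big)=\widetilde\psi_\delta(\boldsymbol{u}^q)$ with $\widetilde\psi_\delta$ non-decreasing, so that $\partial_t(\boldsymbol{u}^q)\,H_\delta\big((u-k)_+\big)=\partial_t\Psi_\delta(\boldsymbol{u}^q)$ with $\Psi_\delta(\tau):=\int_{\boldsymbol{k}^q}^{\tau}\widetilde\psi_\delta(\sigma)\,\d\sigma$; this turns the time contribution into a perfect time derivative, so after integrating by parts against $\z$ it produces boundary terms plus a term with $\z_t$. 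For the flux, the product rule gives $\bl{A}(x,t,u,Du)\cdot D\z_\delta=H_\delta\big((u-k)_+\big)\,\bl{A}\cdot D\z+\z\,H_\delta'\big((u-k)_+\big)\,\bl{A}(x,t,u,Du)\cdot Du$, and the second summand is $\ge0$ because $\z\ge0$, $H_\delta'\ge0$ and $\bl{A}(x,t,u,Du)\cdot Du\ge C_o|Du|^p\ge0$ by the coercivity in \eqref{Eq:1:2p}; for a sub-solution this term has the favourable sign, so discarding it preserves the inequality ``$\le0$''.

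It then remains to pass to the limit. Removing the regularisation uses $\boldsymbol{u}^q\in C\big(0,T;L^1_{\loc}(E)\big)$ (from $u\in C(0,T;L^{q+1}_{\loc}(E))$ and continuity of $s\mapsto\boldsymbol{s}^q$) together with the uniform Lipschitz bound on $\Psi_\delta$; then, as $\delta\downarrow0$, $\Psi_\delta(\boldsymbol{u}^q)\to(\boldsymbol{u}^q-\boldsymbol{k}^q)_+=\boldsymbol{v}^q-\boldsymbol{k}^q$ boundedly and a.e., while $H_\delta\big((u-k)_+\big)\to\mathbf 1_{\{u>k\}}$ a.e., so by dominated convergence (dominating function $|\bl{A}(x,t,u,Du)|\,|D\z|\le C_1|Du|^{p-1}|D\z|\in L^1_{\loc}$) the surviving flux term tends to $\iint\bl{A}(x,t,v,Dv)\cdot D\z\,\dx\,\dt$. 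Finally, the constant $\boldsymbol{k}^q$ occurring in $\boldsymbol{v}^q-\boldsymbol{k}^q$ produces a boundary term $\int_K\boldsymbol{k}^q\z\,\dx\big|_{t_1}^{t_2}$ that is cancelled exactly by the contribution $\iint_{K\times(t_1,t_2)}\boldsymbol{k}^q\z_t\,\dx\,\dt$, so that what survives is precisely the inequality \eqref{Eq:1:4p} with $u$ replaced by $v$.

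I expect the main obstacle to be the rigorous treatment of the time term: because $u$ need not possess a distributional time derivative, the identity $\partial_t(\boldsymbol{u}^q)\,H_\delta((u-k)_+)=\partial_t\Psi_\delta(\boldsymbol{u}^q)$, the integration by parts, and the joint passage to the limit in the Steklov parameter and in $\delta$ must all be performed at the regularised level and only then combined; one should also note that, $u$ being locally bounded, $\boldsymbol{u}^q$ and $\Psi_\delta(\boldsymbol{u}^q)$ are locally integrable even in the singular range $0<q<1$, so no integrability issue arises. An equivalent route with the same core difficulty is to set $w:=\boldsymbol{u}^q$, observe that $w$ solves a parabolic equation of $p$-Laplacian type with an admissible, possibly degenerating, weight in its structure conditions — in the spirit of the transformation used in the proof of Lemma~\ref{Lm:expansion:p} — and invoke the classical fact that $\max\{w,\boldsymbol{k}^q\}$ is a sub-solution of that equation.
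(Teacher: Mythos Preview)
Your argument is correct and is exactly the standard route for such truncation lemmas (test with $\zeta\,H_\delta((u-k)_+)$, discard the sign-favourable extra flux term via coercivity, rewrite the time contribution as a perfect derivative through a primitive $\Psi_\delta$ together with Steklov averaging, then let $\delta\downarrow0$); the paper itself does not spell out a proof but refers to \cite[Appendix~A]{BDL}, where precisely this argument is carried out for $q=p-1$ and adapts verbatim to general $q>0$. One minor point: you need not invoke local boundedness of $u$ to secure integrability of $\boldsymbol{u}^q$ and $\Psi_\delta(\boldsymbol{u}^q)$, since the membership $u\in C\big(0,T;L^{q+1}_{\loc}(E)\big)$ already gives $\boldsymbol{u}^q\in C\big(0,T;L^{(q+1)/q}_{\loc}(E)\big)\subset C\big(0,T;L^{1}_{\loc}(E)\big)$.
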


The analysis has been carried out in \cite[Appendix~A]{BDL} for $q=p-1$.
However, the same proof actually works for all $p>1$ and $q>0$ after minor changes.

In particular, when $u$ is a local weak solution,
$u_{+}$ and $u_-$ are non-negative, local weak sub-solutions to \eqref{Eq:1:1} -- \eqref{Eq:1:2p}.
By \cite[Theorem~4.1]{BHSS}, they are locally bounded and hence $u$ is also.

In order to formulate an analog of Lemma~\ref{Lm:A:1} near the lateral boundary $S_T$ for a sub(super)-solution $u$ to \eqref{Dirichlet},
consider the cylinder $Q_{R,S}=K_R(x_o)\times(t_o-S,t_o)$ whose vertex $(x_o,t_o)$ is attached to $S_T$.
Further, for a level $k$ satisfying \eqref{Eq:3:3}, we are concerned with
the following truncated extension of $u$ in $Q_{R,S}$:
\begin{equation*}
u_k^{\pm}:=\left\{
\begin{array}{cl}
k\pm(u-k)_\pm\quad&\text{ in }Q_{R,S}\cap E_T,\\[5pt]
k\quad&\text{ in }Q_{R,S}\setminus E_T.
\end{array}\right.
\end{equation*}
Moreover, the extension of $\bl{A}$ defined by
\begin{equation*}
\widetilde{\mathbf A}(x,t,u,\z):=\left\{
\begin{array}{cl}
\mathbf A(x,t,u,\z)\quad&\text{ in }Q_{R,S}\cap E_T,\\[5pt]
|\z|^{p-2}\z\quad&\text{ in }Q_{R,S}\setminus E_T
\end{array}\right.
\end{equation*}
is a Carath\'eodory function satisfying \eqref{Eq:1:2p} with structure constants $C_o$
and $C_1$ replaced by $\min\{1,C_o\}$ and $\max\{1,C_1\}$, respectively.
In this situation, the following lemma holds.
\begin{lemma}\label{Lm:A:2}
Suppose $u$ is a sub(super)-solution to \eqref{Dirichlet} with \eqref{Eq:1:2p}
and the level $k$ satisfies \eqref{Eq:3:3}.
Let $u_k^{\pm}$ be defined as above.
Then $u^{\pm}_k$ is a local weak sub(super)-solution to \eqref{Eq:1:1} with $\widetilde{\mathbf A}$
in $Q_{R,S}$.
\end{lemma}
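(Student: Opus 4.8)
The plan is to deduce the assertion from the interior statement of Lemma~\ref{Lm:A:1} and then to enlarge the class of admissible test functions across the portion of the lateral boundary $S_T$ contained in $Q_{R,S}$. To begin with, I would record that $\widetilde{\mathbf A}$ inherits the structure conditions: it coincides with $\mathbf A$ on the fixed set $Q_{R,S}\cap E_T$ and equals the $p$-Laplacian field $|\zeta|^{p-2}\zeta$ on the complementary fixed set, where $|\zeta|^{p-2}\zeta\cdot\zeta=|\zeta|^p$ and $\big||\zeta|^{p-2}\zeta\big|=|\zeta|^{p-1}$, so \eqref{Eq:1:2p} holds with $C_o,C_1$ replaced by $\min\{1,C_o\},\max\{1,C_1\}$; measurability in $(x,t)$ and continuity in $(u,\zeta)$ are immediate, since the two Carath\'eodory pieces are glued along the measurable set $E_T$. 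Hence the only content of the lemma is the differential inequality.

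Next I would exploit the level restriction \eqref{Eq:3:3}. Combined with the trace condition $(u-g)_\pm\in L^p(0,T;W^{1,p}_o(E))$ built into the notion of solution of \eqref{Dirichlet}, the pointwise comparison $0\le(u-k)_\pm\le(u-g)_\pm$ and the lattice property of $W^{1,p}_o$ give $(u-k)_\pm=\pm\big(u_k^\pm-k\big)\in L^p_{\loc}(0,T;W^{1,p}_o(E))$; in particular $u_k^\pm-k$ has vanishing trace on $S_T$. On $Q_{R,S}\setminus E_T$ the extension is constant, $u_k^\pm\equiv k$, so $Du_k^\pm\equiv0$ and $\widetilde{\mathbf A}(x,t,u_k^\pm,Du_k^\pm)\equiv0$ there, while $|u_k^\pm|^{q-1}u_k^\pm\equiv\boldsymbol{k}^q$ is independent of time; by the fundamental theorem of calculus the contribution of $Q_{R,S}\setminus E_T$ to the weak formulation for $u_k^\pm$ therefore cancels identically. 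It thus suffices to verify \eqref{Eq:1:4p} with all integrals restricted to $Q_{R,S}\cap E_T$, now allowing test functions that need not vanish on $S_T$.

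For that last point I would follow \cite[Appendix~A]{BDL}. Lemma~\ref{Lm:A:1} already says $u_k^\pm$ is a local weak sub(super)-solution to \eqref{Eq:1:1} inside $E_T$, and on $\{(u-k)_\pm=0\}$ one has $Du_k^\pm=0$, whence $\mathbf A(x,t,u_k^\pm,Du_k^\pm)=0$ by the growth bound in \eqref{Eq:1:2p}; so the diffusion term is switched off wherever the truncation is trivial. To upgrade from test functions compactly supported in $E$ to test functions supported in $K_R(x_o)$ that are nonzero on $S_T$, I would mollify in time and test the inequality for $u$ with $\zeta\,\Phi_\delta\big((u-k)_\pm\big)$, $\Phi_\delta(s):=\min\{s/\delta,1\}$ --- an admissible Dirichlet test function precisely because $(u-k)_\pm\in W^{1,p}_o(E)$ forces it to vanish in the trace sense on $\partial E$ --- then let $\delta\downarrow0$ and remove the time mollification, keeping track of the constant term $\boldsymbol{k}^q$ produced on $\{(u-k)_\pm=0\}$. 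As in \cite{BDL}, replacing $\boldsymbol{u}^{p-1}$ by $\boldsymbol{u}^q$ in the parabolic term costs nothing here, since only the diffusion term requires care. The main obstacle is exactly this passage across $S_T$: it must be arranged so that no interface contribution along $S_T$ survives, and this works only because \eqref{Eq:3:3} guarantees that the trace of $(u-k)_\pm$ vanishes there and because the time term is handled through Steklov averages rather than differentiated directly.
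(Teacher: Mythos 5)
Your proposal is correct and follows the route the paper intends: the paper states Lemma~\ref{Lm:A:2} without proof, deferring to the analysis of \cite[Appendix~A]{BDL}, which is precisely the scheme you describe --- extend by the constant $k$ (so that the time term and the flux vanish identically on $Q_{R,S}\setminus E_T$), and upgrade the admissible test functions across $S_T$ by testing with $\zeta\,\Phi_\delta\big((u-k)_\pm\big)$ after Steklov averaging, which is legitimate because \eqref{Eq:3:3} together with $(u-g)_\pm\in L^p(0,T;W^{1,p}_o(E))$ forces the trace of $(u-k)_\pm$ to vanish. One cosmetic point: since \eqref{Eq:3:3} only controls $g$ on $Q_{R,S}\cap S_T$, what you actually obtain is the vanishing of the trace of $(u-k)_\pm$ on $\partial E\cap K_R(x_o)$ rather than global membership in $W^{1,p}_o(E)$, but that local statement is all the argument needs, since the test functions are supported in $K_R(x_o)$.
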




\begin{thebibliography}{99}
\bibitem{BDL}
V. B\"ogelein, F. Duzaar and N. Liao, {\it On the H\"older regularity of signed solutions to a doubly nonlinear equation}, 
J. Funct. Anal., {\bf281}(9), (2021), 58~pp. 
\bibitem{BHSS} V. B\"ogelein, A. Heran, L. Sch\"atzler and T. Singer,
{\it Harnack's inequality for doubly nonlinear equations of slow diffusion type}, Calc. Var. Partial Differential Equations, DOI: 10.1007/s00526-021-02044-z.
\bibitem{DB} E. DiBenedetto, ``Degenerate Parabolic 
Equations", Universitext, Springer-Verlag, New York, 1993.  
\bibitem{DB86} E. DiBenedetto, {\it On the local behaviour of solutions of degenerate 
parabolic equations with measurable coefficients},
Ann. Scuola Norm. Sup. Pisa Cl. Sci. (4), {\bf13}(3), (1986), 487--535.
\bibitem{DBGV-acta} E. DiBenedetto, U. Gianazza and V. Vespri, 
{\it Harnack estimates for quasi-linear degenerate parabolic differential equations}, 
Acta Math., {\bf 200}(2), (2008), 181--209. 
\bibitem{DBGV-mono} E. DiBenedetto, U. Gianazza and V. Vespri, 
``Harnack's Inequality for Degenerate and Singular Parabolic 
Equations", Springer Monographs in Mathematics, Springer-Verlag, 
New York, 2012.
\bibitem{GSV} U. Gianazza, M. Surnachev and V. Vespri, {\it On a new 
proof of H\"older continuity of solutions of $p$-Laplace 
type parabolic equations},  Adv. Calc. Var., {\bf3}(3), (2010), 263--278.
\bibitem{Henriques-13} 
E. Henriques and  R. Laleoglu, {\it Local H\"older continuity for some doubly nonlinear parabolic equations in measure spaces},
 Nonlinear Anal., {\bf79}, (2013), 156--175.
\bibitem{Ivanov-89}
A.V. Ivanov,  {\it H\"older estimates for quasilinear doubly degenerate parabolic equations}, 
(Russian) Zap. Nauchn. Sem. Leningrad. Otdel. Mat. Inst. Steklov. (LOMI) 171 (1989), Kraev. Zadachi Mat. Fiz. i Smezh. Voprosy Teor. Funktsiĭ. 20, 70--105, 185; translation in J. Soviet Math. 56 (1991), no. 2, 2320--2347.
\bibitem{Trud1} T. Kuusi, J. Siljander and J.M. Urbano,
{\it Local H\"older continuity for doubly nonlinear parabolic equations}, 
Indiana Univ. Math. J., {\bf61}(1), (2012), 399--430.
\bibitem{Liao} N. Liao,
{\it A unified approach to the H\"older regularity of solutions to degenerate and singular parabolic equations},
J. Differential Equations, {\bf268}(10), (2020), 5704--5750.
\bibitem{Liao-21} N. Liao, {\it Regularity of weak supersolutions to elliptic and parabolic equations: Lower semicontinuity
and pointwise behavior}, J. Math. Pures Appl. (9), {\bf 147}, (2021), 179--204.
\bibitem{PV} M.M. Porzio and V. Vespri, {\it H\"older estimates for local solutions of some doubly nonlinear degenerate parabolic equations},
 J. Differential Equations, {\bf103}(1), (1993),  146--178.
\bibitem{Urbano-08} J.M. Urbano,  ``The method of intrinsic scaling, 
A systematic approach to regularity for degenerate and singular PDEs'', 
Lecture Notes in Mathematics, 1930. Springer-Verlag, Berlin, 2008.
\bibitem{Zhou-94} S.-L. Zhou, {\it Parabolic $Q$-minima and their application},
J. Partial Differential Equations, {\bf7}(4), (1994),  289--322.

\bibitem{Ziemer}
W. P. Ziemer, 
``Weakly differentiable functions. Sobolev spaces and functions of bounded variation", Graduate Texts in Mathematics, 120. Springer-Verlag, New York, 1989.


\end{thebibliography}
\end{document}